\newtheorem{theorem}{Theorem}[section]
\newtheorem{lemma}[theorem]{Lemma}
\newtheorem{proposition}[theorem]{Proposition}
\newtheorem{corol}[theorem]{Corollary}
\newtheorem{definition}[theorem]{Definition}
\theoremstyle{definition}
\newtheorem{remark}[theorem]{Remark}
\newcommand{\si}{\mathbb{S}}
\newcommand{\rr}{\mathbb{R} }
\newcommand{\cc}{\mathbb{C}}
\newcommand{\hh}{\mathbb{H}}
\newcommand{\nn}{\mathbb{N}}
\newcommand{\meas}{\mathcal{M}}
\newcommand{\Z}{\mathcal{Z}}
\newcommand{\U}{\mathcal{U}}
\newcommand{\borel}{\mathbf{B}}
\renewcommand{\Re}{\mathrm{Re}}
\renewcommand{\Im}{\mathrm{Im}}
\newcommand{\ran}{\mathrm{ran}}
\newcommand{\closOP}{\mathcal{K}}
\newcommand{\boundOP}{\mathcal{B}}
\newcommand{\mon}{\mathcal{SR}}
\newcommand{\bimon}{\mathcal{N}}
\newcommand{\indi}{\mathds{1}}
\newcommand{\lap}{\mathcal{L}}
\newcommand{\id}{\mathcal{I} }
\newcommand{\R}{\mathbb{R}}
\newcommand{\C}{\mathbb{C}}
\renewcommand{\S}{\mathbb{S}}
\newcommand{\Res}{\mathrm{Res}}
\newcommand{\dom}{\mathcal{D} }
\title{\bf  Functions of the infinitesimal generator of a \\
strongly continuous  quaternionic group}
\author{Daniel Alpay\\
Department of Mathematics\\
Ben–Gurion University of the Negev\\
Beer-Sheva 84105 Israel \\
dany@math.bgu.ac.il
\\
\and
Fabrizio Colombo\\Politecnico di
Milano\\Dipartimento di Matematica\\Via E. Bonardi, 9\\20133 Milano,
Italy\\fabrizio.colombo@polimi.it
\\
\and
Jonathan Gantner \\
Politecnico di Milano\\
Dipartimento di Matematica\\
Via E. Bonardi, 9\\
20133 Milano,
Italy\\
jonathan.gantner@polimi.it
\\
\and
David P. Kimsey\\
(DPK)Department of Mathematics\\
Ben-Gurion University of the Negev\\
Beer-Sheva 84105 Israel\\
dpkimsey@gmail.com
}
\date{ }
\begin{document}
\maketitle
\begin{abstract}
The analogue of the Riesz-Dunford functional calculus has been introduced and studied recently as well as the theory of semigroups and groups of linear quaternionic operators.
In this paper we suppose that $T$ is the infinitesimal generator of a strongly continuous group of operators
$(\Z_T(t))_{t \in \rr}$ and we show how we can define bounded operators $f(T)$, where $f$ belongs to a class of functions which is larger than the class of slice regular functions,
using the quaternionic Laplace-Stieltjes  transform.
This class will include functions that are slice regular on the $S$-spectrum of $T$ but not necessarily at infinity.
Moreover, we establish the relation of $f(T)$ with the quaternionic functional calculus and we study the problem of finding the inverse of $f(T)$.

 \end{abstract}
\noindent AMS Classification: 47A10, 47A60.

\noindent {\em Key words}: Quaternionic infinitesimal generators, S-resolvent operator, S-spectrum, quaternionic  Laplace-Stieltjes  transform, quaternionic functional calculus, functions of the infinitesimal generator.

\section{Introduction}

In this paper we study the quaternionic counterpart following problems that naturally arise for groups or semigroups of
operators in complex Banach spaces.
With the recently introduced notion of $S$-spectrum $\sigma_S(T)$ for a quaternionic linear operators $T$, it was possible to develop the quaternionic functional calculus (also called $S$-functional calculus) which is the quaternionic version of the Riesz-Dunford functional calculus, see \cite{acgs, JGA, CLOSED} and the book \cite{MR2752913} (see \cite{ds, rudin} for the classical case).
Moreover, the slice continuous functional calculus for quaternionic normal linear operators on a Hilbert space has been developed in \cite{GMP} using this new notion of spectrum.
The spectral theorem (see \cite{ds2} for the classical case) for quaternionic linear operators has been recently proved in \cite{ack, acks2, spectcomp}.
In the literature there were several attempts to prove the spectral theorem but the notion of spectrum was not made clear, see \cite{sc, Viswanath}. In \cite{fp} the authors prove the spectral theorem  for a quaternionic matrix $M$
using the notion of right spectrum $\sigma_R(M)$ which turned out to be equal to the $S$-spectrum $\sigma_S(M)$.

Quaternionic operators are not just interesting from a mathematical point of view,
they are important in the formulation of quantum mechanics (Q.M.). In fact, it was proved by
G. Birkhoff and  J. von Neumann \cite {BvN}, that there are essentially
just two possible ways to formulate Q.M.: the well known one using complex numbers and also one using quaternions.
The second description of Q.M. has been investigated by several authors, see \cite{adler, 12, 14, 21}, but the correct notion of spectrum did not seem to be known.
In the past,  the notion of right spectrum $\sigma_R(T)$ of a quaternionic linear operator $T$
was used successfully in some cases. This is because
$\sigma_R(T)$ coincides with $S$-point spectrum.

Classical results on groups and semigroups of linear operators (see
\cite{EngelNagel, Hille, Kantorovitz, Lunardi})
have been extended
to the quaternionic setting in recent papers:
it has been shown that the Hille-Yosida theorem holds, see \cite{MR2803786},
and it has been studied the problem of generation by perturbations of the quaternionic infinitesimal generator
in \cite{perturbation}.
For semigroups over real alternative *-algebras generation theorems and spherical sectorial operators have been studied in \cite{GR}.

We now recall some facts in the classical case, for more details see  \cite{ds}.
The Riesz-Dunford functional calculus can be extended to unbounded closed operators
$A: \mathcal{D}(A)\subset X\to X$, with non void resolvent set $\rho(A)$, where $X$ is a complex Banach space, see p. 599 in \cite{ds}.
Precisely, for each  function $f$ holomorphic on the spectrum $\sigma(A)$ and at infinity, we define the bounded linear operator
$f[A]$ as follows: consider the complex sphere $\mathbf{K}$ and the homeomorphism $\Phi:\mathbf{K}\to \mathbf{K}$ defined by
$\mu=\Phi(\lambda)=(\lambda-\mu)^{-1},\ \ \ \Phi(\infty)=0,\ \ \Phi(\mu)=\infty$ and the relation $\varphi(\mu):=f(\Phi^{-1}(\mu))$.
Let   $\mathcal{I}_X$ be the identity operator on $X$, it can be shown that the bounded operator
$$
f[A]:=\varphi((A-\mu \mathcal{I}_X)^{-1}), \ \ \ {\rm for}\ \ \ \ \mu\in \rho(A)
$$
is well defined, since it does not depend on  $\mu\in \rho(A)$, and  can be represented as
$$
f[A]=f(\infty)\mathcal{I}_X+\int_\Gamma (\lambda \mathcal{I}_X-A)^{-1} f(\lambda) d\lambda
$$
where $f(\infty):=\lim_{\lambda\to \infty}f(\lambda)$ and
$\Gamma$ is a suitable curve that surrounds the spectrum.

When $A$ is the infinitesimal generator of a strongly continuous group $U_A(t)$ for $t\in \mathbb{R}$, using the
bilateral Laplace-Stieltjes  transform it is possible to define the bounded linear operator $f(A)$ 
 on a larger class of functions.
In fact if $\alpha$ denotes a finite complex-valued measure defined on $\mathbb{R}$ so that the integral $\int_{\mathbb{R}} e^{(\omega+\varepsilon)t} d|\alpha|(t)$ is finite,  here $|\alpha|$ is the total variation of the measure, we can define the
bilateral Laplace-Stieltjes  transform of $\alpha$ as
$$
f(\lambda)=\int_{\mathbb{R}} e^{-t\lambda } d\alpha(t), \ \ \ -(\omega+\varepsilon)< \Re(\lambda)<\omega+\varepsilon.
$$
It turns out that the operator
$$
f(A)x:=\int_{\mathbb{R}} U_A(-t)x \,d\alpha(t),\ \ \ x\in X
$$
is well defined and it is bounded.
When the function  $f$ is holomorphic also at the infinity then the operator $f(A)$, define by the
Laplace-Stieltjes transform, and the one defined by the Riesz-Dunford functional calculus $f[A]$ are the same.

Under some conditions the inverse of the operator  $f(A)$ can be obtained using a suitable sequence of polynomials. If $p_n(\lambda)$ is such that $\lim_{n\to\infty}p_n(\lambda)f(\lambda)=1$,
then   $\lim_{n\to\infty}p_n(A)$ defines the inverse of $f(A)$.

To  extend the above results to the quaternionic setting we have to face several difficulties that are explained in the following sections.
Here we point out that in the quaternionic setting there are two resolvent operators and the notion  of
holomorphicity has to be replaced by the notion of slice hyperholomorphicity (or slice regularity)
which is recalled in the next section for the sake of convenience. For more details see the books
 \cite{MR2752913, GSSb} and the paper \cite{GP} for a different approach.

Before we explain our results in the quaternionic setting we point out another crucial fact.
We restrict to the case of bounded operators for sake of simplicity,
 but what follows holds also for unbounded operators.
The relation between  the resolvent operator $(\lambda \mathcal{I}_X-A)^{-1}$ of the  infinitesimal generator  $A:X\to X$
of a semigroup $(e^{tA})_{t \geq 0}$ is given by the Laplace transform
$$
(\lambda \mathcal{I}_X-A)^{-1}=\int_0^\infty e^{-t\lambda} e^{tA}\, dt,
$$
 for ${\rm Re}(\lambda)$ sufficiently large. This important relation  holds also in the quaternionic setting, but
 in this case we have two resolvent operators.
 Precisely, we define the $S$-spectrum of the bounded quaternionic linear operator $T$ as
 $$
 \sigma_S(T)=\{s\in \mathbb{H} \ : \   T^2-2s_0T+|s|^2\mathcal{I} \  \ {\rm is \ not\ invertible\ in} \ \mathcal{B}(V) \}
 $$
 where $\mathcal{B}(V)$ denotes the space of all bounded linear operators on a bilateral quaternionic Banach space $V$ while
  $s_0$ and $|s|$ are the real part and the modulus of the quaternion $s=s_0+s_1i+s_2j+s_3k$, respectively.
 The $S$-resolvent set $\rho_S(T)$ is defined by
$
\rho_S(T)=\mathbb{H}\setminus\sigma_S(T).
$
\noindent
For $s\in \rho_S(T)$ we define the left $S$-resolvent operator as
$$
S_L^{-1}(s,T):=-(T^2-2s_0 T+|s|^2\mathcal{I})^{-1}(T-\overline{s}\mathcal{I}),
$$
and the right $S$-resolvent operator as
$$
S_R^{-1}(s,T):=-(T-\overline{s}\mathcal{I})(T^2-2s_0 T+|s|^2\mathcal{I})^{-1}.
$$
For $T\in \mathcal{B}(V)$ and  $s_0 >\|T\|$ we have the relations
$$
S_R^{-1}(s,T)=\int_0^{\infty} e^{-t \,s}\, e^{t\,T}\, dt,
\ \ \ \ \ \ \ \
S_L^{-1}(s,T)=\int_0^{\infty} e^{tT}\, e^{-t s}\, dt.
$$
The above relations hold for right linear as well as for left linear quaternionic operators, but in the case of unbounded operators
the definitions of both resolvent operators have to be modified in order that they are defined on the whole quaternionic Banach space $V$.
Due to technical reasons, explained in Remark \ref{RKREASON}, we will only consider the right S-resolvent operator.

To give the definition and to prove the properties of the functions of the quaternionic infinitesimal generator of a strongly continuous group, it is necessary to have
all the preliminary results on quaternionic measure theory that we collect in Section \ref{sec3}.
We can now state the main results of this paper.
We suppose that $T$ is the quaternionic infinitesimal generator of the strongly continuous group $(\Z_T(t))_{t \in \rr}$ on a quaternionic Banach space $V$. For $f$
in the set of quaternionic bilateral Laplace-Stieltjes transforms of measures  with
\[
f(s)=\int_{\rr }d\mu(t)\,e^{-st }  \quad\text{for } -(\omega+\varepsilon)<\Re(s)\leq \omega+\varepsilon,
\]
where $\mu\in \mathbf{S}(T)$ (see in the sequel), we define the right linear operator $f(T)$ on $V$ by
\begin{equation}
f(T)v=\int_{\rr } d\mu(t)\, \Z_T(-t)v\qquad\text{for }v\in V.
\end{equation}
The operator $f(T)$ is bounded and under suitable conditions on $f$ and $g$ it has the property  $(fg)(T)=f(T)g(T)$.

In Theorem \ref{CompSCalc} we have proved that
if $f\in \mathbf{V}(T)$ and  $f$ is right slice regular at infinity,
 then the operator $f(T)$ defined using the Laplace transform equals the operator $f[T]$ obtained from the $S$-functional calculus.
\\
 Finally we  deduce sufficient conditions such that
 $$
 \lim_{n\to\infty}P_n(T)f(T)u=u, \ \ \text{ for  every $u\in V$}
 $$
 where $P_n$ are suitable polynomials.
 We conclude by recalling that
 there are several applications of the $S$-resolvent operators in
 Schur analysis, in particular in  the realization of Schur functions in the slice hyperholomorphic setting,
  see \cite{acs1, acs2, acs3},
and see  \cite{MR2002b:47144, adrs} for Schur analysis in the holomorphic case.

\section{Preliminary results on quaternionic operators}\label{S2}

The skew field of quaternions is defined as the real vector space $\hh = \{x=\sum_{i=0}^3\xi_ie_i: \xi_i\in\rr\}$ endowed with a multiplication such that $e_0 = 1$ is the identity, $e_i^2 = -1$ and $ e_ie_j =  - e_je_i$ for $i,j \in\{1,2,3\}$ with $i\neq j$. The real and imaginary part, the conjugate and the modulus of a quaternion are defined analogously to the case of complex numbers as
\[ \Re(x) = \xi_0,\qquad \Im(x) =\underline{x}=\sum_{i=1}^3\xi_ie_i,\qquad \overline{x} = \Re(x) - \underline{x}\qquad\text{and} \qquad|x| = \sqrt{\sum_{i=0}^3\xi_i^2}. \]
The quaternions also possess a complex structure. In order to explain it we define the set of imaginary units: We denote by $\S$ the set of all purely imaginary unit vectors, that is
\[\S:=\left \{\sum_{i=1}^3\xi_ie_i\in\hh:\sum_{i=1}^3\xi_i^2 = 1\right\}.\]
Given an element $x=\Re(x)+\underline{x}\in\hh$, we set
\[
I_x:=\begin{cases}\underline{x}\:\!/\:\!|\underline{x}| & \text{if }\underline{x}\neq0\\
\text{any element of $\S$} & \text{if } \underline{x} = 0.\end{cases}
\]
Then $x = x_0 + I_xx_1$ with $x_0 = \Re(x) = \xi_0$ and $x_1 = |\underline{x}|$. For any element $x=x_0+ I_xx_1\in\hh$, the set
$$[x]:= \{x_0 + Ix_1: I\in\S\}$$
is a 2-sphere in $\hh$, which degenerates to a single point if $\underline{x} = 0$.

For $I\in\S$, we obviously have $I^2=-1$. Hence, the vector space $\C_I = \mathbb{R}+I\mathbb{R}$ passing through $1$ and $I\in \mathbb{S}$ is isomorphic to the field of complex numbers. Moreover, if $I,J\in\S$ with $I\perp J$, then the quaternions $1$, $I$, $J$ and $IJ$ form an orthogonal basis of $\hh$ as a real vector space and so $\hh = \cc_I + \cc_IJ$. Furthermore, since  $IJ = - JI$, $z J = J\overline{z}$ for any $z\in\cc_I$. Hence, we also have $\hh = \cc_I + \cc_I J$.

\subsection{Slice regular functions}
The functional calculus considered in this paper is based on the theory of slice regular functions. We give a short introduction and state its most important results; the proofs can be found in \cite{MR2752913}.
Let $U\subset\hh$ be an open set and let
$f: U\to\hh$ be a real differentiable function. For
$I\in\mathbb{S}$ denote by $f_I$ be the restriction of $f$ to the
complex plane $\cc_I = \{x_0 + I x_1: x_0,x_1\in\rr\}$ and define the differential operator $\partial_I$ as
\[\partial _I := \frac12\left(\frac{\partial}{\partial x_0} + I\frac{\partial }{\partial x_1}\right).\]

\begin{definition}[Slice regular functions]{\rm
\label{defsmon}

The function  $f$ is said to be left slice regular if, for every
 $I\in\mathbb{S}$, it satisfies
\[
\partial_If(x) = \frac{1}{2}\left(\frac{\partial }{\partial x_0}f_I(x)+I\frac{\partial}{\partial x_1}f_I(x)\right)=0
\]
for all $x = x_0 + Ix_1\in U\cap \mathbb{C}_I$. We denote the set of left slice regular functions on  $U$  by $\mon^L(U)$.

The function $f$ is said to be right slice regular if,
for every$I\in\mathbb{S}$, it satisfies
\[ (f\partial_I)(x) =  \frac{1}{2}\left(\frac{\partial }{\partial x_0}f_I(x)+\frac{\partial}{\partial x_1}f_I(x)I\right)=0\]
for all $x= x_0 + Ix_1\in U\cap \mathbb{C}_I$.
We denote the set of right slice regular functions on $U$  by $\mon^R(U)$.}
\end{definition}

Any power series of the form $\sum_{n=0}^\infty x^n a_n$ with $a_n\in\hh$ for $n\in \mathbb{N}$ is left slice regular and any power series of the form $\sum_{n=0}^\infty b_nx^n$ with $b_n\in\hh$ is right slice regular. Conversely, at any real point, any left or right slice regular function allows a power series expansion of the respective form.

In the present paper we mainly consider right slice regular functions. For this reason we discuss in detail only the theory of right slice regular functions, although corresponding results also hold true for left slice regular functions.
\begin{definition}\label{SliceDeriv}{\rm
Let $U\subset\hh$ be open. The slice derivative of a function $f\in\mon^R(U)$ is the function defined by
\[ \partial_s f (x) := \frac12\left(\frac{\partial}{\partial x_0} f_I(x_0 + Ix_1) - \frac{\partial}{\partial x_1}f_I(x)I\right)\quad\text{for } x = x_0 + I x_1 \in U.\]}
\end{definition}
\begin{corol}
Let $U\subset\hh$ be open. If $f\in\mon^R(U)$, then $\partial_s f\in\mon^R(U)$.
\end{corol}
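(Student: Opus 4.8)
The plan is to verify the defining identity of right slice regularity for $\partial_s f$ on each complex plane $\cc_I$, $I\in\S$. Fix such an $I$ and abbreviate $\partial_0:=\partial/\partial x_0$, $\partial_1:=\partial/\partial x_1$. The first step is to simplify the slice derivative along the slice: since $f\in\mon^R(U)$, on $U\cap\cc_I$ we have $\partial_0 f_I+(\partial_1 f_I)\,I=0$, i.e. $(\partial_1 f_I)\,I=-\partial_0 f_I$, so substituting into the definition of $\partial_s f$ gives
\[
\partial_s f=\frac12\bigl(\partial_0 f_I-(\partial_1 f_I)\,I\bigr)=\frac12\bigl(\partial_0 f_I+\partial_0 f_I\bigr)=\partial_0 f_I\qquad\text{on }U\cap\cc_I .
\]
In particular $(\partial_s f)_I=\partial_0 f_I$. (As a by-product this also shows that $\partial_s f$ is well defined: at a real point $x_0$ the value $\partial_0 f_I(x_0,0)$ equals the one-variable derivative of $t\mapsto f(t)$ and does not depend on $I$.)

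Next I would apply the operator $(\,\cdot\,\partial_I)$ to $(\partial_s f)_I=\partial_0 f_I$. On $U\cap\cc_I$,
\[
\bigl((\partial_s f)_I\,\partial_I\bigr)=\frac12\Bigl(\partial_0(\partial_0 f_I)+\bigl(\partial_1(\partial_0 f_I)\bigr)I\Bigr)=\frac12\Bigl(\partial_0^2 f_I+\bigl(\partial_0\partial_1 f_I\bigr)I\Bigr)=\frac12\,\partial_0\Bigl(\partial_0 f_I+(\partial_1 f_I)\,I\Bigr)=0,
\]
where in the middle equality I interchanged the two first-order derivatives and in the last one I used right slice regularity of $f$ once more. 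Since $I\in\S$ was arbitrary, this proves $\partial_s f\in\mon^R(U)$.

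The one step that is not purely formal — and hence the main obstacle — is the interchange $\partial_1\partial_0 f_I=\partial_0\partial_1 f_I$, which requires $f_I\in C^2$, whereas $f$ is assumed only real differentiable. I would supply this from the regularity of slice regular functions on a slice: choosing $J\in\S$ with $J\perp I$ and writing $\hh=\cc_I+\cc_I J$, decompose $f_I=F_1+F_2J$ with $F_1,F_2\colon U\cap\cc_I\to\cc_I$. Using $zJ=J\overline z$ for $z\in\cc_I$, the equation $\partial_0 f_I+(\partial_1 f_I)\,I=0$ is seen to be equivalent to $\partial_0F_1+I\,\partial_1F_1=0$ together with $\partial_0F_2-I\,\partial_1F_2=0$, i.e. $F_1$ is holomorphic and $F_2$ antiholomorphic in the variable $x_0+Ix_1$; hence $F_1,F_2\in C^\infty$, so $f_I\in C^\infty(U\cap\cc_I)$ and Schwarz's theorem on mixed partials applies. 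Alternatively one may simply invoke the fact, proved in \cite{MR2752913}, that slice regular functions are real analytic along each slice.
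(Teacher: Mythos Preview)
The paper does not prove this corollary: it is stated without proof in the preliminary section, with a blanket reference to \cite{MR2752913} for the proofs of the slice-regular background results. Your argument is correct and self-contained; in fact the key identity $\partial_s f=\partial_0 f_I$ that you derive in the first step is recorded verbatim in the paper immediately after the corollary, and your regularity argument via the decomposition $f_I=F_1+F_2J$ is exactly the Splitting Lemma (Lemma~\ref{SplitLem}) with $J$ on the other side (so that one component becomes antiholomorphic rather than holomorphic, which makes no difference for smoothness).

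One small point worth tightening: by the paper's definition, membership in $\mon^R(U)$ requires $\partial_s f$ to be real differentiable on $U$ as a function of the four real coordinates, not only along each slice. Since $\partial_s f=\partial f/\partial\xi_0$ globally, this amounts to $f\in C^2(U)$ in the four variables, which your slicewise $C^\infty$ argument does not directly give. This gap is closed by the real analyticity of slice regular functions on $U$ established in \cite{MR2752913}, which you already cite as an alternative; you may want to invoke it for this step as well rather than only for the interchange of mixed partials.
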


Note that the slice derivative coincides with the partial derivative with respect to the real part since
\begin{align*}\partial_s f (x) = \frac12\left(\frac{\partial}{\partial x_0} f_I(x) - \frac{\partial}{\partial x_1}f_I(x)I\right)= \frac12\left(\frac{\partial}{\partial x_0} f_I(x) + \frac{\partial}{\partial x_0}f_I(x)\right) = \frac{\partial}{\partial x_0}f_I(x)
\end{align*}
for any right slice regular function.

\begin{lemma}
Let $\alpha \in\R$  and  $B_r(\alpha)$ be the open ball of radius $r$ centered at $\alpha$. A function $f: B_r(\alpha)\to\hh$ is right slice regular if and only if
\[f (x) = \sum_{n=0}^{\infty} \frac{1}{n!}\partial_s^nf(\alpha) \,(x-\alpha)^n . \]
\end{lemma}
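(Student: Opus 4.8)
The plan is to establish the two implications separately, and to reduce the nontrivial (``only if'') direction to the one–variable theory of holomorphic functions on a disc via the splitting $\hh=\cc_I\oplus\cc_I J$.

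For the ``if'' direction the argument is short. If $f$ equals the displayed series on $B_r(\alpha)$ then, in particular, $f$ is represented there by a power series $\sum_{n\ge0}b_n(x-\alpha)^n$ with $b_n\in\hh$, which converges on all of $B_r(\alpha)$ (so its radius of convergence is at least $r$). Since $\alpha\in\R$, the substitution $x\mapsto x-\alpha$ maps $B_r(\alpha)$ onto $B_r(0)$ and preserves every slice $\cc_I$, so up to this translation the series is of the form $\sum_n b_nx^n$, which was recalled after Definition~\ref{defsmon} to be right slice regular on its ball of convergence; hence $f\in\mon^R(B_r(\alpha))$. Differentiating the series term by term with respect to $x_0$ at $\alpha$ — legitimate by local uniform convergence, and using that $\partial_s$ coincides with $\partial/\partial x_0$ together with the corollary that $\partial_s$ preserves $\mon^R$ — then confirms $b_n=\frac1{n!}\partial_s^n f(\alpha)$.

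For the ``only if'' direction, fix $I\in\S$ and choose $J\in\S$ with $J\perp I$, so $\hh=\cc_I\oplus\cc_I J$ and $D_I:=B_r(\alpha)\cap\cc_I$ is the Euclidean disc of radius $r$ about $\alpha$. Write the restriction $f_I=F+JG$ with $F,G\colon D_I\to\cc_I$. A short computation — using $zJ=J\bar z$ for $z\in\cc_I$ and the uniqueness of the decomposition $\hh=\cc_I\oplus\cc_I J$ — turns the right slice regularity identity $(f\partial_I)(x)=0$ into the ordinary Cauchy–Riemann equations for $F$ and for $G$ separately. Since $f$ is real differentiable, $F$ and $G$ are holomorphic on $D_I$, hence admit Taylor expansions $F(z)=\sum_n\phi_n(z-\alpha)^n$, $G(z)=\sum_n\gamma_n(z-\alpha)^n$ with $\phi_n,\gamma_n\in\cc_I$, converging on $D_I$. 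Reassembling gives $f_I(z)=\sum_n(\phi_n+J\gamma_n)(z-\alpha)^n$ for $z\in D_I$, and applying $\partial_{x_0}^n$ at $z=\alpha$ (and using $\partial_s^n f=\partial_{x_0}^n f_I$ along $\cc_I$) yields $\partial_s^n f(\alpha)=n!\,(\phi_n+J\gamma_n)$, i.e. $b_n:=\frac1{n!}\partial_s^n f(\alpha)=\phi_n+J\gamma_n$.

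To finish, observe that $b_n$ does not depend on $I$, since $\partial_s^n f(\alpha)$ does not. For the fixed $I$ the series $\sum_n b_n(z-\alpha)^n$ already converges on the radius-$r$ disc $D_I$, so as a quaternionic power series it has radius of convergence at least $r$ and defines some $g\in\mon^R(B_r(\alpha))$; running the previous paragraph for an arbitrary $I'\in\S$ gives $g_{I'}=f_{I'}$ on $D_{I'}$, whence $g=f$ on $B_r(\alpha)=\bigcup_{I'\in\S}D_{I'}$, which is exactly the asserted expansion. The one genuinely delicate point is the non-commutative bookkeeping in the splitting $f_I=F+JG$: one must verify that it is the \emph{right} operator $f\partial_I$ of Definition~\ref{defsmon} (not the left one) that forces \emph{both} components to be honestly holomorphic, and that the reassembled quaternionic coefficients come out as $\phi_n+J\gamma_n$ rather than in some twisted form; once this is in place, everything else is the classical disc theory together with a root-test estimate for the radius of convergence.
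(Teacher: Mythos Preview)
The paper does not prove this lemma: it is stated in Section~\ref{S2} as one of several preliminary facts whose proofs are deferred to the monograph \cite{MR2752913}. Your argument is correct and is essentially the standard proof one finds there---namely, invoke the Splitting Lemma (Lemma~\ref{SplitLem}) to write $f_I=f_1+Jf_2$ with $f_1,f_2$ holomorphic on the disc $B_r(\alpha)\cap\cc_I$, apply the classical Taylor theorem to each component, and reassemble; the converse is immediate from the right slice regularity of power series $\sum_n b_n x^n$. Your closing caveat about the non-commutative bookkeeping is well placed: the computation you sketch (that $(f\partial_I)=0$ forces the standard Cauchy--Riemann equations on both $\cc_I$-components) is exactly the content of the Splitting Lemma, so you may simply cite it rather than redo it.
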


\begin{lemma}[Splitting Lemma]\label{SplitLem}
Let $U\subset\hh$ be open. A real differentiable function $f:U\to\hh$ is right slice regular if and only if for all $I,J\in\S$ with $I\perp J$ there exist holomorphic functions $f_1,f_2: U\cap\cc_I \to\cc_I$ such that
\[f_I (x) = f_1(x) + J f_2(x)\quad\text{ for all }x\in U\cap\cc_I.\]

\end{lemma}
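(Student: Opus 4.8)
The plan is to prove both implications at once by fixing $I\in\S$, picking an arbitrary $J\in\S$ with $I\perp J$, and showing that the defining equation $(f\partial_I)\equiv 0$ on $U\cap\cc_I$ is equivalent to the holomorphy of the two $\cc_I$-valued components of $f_I$ in the real direct sum $\hh=\cc_I\oplus J\cc_I$. Since $1,I,J,IJ$ form a real basis of $\hh$, every $q\in\hh$ has a unique expansion $q=(a+bI)+J(c+dI)$ with $a,b,c,d\in\R$ (indeed $J(c+dI)=cJ+dJI=cJ-dIJ$, which is just the coordinate form of $q$), so $\cc_I$ and $J\cc_I$ are complementary real subspaces. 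Applying this decomposition pointwise to the real differentiable function $f_I$ on the two-real-dimensional domain $U\cap\cc_I$ yields a unique pair of functions $f_1,f_2\colon U\cap\cc_I\to\cc_I$ with $f_I=f_1+Jf_2$, and since the four real coordinate functions of $f_I$ with respect to $1,I,J,IJ$ are real differentiable, so are $f_1$ and $f_2$. This step uses nothing about slice regularity; regularity will enter only through the holomorphy of $f_1$ and $f_2$.

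Next I would rewrite the right slice derivative. Writing $\partial_0=\partial/\partial x_0$ and $\partial_1=\partial/\partial x_1$, substituting $f_I=f_1+Jf_2$, and using that $\partial_0 f_k$ and $\partial_1 f_k$ take values in $\cc_I$ (hence commute with $I$) together with associativity of the quaternion product, one obtains
\[
2\,(f\partial_I)=\partial_0 f_I+(\partial_1 f_I)\,I=\bigl(\partial_0 f_1+I\,\partial_1 f_1\bigr)+J\bigl(\partial_0 f_2+I\,\partial_1 f_2\bigr),
\]
again displayed in the direct sum $\cc_I\oplus J\cc_I$. By uniqueness of that decomposition, $(f\partial_I)$ vanishes identically on $U\cap\cc_I$ if and only if $\partial_0 f_k+I\,\partial_1 f_k\equiv 0$ for $k=1,2$. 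Splitting $f_k=u_k+Iv_k$ into $I$-real and $I$-imaginary parts, each such equation is exactly the Cauchy–Riemann system $\partial_0 u_k=\partial_1 v_k$, $\partial_1 u_k=-\partial_0 v_k$, so $(f\partial_I)\equiv 0$ on $U\cap\cc_I$ if and only if $f_1$ and $f_2$ are holomorphic as maps $U\cap\cc_I\to\cc_I$.

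For the conclusion: if $f$ is right slice regular, then $(f\partial_I)\equiv 0$ on $U\cap\cc_I$ for every $I\in\S$, so by the above the components $f_1,f_2$ obtained in the first paragraph are holomorphic for every $I$ and every $J\in\S$ with $I\perp J$; conversely, if for each $I\in\S$ at least one $J\perp I$ admits a holomorphic splitting $f_I=f_1+Jf_2$, then $(f\partial_I)\equiv 0$ on $U\cap\cc_I$ for all $I$, i.e. $f\in\mon^R(U)$. I do not expect a genuine obstacle; the content is bookkeeping with a noncommutative product. The one delicate point is the term $(\partial_1 f_I)\,I$: the factor $I$ multiplies on the right while $f_2$ carries a left factor $J$, so one must keep the $\cc_I$-valued derivative wedged between $J$ and $I$ and simplify only via $zI=Iz$ for $z\in\cc_I$ and associativity, rather than commuting $J$ past $\cc_I$ (which would introduce a spurious conjugation). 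Once the identity $2\,(f\partial_I)=(\partial_0 f_1+I\partial_1 f_1)+J(\partial_0 f_2+I\partial_1 f_2)$ is in hand, the rest is the classical equivalence between the vanishing of $\overline\partial$ and the Cauchy–Riemann equations.
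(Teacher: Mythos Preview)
The paper does not actually prove this lemma; it is stated as a preliminary result with the proof deferred to \cite{MR2752913}. Your argument is correct and is essentially the standard proof: decompose $\hh=\cc_I\oplus J\cc_I$, write $f_I=f_1+Jf_2$, and observe that the right $\overline{\partial}$-operator splits componentwise as $(\partial_0 f_1+I\partial_1 f_1)+J(\partial_0 f_2+I\partial_1 f_2)$, reducing the question to the classical Cauchy--Riemann equations for $f_1$ and $f_2$.

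One small remark: in the converse direction you note that it suffices to have a holomorphic splitting for \emph{some} $J\perp I$ (rather than all), which is indeed a slight strengthening of the lemma as stated. Your handling of the noncommutativity---keeping the right factor $I$ next to the $\cc_I$-valued derivative and using $zI=Iz$ for $z\in\cc_I$ before pulling $J$ out---is exactly the care required, and your displayed identity is correct.
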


Slice regular functions possess good properties when they are defined on suitable domains which are
introduced in the following definition.

\begin{definition}[Axially symmetric slice domain]\label{axsymm}{\rm
Let $U$ be a domain in $\hh$.
We say that $U$ is a
\textnormal{slice domain} if $U \cap \mathbb{R}$ is nonempty and if $U\cap \mathbb{C}_I$ is a domain in $\mathbb{C}_I$ for all $I \in \mathbb{S}$.
We say that $U$ is
\textnormal{axially symmetric} if, for all $x \in U$, the
$2$-sphere $[x]=x_0 + \S x_1 $ is contained in $U$.}
\end{definition}

\begin{theorem}[Representation Formula]\label{RepFo} Let $U\subset\hh$ be an axially symmetric slice domain. Let $I\in\S$ and set $x_I = x_0 + I x_1$ for $x = x_0 + I_xx_1\in\hh$.
If $f$ is a right slice regular function on $U$, then
\begin{align*}\label{distributionright}
f(x) & = f(x_I)(1-II_x)\frac12 + f(\overline{x_I})(1+II_x)\frac12\\
&=\frac{1}{2}\big[   f(x_I)+f(\overline{x_I})\big] +\frac{1}{2}\big[f(\overline{x_I})- f(x_I)\big]II_x
\end{align*}
for $x\in U$. Moreover, the quantities
\begin{equation}\label{capparight}
\alpha(x_0,x_1):=\frac{1}{2}\big[   f(x_I)+f(\overline{x_I})\big] \qquad\text{and}
\qquad  \beta(x_0,x_1):=\frac{1}{2}\big[f(\overline{x_I}) - f(x_I)\big]I
\end{equation}
are independent of the imaginary unit $I\in\S$.
\end{theorem}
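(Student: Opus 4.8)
\medskip
\noindent\textbf{Outline of a proof.} The plan is to show that the right‑hand side of the asserted identity, regarded as a function of $x$, is itself right slice regular on $U$ and coincides with $f$ on the single slice $U\cap\cc_I$, and then to conclude by the identity principle for slice regular functions on an axially symmetric slice domain. Concretely, fix the imaginary unit $I$ occurring in \eqref{capparight} and set $R(x):=\alpha(x_0,x_1)+\beta(x_0,x_1)I_x$ for $x=x_0+I_xx_1\in U$; equivalently $R(x)=\frac12 f(x_I)(1-II_x)+\frac12 f(\overline{x_I})(1+II_x)$. This is well defined because $U$ is an axially symmetric slice domain, so that $x_I$ and $\overline{x_I}$ lie in $U$ whenever $x$ does.

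First I would check that $R$ is real differentiable on $U$. Off the real axis this is immediate from the real differentiability of $f$; at points of $U\cap\rr$, where $I_x$ is not defined, one uses that $\alpha$ is even and $\beta$ is odd in $x_1$, so that $\beta(x_0,x_1)I_x=\gamma(x_0,x_1)\,\underline x$ with $\gamma$ even in $x_1$, and $\alpha,\gamma$ are then smooth (indeed real analytic) functions of $(x_0,|\underline x|^2)$, whence $R$ is real analytic. The computational core is then to verify that $R$ is right slice regular. On $U\cap\cc_I$ the right slice regularity of $f$ reads $\partial_{x_1}f_I=(\partial_{x_0}f_I)\,I$; differentiating the expressions $\alpha(x_0,x_1)=\frac12\big(f(x_0+Ix_1)+f(x_0-Ix_1)\big)$ and $\beta(x_0,x_1)=\frac12\big(f(x_0-Ix_1)-f(x_0+Ix_1)\big)I$ by the chain rule and inserting that relation, one obtains the Cauchy--Riemann‑type identities $\partial_{x_0}\alpha=\partial_{x_1}\beta$ and $\partial_{x_0}\beta=-\partial_{x_1}\alpha$. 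These are exactly what is needed so that $R_K(x)=\alpha(x_0,x_1)+\beta(x_0,x_1)K$ satisfies $\partial_{x_0}R_K+(\partial_{x_1}R_K)K=0$ on $U\cap\cc_K$ for every $K\in\S$, i.e.\ $R\in\mon^R(U)$.

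Next I would observe that $R=f$ on $U\cap\cc_I$: for $x=x_0+Ix_1$ with $x_1>0$ one has $I_x=I$, $x_I=x$, $\overline{x_I}=\overline x$, so $R(x)=\frac12(f(x)+f(\overline x))+\frac12(f(\overline x)-f(x))I\cdot I=f(x)$, the cases $x_1\le0$ being analogous and trivial. Hence $R-f$ is a right slice regular function on the axially symmetric slice domain $U$ vanishing on $U\cap\cc_I$, in particular on the nonempty real interval $U\cap\rr$; by the identity principle for right slice regular functions (which follows from the Splitting Lemma~\ref{SplitLem} and the classical identity theorem for holomorphic functions, applied on each slice $U\cap\cc_K$, every one of which is a domain meeting $U\cap\rr$; see \cite{MR2752913}) we get $R\equiv f$ on $U$, which is the displayed formula. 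The independence of $\alpha$ and $\beta$ of the unit $I$ then follows at once: if $\alpha_K,\beta_K$ are built from another unit $K\in\S$, the same argument gives $\alpha_K(x_0,x_1)+\beta_K(x_0,x_1)I_x=f(x)=\alpha(x_0,x_1)+\beta(x_0,x_1)I_x$ for all $x\in U$; evaluating at the two points $x_0\pm I_xx_1$ and taking the half‑sum and the half‑difference forces $\alpha_K=\alpha$ and $(\beta_K-\beta)I_x=0$, hence $\beta_K=\beta$.

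The step I expect to be the main obstacle is the verification that $R$ is right slice regular: it hinges on the chain‑rule computation of $\partial_{x_0}\alpha,\partial_{x_1}\alpha,\partial_{x_0}\beta,\partial_{x_1}\beta$ with careful bookkeeping of the quaternionic multiplications (all coefficients kept on the correct side, since we work in the right slice regular setting), together with the mild technical point about real differentiability along $U\cap\rr$. The only other ingredient is the identity principle on axially symmetric slice domains; if one wants to keep the argument self‑contained, it can be reproved here in a few lines from the Splitting Lemma, using that each slice $U\cap\cc_K$ is a domain meeting the real segment $U\cap\rr$, so that a right slice regular function vanishing on $U\cap\cc_I$ must vanish on every slice.
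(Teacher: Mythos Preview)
The paper does not give its own proof of this theorem: it is stated in the preliminaries section, with the blanket remark that ``the proofs can be found in \cite{MR2752913}.'' So there is no in-paper argument to compare against.

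Your approach is correct and is essentially the standard ``extension plus identity principle'' proof that appears in the literature, including in \cite{MR2752913}. The computation of the Cauchy--Riemann system $\partial_{x_0}\alpha=\partial_{x_1}\beta$, $\partial_{x_0}\beta=-\partial_{x_1}\alpha$ from the right regularity condition $\partial_{x_0}f_I+(\partial_{x_1}f_I)I=0$ is right, and together with the evenness of $\alpha$ and oddness of $\beta$ in $x_1$ it makes $R_K(x_0+Kt)=\alpha(x_0,t)+\beta(x_0,t)K$ well defined and right regular on every slice. Your identity-principle step is also fine: $U\cap\cc_K$ is a domain containing the nonempty real segment $U\cap\rr$, so the Splitting Lemma plus the classical identity theorem force $R\equiv f$. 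The only cosmetic point is that when you check $R=f$ on $U\cap\cc_I$ you should also cover the case $I_x=-I$ (i.e.\ points of $\cc_I$ with negative imaginary part in the $I$-coordinate), but this is immediate from the same substitution and you correctly allude to it.
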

The Representation Formula (see Theorem \ref{RepFo}) for left slice regular functions reads as
\begin{equation}\label{RepFoL} f(x) = \frac12 (1-I_xI)f(x_I) + \frac12(1+I_xI)f(\overline{x_I}).\end{equation}

Slice regular function satisfy a version of Cauchy's integral formula with a modified kernel.
\begin{definition}{\rm
For $x\notin [s]$ we define the noncommutative right Cauchy kernel as
\[
S_R^{-1}(s,x):= -(x-\bar s)(x^2-2{\rm Re}(s)x+|s|^2)^{-1}.
\]
and the  noncommutative left Cauchy kernel as
\[S_L^{-1}(s,x):=-(x^2 -2 \Re(s)x+|s|^2)^{-1}(x-\overline s).\]}
\end{definition}

\begin{lemma} The noncommutative Cauchy kernels have the following properties:
\begin{enumerate}[(i)]
\item The right Cauchy kernel $S_R^{-1}(s,x)$  is left slice regular in the variable $s$ and right slice regular in the variable $x$.
\item It holds the identity
\[S_L^{-1}(s,x) = - S_R^{-1}(x,s).\]
\item They are the Laplace transforms of the exponential function. For $\Re(s) > \Re(x)$, it is
\[S_R^{-1}(s,x) = \int_{0}^\infty e^{-st}e^{xt}\, dt\quad \text{and}\quad S_L^{-1}(s,x) = \int_{0}^\infty e^{xt} e^{-st}\, dt\]
and for $\Re(s) < \Re(x)$, it is
\[S_R^{-1}(s,x) =- \int_{-\infty}^0 e^{-st}e^{xt}\, dt\quad \text{and}\quad S_L^{-1}(s,x) = -\int_{-\infty}^0 e^{xt} e^{-st}\, dt.\]
\end{enumerate}
\end{lemma}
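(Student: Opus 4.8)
The three items are almost independent; (i) and (ii) are of an algebraic nature -- of the type recorded in \cite{MR2752913} -- while (iii), the Laplace-transform representation, is the one that calls for a genuine argument, so the plan is to dispatch (i) and (ii) quickly and to concentrate on (iii).

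For (i) I would invoke the Splitting Lemma. Since $Q_s(x):=x^2-2\Re(s)x+|s|^2$ has real coefficients, its restriction to any $\C_I$, and that of $Q_s(x)^{-1}$ (defined for $x\notin[s]$), are $\C_I$-valued holomorphic functions of $x$, while $-(x-\overline s)$ restricts on $\C_I$ to $f_1+Jf_2$ with $f_1,f_2$ holomorphic; since in $S_R^{-1}(s,x)=-(x-\overline s)\,Q_s(x)^{-1}$ the intrinsic factor stands to the right, the restriction becomes $f_1Q_s^{-1}+J(f_2Q_s^{-1})$, again a holomorphic function plus $J$ times a holomorphic function, which gives right slice regularity in $x$. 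For left slice regularity in $s$ I would first rewrite the kernel, using (ii), as $S_R^{-1}(s,x)=\bigl(s^2-2\Re(x)s+|x|^2\bigr)^{-1}(s-\overline x)$, so that the intrinsic factor now stands to the \emph{left}, and then apply the analogous splitting for left slice regular functions.

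For (ii), after clearing the nowhere-vanishing denominators, the identity $S_L^{-1}(s,x)=-S_R^{-1}(x,s)$ becomes the polynomial identity
\[(x-\overline s)\bigl(s^2-2\Re(x)s+|x|^2\bigr)+\bigl(x^2-2\Re(s)x+|s|^2\bigr)(s-\overline x)=0,\]
which one verifies by substituting $\overline s=2\Re(s)-s$, $\overline x=2\Re(x)-x$, $|s|^2=s\overline s$, $|x|^2=x\overline x$ and expanding. Alternatively -- and perhaps more cleanly -- both sides of $S_L^{-1}(s,x)=-S_R^{-1}(x,s)$ are left slice regular in $x$ on the axially symmetric slice domain $\hh\setminus[s]$, and for $x$ real they both reduce to $(s-x)^{-1}$, so the identity principle forces equality for all $x\notin[s]$.

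For (iii) the plan is a reduction to commuting variables; I treat the right kernel, the left one being entirely analogous. The bound $\|e^{-st}e^{xt}\|\le e^{(\Re(x)-\Re(s))t}$ shows that $\int_0^\infty e^{-st}e^{xt}\,dt$ converges absolutely and locally uniformly on $\{\Re(s)>\Re(x)\}$, and $\int_{-\infty}^0 e^{-st}e^{xt}\,dt$ on $\{\Re(s)<\Re(x)\}$. Since $e^{-st}=\sum_{n\ge0}s^n\frac{(-t)^n}{n!}$ is slice regular in $s$ for each fixed $t$, and a locally uniformly convergent parameter integral of slice regular functions is again slice regular, both parameter integrals are left slice regular in $s$ on their respective half-spaces -- as is $S_R^{-1}(s,x)$, by (i). Fixing $x$, it therefore suffices, by the identity principle, to verify the two identities for \emph{real} $s$, the relevant real half-line having accumulation points. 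But for real $s$ everything commutes: $e^{-st}e^{xt}=e^{(x-s)t}$, the quaternion $x-s$ is invertible, $\|e^{(x-s)t}\|=e^{(\Re(x)-s)t}$ decays at the appropriate end, and integration yields $\int_0^\infty e^{(x-s)t}\,dt=(s-x)^{-1}$ when $s>\Re(x)$, and $-\int_{-\infty}^0 e^{(x-s)t}\,dt=(s-x)^{-1}$ when $s<\Re(x)$; on the other side, $x^2-2\Re(s)x+|s|^2=(x-s)^2$ for real $s$, so $S_R^{-1}(s,x)=-(x-s)\bigl((x-s)^2\bigr)^{-1}=(s-x)^{-1}$ too. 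I expect the one genuinely delicate point to be the justification that these parameter integrals inherit slice regularity in $s$ -- i.e.\ that locally uniform convergence lets one pass slice regularity through the integral sign -- since that is precisely what licenses the appeal to the identity principle; it follows from the Splitting Lemma together with the classical fact that locally uniform limits of holomorphic functions are holomorphic. Everything else reduces to routine verifications.
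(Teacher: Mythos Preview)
The paper does not actually prove this lemma: it is stated among the preliminaries in Section~\ref{S2}, with the blanket sentence ``the proofs can be found in \cite{MR2752913}'' covering all of the results in that subsection. So there is no proof in the paper to compare against, and your task reduces to checking that your argument is self-contained and correct --- which it is.

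Your treatment of (iii) via the identity principle is clean and is in fact the standard way to obtain these formulas: verify the identity for real $s$ where everything commutes, and then propagate by left slice regularity in $s$. The passage of slice regularity through the parameter integral is, as you say, a consequence of the Splitting Lemma plus the classical theorem on locally uniform limits of holomorphic functions; this is entirely routine.

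One small remark on the logical ordering of (i) and (ii). Your proof of the ``left slice regular in $s$'' half of (i) invokes (ii) to rewrite $S_R^{-1}(s,x)$ with the intrinsic factor on the left, while your \emph{alternative} proof of (ii) via the identity principle needs precisely that $S_R^{-1}(x,s)$ is left slice regular in its first argument --- i.e.\ the very half of (i) that you obtained from (ii). So the identity-principle route to (ii) is circular as written; only the direct polynomial verification you give first is admissible. Since you present the polynomial check as the primary argument and the identity principle only as an alternative, this is not a gap, but you should either drop the alternative or replace the appeal to (i) there by a direct splitting argument for $S_R^{-1}(x,s)$ in $x$.
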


The right Cauchy kernel is the right slice regular inverse of the function $x\mapsto s-x$. This motivates the notation $S_R^{-n}(s,x)$ for the inverse of the $n$-th slice regular power of the map $x\mapsto s-x$.  (For details on the slice regular product, we refer again to \cite{MR2752913}.)
\begin{definition}\label{SlicePow}{\rm
Let $n\in\nn$. For $x\notin[s]$ we define
\[ S_R^{-n}(s,x) = \sum_{k=0}^{n} \binom{n}{k} s^{n-k}(-x)^{k} (x^2-2\Re(s)x + |s|^2)^{-n}.\]}
\end{definition}
Note that, for $m\in\nn$, we have
\[\frac{\partial^m}{\partial s_0^m} S_R^{-1}(s,x) = (-1)^m m!\, S_R^{-(m+1)}(s,x)\]
and
\begin{equation}\label{CauchyDeriv}
\frac{\partial^m}{\partial x_0^m} S_R^{-1}(s,x) = m! \,S_R^{-(m+1)}(s,x).
\end{equation}

The noncommutative Cauchy kernels allow us to prove the slice regular version of Cauchy's integral formula, which is the starting point for the definition of the $S$-functional calculus for quaternionic linear operators.

\begin{theorem}[The Cauchy formula with slice regular kernel]
\label{Cauchygenerale}
Let $U\subset\hh$ be an axially symmetric slice domain such that $\partial (U\cap \mathbb{C}_I)$ is a finite union of
continuously differentiable Jordan curves  for every $I\in\mathbb{S}$ and  set  $ds_I=-ds I$ for $I\in \mathbb{S}$.
If $f$ is right slice regular on a set that contains $\overline{U}$,
then
\begin{equation}\label{Cauchyright}
 f(x)=\frac{1}{2 \pi}\int_{\partial (U\cap \mathbb{C}_I)}  f(s)\,ds_I\, S_R^{-1}(s,x) \qquad\text{for all }x\in U.
 \end{equation}
This integral neither depends on $U$ nor on the imaginary unit $I\in\mathbb{S}$.
\end{theorem}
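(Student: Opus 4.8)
The plan is to reduce the identity to the classical Cauchy integral formula on a single complex plane $\cc_I$ by means of the Splitting Lemma, and then to bootstrap from $\cc_I$ to all of $U$ using the Representation Formula. Fix $I\in\S$ and choose $J\in\S$ with $I\perp J$. By Lemma \ref{SplitLem} we may write $f_I=f_1+Jf_2$ on $U\cap\cc_I$ with $f_1,f_2$ holomorphic in the usual sense and $\cc_I$-valued. I would first settle the case $x\in U\cap\cc_I$: there $s$ and $x$ commute, so $x^2-2\Re(s)x+|s|^2=(x-s)(x-\overline s)$ and hence $S_R^{-1}(s,x)=(s-x)^{-1}$ is just the ordinary Cauchy kernel. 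Substituting $f_I=f_1+Jf_2$ and using $ds_I=-ds\,I$ together with the commutation rule $zJ=J\overline z$ for $z\in\cc_I$, the integral $\tfrac1{2\pi}\int_{\partial(U\cap\cc_I)}f(s)\,ds_I\,S_R^{-1}(s,x)$ breaks up into two $\cc_I$-valued contour integrals over $\partial(U\cap\cc_I)$, which by hypothesis is a finite union of $C^1$ Jordan curves; evaluating each by the classical Cauchy formula for $f_1$ and for $f_2$ and recombining gives $f_1(x)+Jf_2(x)=f_I(x)=f(x)$.

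For a general $x=x_0+I_xx_1\in U$, put $x_I=x_0+Ix_1$. Axial symmetry of $U$ gives $[x]\subset U$, so $x_I,\overline{x_I}\in U\cap\cc_I$ and the case already proved yields the formula for $f(x_I)$ and $f(\overline{x_I})$. The Representation Formula (Theorem \ref{RepFo}) then gives
\begin{align*}
f(x)&=f(x_I)(1-II_x)\tfrac12+f(\overline{x_I})(1+II_x)\tfrac12\\
&=\frac1{2\pi}\int_{\partial(U\cap\cc_I)}f(s)\,ds_I\,\Big[S_R^{-1}(s,x_I)(1-II_x)\tfrac12+S_R^{-1}(s,\overline{x_I})(1+II_x)\tfrac12\Big].
\end{align*}
Since $s\in\partial(U\cap\cc_I)$ while $[x]\subset U$, we have $s\notin[x]$, so $S_R^{-1}(s,x)$ is defined; moreover, as a function of $x$ it is right slice regular on the axially symmetric slice domain $\hh\setminus[s]$ (property (i) of the noncommutative Cauchy kernels), hence it too satisfies the Representation Formula in the variable $x$. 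Therefore the bracket equals $S_R^{-1}(s,x)$, and the claimed identity follows for every $x\in U$.

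Finally, the independence assertions require no further argument: the identity just established shows that the integral equals $f(x)$ for every admissible imaginary unit $I$ and every admissible domain $U$, so its value cannot depend on these choices. (Independence of $U$ can also be seen directly: after splitting, the integrand is holomorphic in $s$ away from $x_I$ and $\overline{x_I}$, so Cauchy's theorem lets one deform $\partial(U\cap\cc_I)$.) I expect the only genuinely delicate point to be the noncommutative bookkeeping in the first step — tracking the conjugations generated by $zJ=J\overline z$ in the $Jf_2$-term so that the two halves of the split integral recombine precisely to $f_I(x)$ — while everything else is a routine application of the classical Cauchy formula and the Representation Formula.
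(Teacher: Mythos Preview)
The paper does not actually prove this theorem: it is stated in the preliminaries section (Section~\ref{S2}), where the authors announce that ``the proofs can be found in \cite{MR2752913}.'' So there is no in-paper proof to compare against.

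That said, your argument is correct and is essentially the standard one from the cited monograph: reduce to $x\in U\cap\cc_I$ via the Splitting Lemma, where $S_R^{-1}(s,x)=(s-x)^{-1}$ and the classical Cauchy formula applies to each component $f_1,f_2$, then extend to general $x\in U$ by applying the Representation Formula simultaneously to $f$ and to $S_R^{-1}(s,\cdot)$ in the variable $x$. One small remark: the ``delicate bookkeeping'' you flag is actually tamer than you suggest, because with the splitting $f_I=f_1+Jf_2$ (the $J$ on the \emph{left}) and all of $f_2(s),\,ds_I,\,(s-x)^{-1}$ lying in $\cc_I$, the factor $J$ simply pulls out to the left with no conjugation needed; the relation $zJ=J\overline z$ would only intervene had you used the decomposition $f_I=\tilde f_1+\tilde f_2 J$.
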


Finally, we introduced an important subclass of slice regular functions.
\begin{definition}{\rm
Let $U\subset\hh$ be an axially symmetric slice domain. A function $f\in\mon^R(U)$ is called intrinsic if $f(\overline{x}) = \overline{f(x)}$ for all $x\in U$. We denote the set of all intrinsic functions by $ \bimon(U) $.}
\end{definition}
The class $\bimon(U)$ plays a privileged role within the set of slice regular functions.  It contains all power series with real coefficients and any function that belongs to it is both left and right slice regular. We give two equivalent characterizations of intrinsic functions.
\begin{corol}Let $U\subset\hh$ be an axially symmetric slice domain and let $f\in \mon^R(U)$.
\begin{enumerate}[(i)]
\item The function $f$ belongs to $\bimon(U)$ if and only if $f(U\cap\cc_I)\subset\cc_I$ for all $I\in\S$.
\item Write $f(x)$ as $f(x) = \alpha(x_0,x_1)+ \beta(x_0,x_1)I_x$ according to Theorem~\ref{RepFo}. Then $f$ belongs to $\bimon(U)$ if and only if $\alpha$ and $\beta$ are real-valued.
\end{enumerate}
\end{corol}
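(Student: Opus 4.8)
The whole statement is a direct consequence of the Representation Formula (Theorem~\ref{RepFo}), and the plan is to reduce everything to the behaviour of the coefficients $\alpha(x_0,x_1),\beta(x_0,x_1)$ from \eqref{capparight}. First I would record the two elementary identities that drive the argument: writing $x=x_0+I_xx_1\in U$ with $x_1\geq 0$, so that $f(x)=\alpha(x_0,x_1)+\beta(x_0,x_1)I_x$, one has $\overline x=x_0+I_{\overline x}x_1$ with $I_{\overline x}=-I_x$, hence Theorem~\ref{RepFo} applied at $\overline x$ gives $f(\overline x)=\alpha(x_0,x_1)-\beta(x_0,x_1)I_x$; on the other hand $\overline{f(x)}=\overline{\alpha(x_0,x_1)}-I_x\,\overline{\beta(x_0,x_1)}$, since conjugation is an anti-homomorphism and $\overline{I_x}=-I_x$. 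Thus membership of $f$ in $\bimon(U)$ is \emph{equivalent} to the identity $\alpha-\beta I_x=\overline\alpha-I_x\overline\beta$ holding at every point of $U$.

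From here the equivalence with (ii) comes out quickly. If $\alpha,\beta$ are real-valued the displayed identity is trivial, so $f\in\bimon(U)$. For the converse I would exploit the fact, part of Theorem~\ref{RepFo}, that $\alpha(x_0,x_1)$ and $\beta(x_0,x_1)$ do \emph{not} depend on $I\in\S$; together with axial symmetry of $U$ (which guarantees $x_0+Ix_1\in U$ for all $I\in\S$ once it holds for one $I$) this upgrades $\alpha-\beta I_x=\overline\alpha-I_x\overline\beta$ to $\alpha-\overline\alpha=\beta I-I\overline\beta$ for \emph{every} $I\in\S$. A one-line quaternion computation shows that the right-hand side equals $\Im(\beta)I+I\Im(\beta)$, which is a real scalar, whereas $\alpha-\overline\alpha=2\Im(\alpha)$ is purely imaginary; hence both sides vanish, and $\Im(\beta)I+I\Im(\beta)=0$ for all $I\in\S$ forces $\Im(\beta)=0$. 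So $\alpha,\beta$ are real-valued, which is (ii). The real slice $x_1=0$, where $\beta$ drops out of Theorem~\ref{RepFo}, is handled separately and trivially, since there $f(x_0)=\overline{f(x_0)}\in\R=\alpha(x_0,0)$ and $\beta(x_0,0)=0$.

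For the equivalence of (ii) with (i): the implication (ii)$\Rightarrow$(i) is immediate, because $f(x)=\alpha(x_0,|x_1|)+\beta(x_0,|x_1|)I_x\in\R+\R I=\cc_I$ for $x=x_0+Ix_1\in U\cap\cc_I$. For (i)$\Rightarrow$(ii) I would again free up the imaginary unit: fix parameters $(x_0,x_1)$ with $x_1>0$ in the domain; for every $I\in\S$ the points $x_I=x_0+Ix_1$ and $\overline{x_I}$ lie in $U\cap\cc_I$ (axial symmetry, and $\overline{x_I}\in[x_I]$), so by hypothesis $f(x_I),f(\overline{x_I})\in\cc_I$, and reading off Theorem~\ref{RepFo} gives $\alpha(x_0,x_1)=\tfrac12[f(x_I)+f(\overline{x_I})]\in\cc_I$ and $\beta(x_0,x_1)=\tfrac12[f(\overline{x_I})-f(x_I)]I\in\cc_I$. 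Since this holds for all $I$, taking $I,J\in\S$ with $I\perp J$ and using that $1,I,J,IJ$ is an $\R$-basis of $\hh$ forces $\alpha(x_0,x_1),\beta(x_0,x_1)\in\cc_I\cap\cc_J=\R$; the same intersection argument applied to $f(x_0)$ settles the case $x_1=0$. That is exactly (ii).

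No single step is genuinely hard; the point that requires care, and the only place where the full strength of the hypotheses is used, is precisely the passage from a statement about one complex plane $\cc_I$ to a statement about all of them, via the $I$-independence of $\alpha,\beta$ in Theorem~\ref{RepFo} and the axial symmetry of $U$. It is worth keeping in mind that for a \emph{fixed} $I$ the inclusion $f(U\cap\cc_I)\subset\cc_I$ does \emph{not} imply that $f$ is intrinsic (for instance $x\mapsto x+I$), so the quantification over all of $\S$ in (i) is essential; this is what ultimately yields the simultaneous membership $\alpha,\beta\in\bigcap_{I\in\S}\cc_I=\R$. The bookkeeping around the degenerate real slice $x_1=0$ is routine.
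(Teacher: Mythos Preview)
The paper does not actually supply a proof of this corollary: it sits in the preliminary Section~\ref{S2} on slice regular functions, where the authors explicitly state that ``the proofs can be found in \cite{MR2752913}'' and then list the results without argument. So there is no in-paper proof to compare against.

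Your proof is correct and is essentially the standard one. The reduction of both (i) and (ii) to properties of the coefficients $\alpha,\beta$ from the Representation Formula is exactly the right move, and your two key observations---that $\Im(\beta)I+I\Im(\beta)=-2\langle\Im(\beta),I\rangle$ is real while $\alpha-\overline\alpha$ is purely imaginary, and that $\bigcap_{I\in\S}\cc_I=\rr$---cleanly finish the job. The remark that a single inclusion $f(U\cap\cc_I)\subset\cc_I$ is insufficient (with the example $x\mapsto x+I$) is a nice sanity check and correctly isolates where axial symmetry and the $I$-independence of $\alpha,\beta$ are genuinely used. The handling of the degenerate slice $x_1=0$ is also fine: there $\beta(x_0,0)=0$ automatically from \eqref{capparight}, and $\alpha(x_0,0)=f(x_0)$ is forced into $\rr$ by either hypothesis.
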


Finally, observe that the product of two right slice regular functions is in general not slice regular.  The special role of intrinsic functions is therefore due to the following observation.
\begin{lemma}
Let $f,g\in\mon^R(U)$. If $g$ is intrinsic then $fg$ belongs to $\mon^R(U)$.
\end{lemma}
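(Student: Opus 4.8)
The plan is to reduce the claim to the elementary fact that products of holomorphic functions are holomorphic, using the Splitting Lemma (Lemma~\ref{SplitLem}). Throughout, $fg$ denotes the \emph{pointwise} product $x\mapsto f(x)g(x)$. Since $f$ and $g$ are real differentiable, so is $fg$, so it suffices to verify, for each pair $I,J\in\S$ with $I\perp J$, that the restriction $(fg)_I$ has the form $h_1+Jh_2$ with $h_1,h_2\colon U\cap\cc_I\to\cc_I$ holomorphic. Fix such $I,J$. Applying Lemma~\ref{SplitLem} to the right slice regular function $f$, we obtain holomorphic functions $f_1,f_2\colon U\cap\cc_I\to\cc_I$ with $f_I(x)=f_1(x)+Jf_2(x)$ for all $x\in U\cap\cc_I$.

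The first genuine step is to note that intrinsicity turns $g_I$ into a bona fide $\cc_I$-valued holomorphic function. Indeed, $g\in\bimon(U)\subset\mon^R(U)$, so by the characterization of intrinsic functions recalled above, $g$ maps each slice $U\cap\cc_I$ into $\cc_I$; hence $g_I\colon U\cap\cc_I\to\cc_I$, and being the restriction of a right slice regular function it satisfies $\partial_{x_0}g_I+(\partial_{x_1}g_I)I=0$, which for a $\cc_I$-valued function is precisely the Cauchy--Riemann system, so $g_I$ is holomorphic. Now, for $x\in U\cap\cc_I$,
\[
(fg)_I(x)=f_I(x)\,g_I(x)=\big(f_1(x)+Jf_2(x)\big)\,g_I(x)=f_1(x)g_I(x)+J\big(f_2(x)g_I(x)\big),
\]
the last equality because $f_2(x)$ and $g_I(x)$ both lie in the commutative field $\cc_I$, so their product stays in $\cc_I$ and the imaginary unit $J$ remains on the left. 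As $f_1g_I$ and $f_2g_I$ are pointwise products of holomorphic $\cc_I$-valued functions on $U\cap\cc_I$, they are again holomorphic and $\cc_I$-valued. Hence $(fg)_I=f_1g_I+J(f_2g_I)$ is of the required form, and since $I,J$ were arbitrary, the converse direction of Lemma~\ref{SplitLem} gives $fg\in\mon^R(U)$.

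The argument is short, and the only delicate point --- which is the very heart of the lemma --- is the manipulation of the multiplication order in the displayed identity: moving $J$ past $g_I(x)$ is legitimate only because $g_I(x)\in\cc_I$, and this is exactly what intrinsicity of $g$ provides. For a general right slice regular $g$ the restriction $g_I$ is $\hh$-valued, hence has a nontrivial $\cc_IJ$-component in its own splitting; multiplying it by the $Jf_2$ term of $f$ then produces, via $zJ=J\overline z$, a mixed expression no longer of the form $h_1+Jh_2$ with $h_1,h_2$ holomorphic, which is the structural reason the pointwise product fails to be slice regular in general. One can also avoid the Splitting Lemma entirely and argue directly with the Leibniz rule: $(fg)\partial_I=\frac12\big[(\partial_{x_0}f_I)g_I+(\partial_{x_1}f_I)Ig_I\big]+\frac12\, f_I\big[\partial_{x_0}g_I+(\partial_{x_1}g_I)I\big]$, where we again used $g_I\in\cc_I$ to commute $g_I$ with $I$; the first bracket equals $2(f\partial_I)g_I=0$ and the second equals $2f_I(g\partial_I)=0$, so $(fg)\partial_I=0$.
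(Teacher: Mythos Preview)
Your argument is correct. The paper states this lemma in its preliminaries without proof (it is a standard fact from the theory of slice regular functions, referenced to \cite{MR2752913}), so there is no ``paper's proof'' to compare against; your Splitting-Lemma reduction is exactly the standard route, and the alternative Leibniz-rule computation you append is equally valid and makes the role of the hypothesis $g_I(U\cap\cc_I)\subset\cc_I$ just as transparent.
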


\subsection{The $S$-resolvent operator and the $S$-functional calculus}
We consider right linear operators on a two-sided quaternionic Banach space $V$.  We denote the set of all bounded right linear operators on $V$ by $\boundOP(V)$ and we define the set $\closOP (V)$ to be the set of closed right linear operators whose domain is dense in $V$. Based on the theory of slice regular functions it is possible to define a functional calculus for such operators. It is the natural generalization of the Riesz-Dunford-functional calculus for complex linear operators to the quaternionic setting; for details see again \cite{MR2752913}.

If $T$ is a closed operator with dense domain, then $T^2-2 \Re(s)T+|s|^2\id :{\cal D}(T^2)\subset V\to V$ is closed.
\begin{definition}{\rm
Let $T$ be a quaternionic  right linear operator and let $\mathcal{R}_s(T): \mathcal{D}(T^2) \to V$ be given by
$$
\mathcal{R}_s(T)x : = (T^2 - 2 {\rm Re}(s) T + |s|^2 \mathcal{I} )x, \ \ \ \  x \in \mathcal{D}(T^2).
$$
The $S$-resolvent set of $T$ is defined as follows
$$
\rho_S(T):=  \{  s\in V \ \  :\ \  \ker(\mathcal{R}_s(T))=\{0\}, \ \   \ran(\mathcal{R}_s(T)) \ \ is\ \  dense \  \ in  \ \ V  \ \ and
\ \  \mathcal{R}_s(T)^{-1} \in \mathcal{B}(V) \},
$$
where $\mathcal{R}_s(T)^{-1}: \ {\rm Ran}(\mathcal{R}_s(T))\to \mathcal{D}(T^2)$.
The $S$-spectrum of $T$ is defined as
$$
\sigma_S(T):=\mathbb{H}\setminus \rho_S(T).
$$}
\end{definition}

\begin{lemma}
The $S$-spectrum of an operator $T\in\closOP(V)$ is axially symmetric. If $T$ is bounded then $\sigma_S(T)$ is a nonempty, compact set contained in the closed ball $\overline{B_{\|T\|}(0)}$.
\end{lemma}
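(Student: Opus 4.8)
The two assertions can be treated separately: the axial symmetry is soft, while the statement for bounded $T$ reproduces, in the quaternionic language, the classical facts that the spectrum of a bounded operator is a nonempty compact subset of the disc of radius $\|T\|$.

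\emph{Axial symmetry.} The point is that $\mathcal{R}_s(T) = T^2 - 2\Re(s)\,T + |s|^2\,\mathcal{I}$ depends on $s$ only through the real number $\Re(s)$ and the modulus $|s|$. Hence if $s' \in [s]$, so that $\Re(s') = \Re(s)$ and $|s'|^2 = s_0^2 + s_1^2 = |s|^2$, then $\mathcal{R}_{s'}(T) = \mathcal{R}_s(T)$ as operators on $\mathcal{D}(T^2)$. Consequently each of the three conditions entering the definition of $\rho_S(T)$ --- triviality of $\ker\mathcal{R}_s(T)$, density of $\ran\mathcal{R}_s(T)$, boundedness of $\mathcal{R}_s(T)^{-1}$ --- holds for $s$ if and only if it holds for every point of $[s]$. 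Thus $\rho_S(T)$, and with it $\sigma_S(T)$, is a union of $2$-spheres $[s]$, i.e. axially symmetric; no boundedness of $T$ is needed here.

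\emph{Compactness.} Let now $T\in\boundOP(V)$. I first observe that in this case $\rho_S(T) = \{ s\in\hh : \mathcal{R}_s(T) \text{ is bijective in } \boundOP(V)\}$: if $\mathcal{R}_s(T)$ is injective with dense range and bounded inverse on that range, then, $\mathcal{R}_s(T)$ being bounded, the inverse extends by continuity to some $W\in\boundOP(V)$ and a routine density argument gives $\mathcal{R}_s(T)W = W\mathcal{R}_s(T) = \mathcal{I}$; the converse inclusion is trivial. Since $s\mapsto \mathcal{R}_s(T) = T^2 - 2s_0 T + (s_0^2+s_1^2+s_2^2+s_3^2)\,\mathcal{I}$ is a polynomial (hence continuous) map $\hh\to\boundOP(V)$ and the bijective elements form an open subset of $\boundOP(V)$, $\rho_S(T)$ is open, so $\sigma_S(T)$ is closed. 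To see that $\sigma_S(T)\subseteq \overline{B_{\|T\|}(0)}$ I show that $|s| > \|T\|$ forces $s\in\rho_S(T)$. The key remark is that every quaternion occurring in what follows --- $\Re(s)$, $|s|^2$, and all powers $s^{-n}$, $\overline{s}^{\,-n}$ --- lies in the commutative plane $\cc_{I_s}$, so the construction of $\mathcal{R}_s(T)^{-1}$ reduces to a one-variable power-series identity with commuting coefficients. In $\cc_{I_s}[\lambda]$ one has $\lambda^2 - 2s_0\lambda + |s|^2 = (\lambda - s)(\lambda - \overline{s})$; expanding $\big((\lambda-s)(\lambda-\overline{s})\big)^{-1}$ about $\lambda = 0$ produces the coefficients $c_n := (\overline{s}^{\,-n-1} - s^{-n-1})/(s - \overline{s})$ (read as $(n+1)s^{-n-2}$ when $s\in\R$), and writing $s = |s|e^{I_s\theta}$ one gets $|c_n|\le (n+1)|s|^{-n-2}$. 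Hence $W_s := \sum_{n=0}^\infty T^n c_n$ converges absolutely in $\boundOP(V)$ as soon as $\|T\| < |s|$, and since the $c_n$ commute with one another the same telescoping that proves the scalar identity gives $\mathcal{R}_s(T)W_s = W_s\mathcal{R}_s(T) = \mathcal{I}$. So $s\in\rho_S(T)$, $\sigma_S(T)\subseteq\overline{B_{\|T\|}(0)}$, and together with closedness this yields compactness.

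\emph{Nonemptiness and the main obstacle.} Assume, for contradiction, that $\sigma_S(T) = \emptyset$. Then $S_R^{-1}(s,T) = -(T-\overline{s}\,\mathcal{I})\mathcal{R}_s(T)^{-1}$ is defined for every $s\in\hh$; it is slice regular in $s$ on all of $\hh$ and, by its series $\sum_{n\ge0}T^n s^{-n-1}$ for $|s|$ large, tends to $0$ as $|s|\to\infty$. The quaternionic Liouville theorem for operator-valued slice regular functions then gives $S_R^{-1}(\,\cdot\,,T)\equiv 0$, so $(T-\overline{s}\,\mathcal{I})\mathcal{R}_s(T)^{-1} = 0$; as $\mathcal{R}_s(T)^{-1}$ maps onto $V$ this forces $T = \overline{s}\,\mathcal{I}$ for every $s\in\hh$, which is absurd. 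The only genuinely delicate point in the whole argument is the inclusion $\sigma_S(T)\subseteq\overline{B_{\|T\|}(0)}$: in contrast with the complex case the product $(s\,\mathcal{I} - T)(\overline{s}\,\mathcal{I} - T)$ does \emph{not} coincide with $\mathcal{R}_s(T)$, because $T$ need not commute with left multiplication by $\overline{s}$, so one cannot simply invert two commuting factors; it is precisely by passing through the exact partial-fraction coefficients $c_n$ that one recovers the sharp radius $\|T\|$ rather than a larger constant.
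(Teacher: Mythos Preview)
The paper does not actually prove this lemma: it is stated in the preliminary Section~2.2 among results whose ``proofs can be found in \cite{MR2752913}''. So there is no paper proof to compare against, and your write-up should be judged on its own merits.

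Your argument is essentially correct, but the justification at the crucial step is slightly off. You claim that ``since the $c_n$ commute with one another the same telescoping that proves the scalar identity gives $\mathcal{R}_s(T)W_s = W_s\mathcal{R}_s(T) = \mathcal{I}$''. Commutativity of the $c_n$ among themselves is irrelevant; what you need is that each $c_n$ commutes with $T$, since $T$ is only right linear and does not in general commute with left multiplication by elements of $\cc_{I_s}$. Fortunately your own formula gives this: writing $s = |s|(\cos\theta + I_s\sin\theta)$ one finds
\[
c_n \;=\; \frac{\overline{s}^{\,-n-1}-s^{-n-1}}{s-\overline{s}} \;=\; \frac{\sin\big((n+1)\theta\big)}{|s|^{\,n+2}\sin\theta}\;\in\;\rr,
\]
so the $c_n$ are \emph{real}, hence central, and the telescoping $\mathcal{R}_s(T)\sum_n c_n T^n = \sum_n c_n\big(T^{n+2}-2s_0 T^{n+1}+|s|^2 T^n\big) = \mathcal{I}$ goes through cleanly on both sides. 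You should state this explicitly; as written, a reader could object that elements of $\cc_{I_s}$ need not commute with $T$. (This also makes your closing paragraph about the obstacle slightly misleading: the passage from the scalar partial-fraction identity to the operator identity is unproblematic precisely because the $c_n$ are real.)

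Two minor points: the series expansion of the right $S$-resolvent has the scalars on the \emph{left}, $S_R^{-1}(s,T)=\sum_{n\ge 0} s^{-n-1}T^n$ (it is $S_L^{-1}$ that has them on the right); and in the nonemptiness step you should say that $\mathcal{R}_s(T)^{-1}$ is bijective (rather than ``maps onto $V$'') to conclude $T=\overline{s}\,\mathcal{I}$. Neither affects the substance.
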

\begin{definition}[The right $S$-resolvent operator]{\rm
Let $T\in \closOP (V)$. For $s\in \rho_S(T)$ we define the right $S$-resolvent operator of $T$ as
\begin{equation}\label{SRDefi}
S_R^{-1}(s,T):=-(T-\overline{s}\id )(T^2 - 2\Re(s) T + |s|^2\id)^{-1}.
\end{equation}}
\end{definition}

\begin{remark} Observe that since the operator $(T^2 - 2\Re(s)T + |s|^2\id )^{-1}: V\to \dom(T^2)$ is bounded, the operator
$S_R^{-1}(s,T) =  -(T-\overline{s}\id)(T^2 - 2\Re(s)T + |s|^2\id )^{-1}$ is bounded.
\end{remark}
\begin{remark}\label{RKREASON}
The definition of the right $S$-resolvent operator is obviously inspired by the right Cauchy-kernel of slice hyperholomorphic functions. Analogously it is possible to define the left $S$-resolvent operator $S_L^{-1}(s,T)$ which inspired by the left Cauchy-kernel. However, since we are considering right linear operators, in this paper we need to restrict ourselves to the right $S$-resolvent.
The reason for this is that, for any $v\in V$, the function $s\mapsto S_R^{-1}(s,T)v$ is left slice regular on $\rho_S(T)$ because $\partial_I S_R^{-1}(s,T)v = ( \partial_I S_R^{-1}(s,T))v = 0$.  Note that although the mapping $s\mapsto S_L^{-1}(s,T)$  is right slice regular on $\rho_S(T)$, the mapping $s\mapsto S_L^{-1}(s,T)v$ is in general not right slice regular because the vector $v$ does not necessarily commute with the operator $\partial_I$.
\end{remark}
\begin{theorem}Let $T\in\closOP(V)$.
\begin{enumerate}[(i)]
\item If $\alpha\in\rho_S(T)$ is real then $S_R^{-1}(\alpha,T) = (\alpha\id - T)^{-1}$.
\item The mapping $s\mapsto S_R^{-1}(s,T)$ is left slice regular on $\rho_S(T)$.
\item The right $S$-resolvent operator satisfies the $S$-resolvent equation
\begin{equation}\label{reseqR}
sS_R^{-1}(s,T)v-S_R^{-1}(s,T)Tv=\id v, \quad v\in \dom (T).
\end{equation}
\end{enumerate}
\end{theorem}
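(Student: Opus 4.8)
All three statements can be obtained by direct computation, organised around the bounded operator $Q_s:=\mathcal R_s(T)^{-1}:V\to\dom(T^2)$, where $\mathcal R_s(T)=T^2-2\Re(s)T+|s|^2\id$; its boundedness has been noted above, and the bookkeeping I shall use repeatedly is that $Q_s$ commutes with $T$ on $\dom(T)$ and with every real scalar, but \emph{not} with left multiplication by a non-real quaternion. I expect part (ii) to be the delicate one: it requires differentiating $s\mapsto Q_s$ rigorously even though $\mathcal R_s(T)$ is unbounded, and the noncommutativity forces one to track exactly where the imaginary unit sits in each product.

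For part (i), if $\alpha\in\rho_S(T)$ is real then $\overline\alpha=\alpha$, $\Re(\alpha)=\alpha$ and $|\alpha|^2=\alpha^2$, so on $\dom(T^2)$ the operator $\mathcal R_\alpha(T)$ equals $(\alpha\id-T)^2$ and hence $S_R^{-1}(\alpha,T)=(\alpha\id-T)Q_\alpha$. I would then check that $\alpha\id-T$ is a bijection of $\dom(T)$ onto $V$: it is injective since its square is (if $(\alpha\id-T)w=0$ with $w\in\dom(T)$ then $w\in\dom(T^2)$ and $\mathcal R_\alpha(T)w=0$), and it is onto since for $v\in V$ the vector $w:=(\alpha\id-T)Q_\alpha v$ lies in $\dom(T)$ (because $Q_\alpha v\in\dom(T^2)$) and satisfies $(\alpha\id-T)w=(\alpha\id-T)^2Q_\alpha v=v$. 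Therefore $(\alpha\id-T)^{-1}=(\alpha\id-T)Q_\alpha=S_R^{-1}(\alpha,T)$, which is bounded. This step is pure domain-chasing and should present no difficulty.

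For part (iii), let $v\in\dom(T)$ and write $S_R^{-1}(s,T)v=-TQ_sv+\overline s\,Q_sv$, with $\overline s$ acting by left multiplication. Using $Q_sTv=TQ_sv$ and $Q_sv\in\dom(T^2)$ one obtains
\[ sS_R^{-1}(s,T)v-S_R^{-1}(s,T)Tv=-sTQ_sv+|s|^2Q_sv+T^2Q_sv-\overline sTQ_sv. \]
The essential point is that $s$ and $\overline s$ enter only through the real quantities $s\overline s=|s|^2$ and $s+\overline s=2\Re(s)$, so that $-sTQ_sv-\overline sTQ_sv=-2\Re(s)\,TQ_sv$ with no commutator left over, and the right-hand side collapses to $\big(T^2-2\Re(s)T+|s|^2\id\big)Q_sv=\mathcal R_s(T)Q_sv=v$.

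For part (ii), fix $I\in\S$ and restrict to $s=s_0+Is_1\in\C_I$ with $s_0,s_1\in\R$; then $\overline s\id=s_0\id-s_1L_I$, where $L_I$ denotes left multiplication by $I$, so that $S_R^{-1}(s,T)=-TQ_s+s_0Q_s-s_1L_IQ_s$. I would first establish, from the resolvent-type identity $Q_{s'}-Q_s=Q_{s'}\big(\mathcal R_s(T)-\mathcal R_{s'}(T)\big)Q_s$ and the boundedness of $TQ_s$, that $s\mapsto Q_s$ is norm-differentiable with $\partial_{s_0}Q_s=2(T-s_0\id)Q_s^2$ and $\partial_{s_1}Q_s=-2s_1Q_s^2$. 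Substituting gives
\[ \partial_{s_0}S_R^{-1}(s,T)=-2(T-s_0\id)^2Q_s^2+Q_s-2s_1L_I(T-s_0\id)Q_s^2, \]
\[ \partial_{s_1}S_R^{-1}(s,T)=2s_1(T-s_0\id)Q_s^2-L_IQ_s+2s_1^2L_IQ_s^2, \]
and, using $L_I^2=-\id$,
\[ L_I\,\partial_{s_1}S_R^{-1}(s,T)=2s_1L_I(T-s_0\id)Q_s^2+Q_s-2s_1^2Q_s^2. \]
Adding the first and third displays, the terms $\mp2s_1L_I(T-s_0\id)Q_s^2$ cancel and, since $(T-s_0\id)^2+s_1^2\id=\mathcal R_s(T)$, what remains is $-2\mathcal R_s(T)Q_s^2+2Q_s=0$; hence $\partial_IS_R^{-1}(s,T)=\frac12\big(\partial_{s_0}+L_I\partial_{s_1}\big)S_R^{-1}(s,T)=0$ on $\rho_S(T)\cap\C_I$ for every $I\in\S$, which is the left slice regularity, and evaluating the operator identity on $v\in V$ recovers the statement of Remark~\ref{RKREASON}. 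The main obstacle is precisely this cancellation of the two genuinely noncommutative terms $2s_1L_I(T-s_0\id)Q_s^2$ (one produced by $\partial_{s_0}$, one by $L_I\partial_{s_1}$): because $L_I$ does not commute with $T$ they cannot be simplified individually, and only after they cancel does everything reduce, via $(T-s_0\id)^2+s_1^2\id=\mathcal R_s(T)$, to the commutative bounded calculus. One could alternatively try to transfer the scalar fact that $S_R^{-1}(s,x)$ is left slice regular in $s$, but the direct computation avoids convergence issues for unbounded $T$.
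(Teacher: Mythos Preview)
The paper does not prove this theorem; it is stated in Section~\ref{S2} among the preliminary results and the reader is referred to the monograph \cite{MR2752913} for proofs. Your blind proposal is therefore not comparable to a proof in the paper, but it is correct as a self-contained argument. Your handling of (i) and (iii) is the standard domain-chasing and algebra (the key in (iii) being that $s+\overline s=2\Re(s)$ is real, so no commutator survives), and your treatment of (ii) is the expected direct verification: the resolvent-type identity $Q_{s'}-Q_s=Q_{s'}(\mathcal R_s(T)-\mathcal R_{s'}(T))Q_s$ together with the boundedness of $TQ_s$ (which follows from the closed graph theorem applied to $Q_s:V\to\dom(T^2)$ with the graph norm) gives the norm-differentiability of $s\mapsto Q_s$, and the cancellation of the two genuinely noncommutative terms $\pm 2s_1L_I(T-s_0\id)Q_s^2$ then reduces everything to $(T-s_0\id)^2+s_1^2\id=\mathcal R_s(T)$. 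The one point you might make more explicit is local boundedness/continuity of $s\mapsto Q_s$ before differentiating: from $Q_{s'}(\id-(\mathcal R_s(T)-\mathcal R_{s'}(T))Q_s)=Q_s$ and $\|(\mathcal R_s(T)-\mathcal R_{s'}(T))Q_s\|\to 0$ one gets a uniform bound on $\|Q_{s'}\|$ for $s'$ near $s$, which is what justifies passing to the limit in your difference quotients.
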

In analogy with Definition~\ref{SlicePow} we define
\[ S_R^{-n}(s,T) = \sum_{k=0}^{n} \binom{n}{k} s^{n-k}(-T)^{k} (T^2-2\Re(s)T + |s|^2\id)^{-n}.\]

\begin{corol}\label{SResDeriv}
For $s\in\rho_S(T)$ we have
\[\partial_s^m S_R^{-1}(s,T) = (-1)^m m!\, S_R^{-(m+1)}(s,T),\quad\text{ for any }n\in\nn.\]
\end{corol}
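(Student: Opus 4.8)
The claim to be shown is that $\partial_s^m S_R^{-1}(s,T) = (-1)^m m!\, S_R^{-(m+1)}(s,T)$ for $s\in\rho_S(T)$. Since $S_R^{-1}(s,T)$ is left slice regular in $s$ (part (ii) of the preceding theorem), the slice derivative $\partial_s$ is well defined and, by the remark following Definition~\ref{SliceDeriv}, coincides with the partial derivative $\partial/\partial s_0$ with respect to the real part. Thus the statement is equivalent to
\[
\frac{\partial^m}{\partial s_0^m} S_R^{-1}(s,T) = (-1)^m m!\, S_R^{-(m+1)}(s,T),
\]
and it suffices to verify this identity of operator-valued functions. The plan is to prove it by induction on $m$, the case $m=0$ being trivial, and the heart of the matter being the base case $m=1$: computing $\partial/\partial s_0$ of $S_R^{-1}(s,T) = -(T-\overline s\id)(T^2 - 2\Re(s)T + |s|^2\id)^{-1}$ directly.

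For the base case I would write $\Er_s := T^2 - 2s_0 T + |s|^2\id$ (using $\Re(s)=s_0$, $|s|^2 = s_0^2 + |\underline s|^2$), so that $S_R^{-1}(s,T) = -(T-\overline s\id)\Er_s^{-1}$. Differentiating the factor $T - \overline s\id = T - s_0\id + \underline s$ with respect to $s_0$ gives $-\id$. For the factor $\Er_s^{-1}$, I use the standard identity $\partial_{s_0}\Er_s^{-1} = -\Er_s^{-1}(\partial_{s_0}\Er_s)\Er_s^{-1}$, valid since $s\mapsto\Er_s^{-1}$ is a bounded-operator-valued differentiable map on $\rho_S(T)$, and compute $\partial_{s_0}\Er_s = -2T + 2s_0\id = -2(T - s_0\id)$. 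Putting these together by the product rule yields
\[
\frac{\partial}{\partial s_0} S_R^{-1}(s,T) = \Er_s^{-1} - 2(T-\overline s\id)\Er_s^{-1}(T-s_0\id)\Er_s^{-1}.
\]
I then need to recognise the right-hand side as $-S_R^{-2}(s,T) = -\big(s^2\id - 2sT + T^2\big)\Er_s^{-2}$ (the $n=2$ instance of Definition~\ref{SlicePow}, written out). This is the one genuinely computational step: one expands $(T-\overline s\id) = T - s_0\id + \underline s$, uses that $T$ commutes with $\Er_s^{-1}$ and with $T - s_0\id$, collects the real and imaginary ($\underline s$) parts, and checks the algebraic identity $\Er_s - 2(T-\overline s\id)(T-s_0\id) = -(s^2\id - 2sT + T^2)$, which reduces to the scalar/operator identity $s^2 - 2s_0 s + |s|^2 = (s-s_0)^2 + |\underline s|^2$-type bookkeeping together with $\overline s = 2s_0 - s$; the key point making everything commute is that the only non-real quantity appearing, $\underline s$, is a scalar quaternion that does commute with $T$ inside these expressions. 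Alternatively — and this is probably the cleaner route to write up — one observes that the identity to be proved is already known for the scalar Cauchy kernel $S_R^{-1}(s,x)$ (equation~\eqref{CauchyDeriv} together with the formula $\partial_{s_0}^m S_R^{-1}(s,x) = (-1)^m m! S_R^{-(m+1)}(s,x)$ stated just before Theorem~\ref{Cauchygenerale}), and that all the manipulations are purely algebraic in the commuting quantities $T$, $s_0$, $|s|^2$, $\underline s$; hence the operator identity follows from the scalar one by the obvious substitution, since no issue of slice-regular product ordering arises ($\underline s$ commutes with $T$).

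For the inductive step, assuming $\partial_{s_0}^{m} S_R^{-1}(s,T) = (-1)^m m!\, S_R^{-(m+1)}(s,T)$, I would differentiate once more and use the base-case-type computation applied to the explicit formula for $S_R^{-(m+1)}(s,T)$ from Definition~\ref{SlicePow}; equivalently, and more economically, I invoke the scalar identity $\partial_{s_0} S_R^{-(m+1)}(s,x) = -(m+1) S_R^{-(m+2)}(s,x)$ (which is itself immediate from iterating \eqref{CauchyDeriv}) and transfer it to operators by the same commuting-substitution argument, giving $\partial_{s_0}^{m+1} S_R^{-1}(s,T) = (-1)^m m!\,\partial_{s_0} S_R^{-(m+1)}(s,T) = (-1)^{m+1}(m+1)!\,S_R^{-(m+2)}(s,T)$, which closes the induction. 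The only real obstacle is making the "transfer from scalars to operators" rigorous: one must check that every manipulation used in the scalar proof involves only additions, multiplications, and inversions of the pairwise-commuting elements $T$, $s_0\id$, $|s|^2\id$, $\underline s\,\id$ (all of which are bounded away from $\sigma_S(T)$ once $\Er_s^{-1}\in\boundOP(V)$), so that the resulting identities hold verbatim in $\boundOP(V)$; I would state this commutativity explicitly as the justification rather than re-deriving the bracketed algebra in full.
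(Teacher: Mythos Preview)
The paper does not give a proof of this corollary; it is stated immediately after the operator analogue of Definition~\ref{SlicePow} and is meant to follow formally from the corresponding scalar identity $\partial_{s_0}^m S_R^{-1}(s,x) = (-1)^m m!\, S_R^{-(m+1)}(s,x)$ (itself asserted without proof just before). Your proposal therefore supplies more detail than the paper, and its overall strategy---reduce $\partial_s$ to $\partial/\partial s_0$, compute the base case via the product rule and the derivative-of-inverse formula $\partial_{s_0}\Er_s^{-1}=-\Er_s^{-1}(\partial_{s_0}\Er_s)\Er_s^{-1}$, then induct---is exactly the route the paper has in mind.

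One point you should tighten: your justification for transferring the scalar identity to operators invokes that $T$, $s_0\id$, $|s|^2\id$, and $\underline s\,\id$ are \emph{pairwise commuting} in $\boundOP(V)$. That is not correct. Since $T$ is only assumed right linear, left multiplication by a non-real quaternion need not commute with $T$; in general $T(\underline s\, v)\neq \underline s\,(Tv)$. Fortunately the computation does not actually require this. In both the scalar kernel and the operator resolvent the quaternionic scalars $s$, $\overline s$ occur only on the \emph{left} of $x$- (resp.\ $T$-) polynomials, and the only commutations genuinely used are between $\Er_s=T^2-2s_0T+|s|^2\id$ and $T-s_0\id$, both of which are real-coefficient polynomials in $T$ and hence do commute. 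So your product-rule computation
\[
\partial_{s_0}S_R^{-1}(s,T)=\Er_s^{-1}-2(T-\overline s\id)\Er_s^{-1}(T-s_0\id)\Er_s^{-1}
=\big[\Er_s-2(T-\overline s\id)(T-s_0\id)\big]\Er_s^{-2}
\]
is valid, and the remaining identification with $-S_R^{-2}(s,T)$ is a purely algebraic identity in which $\overline s$ stays on the left throughout. Rephrase your justification accordingly (``quaternionic scalars stay on the left; $\Er_s$ commutes with real-coefficient polynomials in $T$'') rather than appealing to pairwise commutativity.
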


The definition of the $S$-resolvent operator allows us to define the $S$-functional calculus for bounded quaternionic right linear operators.
\begin{definition}{\rm
Let $T\in\closOP(V)$. An axially symmetric slice domain $U$ is called $T$-admissible if $\sigma_S(T)\subset U$ and if for any $I\in\S$ the boundary $\partial(U\cap\C_I)$ consists of the finite union of continuously differentiable Jordan curves.

We define $\mon^R_{\sigma_S(T)}$ to be the set of all functions that are right slice regular on an open set $O$ such that there exists a $T$-admissible slice domain $U$ whose closure $\overline{U}$ is contained in $O$.}
\end{definition}
\begin{definition}[$S$-functional calculus for bounded operators]{\rm
Let $T\in\boundOP(V)$, let $I\in\S$ and set $ds_I = - ds I$. We define for any $f\in\mon^R_{\sigma_S(T)}$
\begin{equation}\label{SCalcIntB}f[T] := \frac{1}{2\pi} \int_{\partial(U\cap\C_I) }f(s)\,ds_I\, S_R^{-1}(s,T), \end{equation}
where this integral is indepentend of the choice of the imaginary unit $I \in\S$ and of the $T$-admissible slice domain $U$.}
\end{definition}

 We say that a function $f$ is right slice regular at infinity if it is right slice regular on the set ${\{s\in\hh: r<|s|\}}$ for some $r>0$ and the limit $\lim_{s\to\infty} f(s)$ exists in $\hh$. In this case we define
\[f(\infty) := \lim_{s\to\infty}f(s).\]
\begin{definition}{\rm
Let $T\in\closOP(T)$. We denote by $\mon^R_{\sigma_S(T)\cup\{\infty\}}$ the set of all functions $f\in\mon^R_{\sigma_S(T)}$ that are right slice regular at infinity.}
\end{definition}

As in the complex case the functional calculus for unbounded operators is defined using a transformation of the unbounded operator into a bounded one.
For $\alpha\in\rr$ we consider the function $\Phi_\alpha:\hh\cup\{\infty\}\to\hh\cup\{\infty\}$ defined by $\Phi_\alpha(s) = (s-\alpha)^{-1}$ for $s\in\hh\setminus\{\alpha\}$, $\phi(\alpha) = \infty$ and $\phi(\infty) = 0$.
\begin{definition}{\rm
Let $T\in\closOP(V)$ be such that $\rho_S(T)\cap\rr\neq\emptyset$, let $\alpha\in \rho_S(T)\cap\rr$ and set $A = (T-\alpha\id)^{-1}$. For any $f\in \mon^R_{\sigma_S(T)\cup\{\infty\}}$ we define
\[ f[T] := (f\circ\Phi_{\alpha}^{-1})(A).\]}
\end{definition}
This definition is independent of the choice of $\alpha\in\rho_s(T)\cap\rr$. Moreover a integral representation corresponding to the one in \eqref{SCalcIntB} holds true as the next theorem shows.
\begin{theorem}
Let $ T\in\closOP(V)$ with $\rho_s(T)\cap\R\neq\emptyset$ and let $f\in\mon^R_{\sigma_S(T)\cup\{\infty\}}$. If $U$ is a $T$-admissible slice domain such that $f$ is slice regular on an open superset of $\overline{U}$ then
\[f[T] = f(\infty)\id + \frac1{2\pi}\int_{\partial(U\cap\C_I)} f(s)\,ds_I\,S_R^{-1}(s,T),\]
where $ds_I = - ds I $ and the integral does not depend on the choice of the imaginary unit $I\in\S$.
\end{theorem}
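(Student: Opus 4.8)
The plan is to establish the integral representation by reducing it to the already-known integral formula for the bounded operator $A = (T-\alpha\id)^{-1}$ via the definition $f[T] = (f\circ\Phi_\alpha^{-1})(A)$, and then carefully transporting the contour integral back through the change of variables $s = \Phi_\alpha(p) = (p-\alpha)^{-1}$. First I would fix $\alpha\in\rho_S(T)\cap\R$ and set $g = f\circ\Phi_\alpha^{-1}$, so that $g\in\mon^R_{\sigma_S(A)}$ (one must check that $\sigma_S(A)$ is the image of $\sigma_S(T)\cup\{\infty\}$ under $\Phi_\alpha$, which follows from the spectral mapping behaviour of $\Phi_\alpha$ already implicit in the preceding definitions). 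By the $S$-functional calculus for bounded operators applied to $A$, for a suitable $A$-admissible slice domain $W$ with $\overline{W}$ in the domain of slice regularity of $g$,
\[
f[T] = g[A] = \frac{1}{2\pi}\int_{\partial(W\cap\C_I)} g(p)\,dp_I\, S_R^{-1}(p,A).
\]
The key step is then to substitute $p = \Phi_\alpha(s) = (s-\alpha)^{-1}$, i.e. $s = \alpha + p^{-1}$, choosing $W$ so that its image under $\Phi_\alpha^{-1}$ is (the complement of) a $T$-admissible slice domain $U$; the point $p=0$, which corresponds to $s=\infty$, must be handled as the source of the $f(\infty)\id$ term.

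The heart of the computation is the transformation formula relating $S_R^{-1}(p,A)$ to $S_R^{-1}(s,T)$. I expect the needed identity to have the form
\[
S_R^{-1}\big(\Phi_\alpha(s),(T-\alpha\id)^{-1}\big) = (\alpha\id - T) + (s-\alpha)\, S_R^{-1}(s,T)(s-\alpha) \quad\text{(up to the precise ordering of the noncommutative factors)},
\]
which is the operator analogue of the scalar identity for the right Cauchy kernel under $x\mapsto(x-\alpha)^{-1}$; this can be verified by direct algebraic manipulation using the definition \eqref{SRDefi} of $S_R^{-1}(s,T)$ and the fact that $\alpha$ is real, so that $\overline{\Phi_\alpha(s)}$ and the modulus $|\Phi_\alpha(s)|^2$ express cleanly in terms of $s$ and $\alpha$. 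Combined with $dp_I = d(\Phi_\alpha(s))_I$, which introduces a Jacobian factor $-(s-\alpha)^{-2}$ appropriately slice-ordered, the polynomial part of the transformed kernel contributes, after evaluating the residue-type contribution at $p=0$, precisely $g(0)\id = f(\infty)\id$, while the remaining part reorganizes into $\frac{1}{2\pi}\int_{\partial(U\cap\C_I)} f(s)\,ds_I\, S_R^{-1}(s,T)$. The independence of $I$ is inherited from the corresponding statement for $g[A]$ in the bounded $S$-functional calculus, since the change of variables is $\C_I$-preserving (as $\alpha$ is real, $\Phi_\alpha$ maps $\C_I$ to $\C_I$).

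The main obstacle will be bookkeeping the noncommutativity: the factors $(s-\alpha)$, the kernel $S_R^{-1}(s,T)$, and the differential $ds_I$ do not commute, and one must be scrupulous about whether quaternionic scalars multiply from the left or the right — in particular the orientation of $\partial(W\cap\C_I)$ versus $\partial(U\cap\C_I)$ flips under $p\mapsto p^{-1}$, and the sign bookkeeping in $ds_I = -ds\,I$ must be tracked through the substitution. A secondary technical point is justifying that one may deform the contour so that it both avoids $\sigma_S(T)$ and has its image under $\Phi_\alpha$ avoiding $\sigma_S(A)\cup\{0\}$ except for enclosing $0$; this uses the axial symmetry of the spectra and the Cauchy-type independence of the contour already available from Theorem~\ref{Cauchygenerale}. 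Once these are handled, the identification of the extra term as $f(\infty)\id$ is a residue computation at $p=0$ that mirrors exactly the classical complex case described in the introduction.
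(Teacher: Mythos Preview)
The paper does not prove this theorem: it appears in Section~2.2 as a preliminary result, with the proofs of the $S$-functional calculus material deferred to \cite{MR2752913}. So there is no proof in the paper to compare against.

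Your outline is the standard route (and essentially the one taken in \cite{MR2752913}, mirroring the classical Dunford--Schwartz argument): apply the bounded integral formula to $g[A]$ with $A=(T-\alpha\id)^{-1}$ and $g=f\circ\Phi_\alpha^{-1}$, then change variables $p=(s-\alpha)^{-1}$. You have correctly identified the three technical ingredients --- the algebraic identity relating $S_R^{-1}(p,A)$ to $S_R^{-1}(s,T)$, the residue at $p=0$ producing $f(\infty)\id$, and the orientation reversal under inversion --- and correctly flagged the noncommutativity bookkeeping as the main hazard. One concrete refinement: since $\alpha\in\R$, the key observation that makes the resolvent identity tractable is that
\[
T^2 - 2\Re(s)T + |s|^2\id \;=\; (T-\alpha\id)^2 - 2\Re(s-\alpha)(T-\alpha\id) + |s-\alpha|^2\id,
\]
so that $(A^2 - 2\Re(p)A + |p|^2\id)^{-1} = |s-\alpha|^2\,(T-\alpha\id)^2\,(T^2-2\Re(s)T+|s|^2\id)^{-1}$; from this the transformation formula for $S_R^{-1}$ falls out by direct substitution, and the scalar factors $(s-\alpha)^{\pm1}$ all lie in $\C_I$ and therefore commute with $ds_I$ up to conjugation. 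Your hedge ``up to the precise ordering of the noncommutative factors'' is thus less fraught than you suggest --- the realness of $\alpha$ is exactly what keeps the computation honest.
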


The functional calculi defined above are consistent with algebraic operations such as addition and multiplication of functions, multiplications with scalars from the left and composition as far as they are supported by the class $\mon^R_{\sigma_S(T)}$. Note that we denote functions of an operator obtained by the $S$-functional calculus with square brackets in order to distinguish them from those obtained by the calculus we define in this paper.

\subsection{Strongly continuous groups of quaternionic operators}
The $S$-functional calculus is the fundamental tool to develop the theory of strongly continuous semigroups and groups of quaternionic operators, cf. \cite{MR2803786}. A family of bounded right-linear operators $(\U(t))_{ t\geq 0}$ on $V$ is called a strongly continuous quaternionic semigroup if $\,\U(0) = \id$ and $\,\U(t_1 + t_2) = \U(t_1) + \U(t_2)$ for $t_1, t_2 \geq 0$ and if $t\mapsto \U(t)v$ is a continuous function on $[0,\infty)$ for any $v\in V$.
\begin{definition}{\rm
Let $(\U(t))_{ t\geq 0}$ be a strongly continuous quaternionic semigroup. Set
\[ \dom(T) =\left\{ v\in V : \lim_{h\to0^+} \frac{1}{h}(\U(h)v - v)\; {\rm exists} \right\}\]
and
\[Tv = \lim_{h\to0^+} \frac{1}{h}(\U(h)v - v), \quad v\in \mathcal{D}(T).\]
The operator $T$ is called the quaternionic infinitesimal generator of the semigroup $(\U(t))_{t \geq 0}$.

We indicate that $T$ is the infinitesimal generator of the semigroup $(\U(t))_{t \geq 0}$ by writing $\U_T(t)$ instead to $\U(t)$.}
\end{definition}
The set $\dom(T)$ is a right subspace that is dense  in $V$ and $T: \dom(T)\to V$ is a closed quaternionic right linear operator. Moreover
\[ \U_T(t) = \exp[tT],\quad t\geq 0.\]

\begin{theorem}
Let $(\U_T(t))_{t \geq 0}$ be a strongly continuous quaternionic semigroup and let $T$ be its quaternionic infinitesimal generator. Then
\[\omega_0 := \lim_{t\to\infty}\frac{1}{t}\ln\left\|\U_T(t)\right\| <\infty.\]
If $s\in\hh$ with $\Re(s) > \omega_0$  then $s$ belongs $\rho_S(T)$ and
\[ S_R^{-1}(s,T) = \int_{0}^\infty e^{-ts}\:\!\U(t)\,dt. \]
\end{theorem}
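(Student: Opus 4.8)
The plan is to reduce the entire statement to classical $C_0$-semigroup theory on a complex Banach space, by restricting to one slice $\cc_I$ and factoring the operator $\mathcal{R}_s(T)=T^2-2\Re(s)T+|s|^2\id$. First I would prove finiteness of the growth bound, which is routine: since $(\U_T(t))_{t\ge0}$ is a strongly continuous semigroup of bounded operators on the Banach space $V$, the uniform boundedness principle gives $M:=\sup_{t\in[0,1]}\|\U_T(t)\|<\infty$, and $t\mapsto\ln\|\U_T(t)\|$ is subadditive because the operator norm is submultiplicative and $\U_T$ obeys the semigroup law. Fekete's subadditivity lemma then yields $\omega_0=\inf_{t>0}\frac1t\ln\|\U_T(t)\|$, which is at most $\ln\|\U_T(1)\|\le\ln M<\infty$ (it may be $-\infty$, but only $\omega_0<\infty$ is asserted). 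The same reasoning produces, for each $\omega'>\omega_0$, a constant $M_{\omega'}$ with $\|\U_T(t)\|\le M_{\omega'}e^{\omega' t}$ for all $t\ge0$, which makes every Laplace integral occurring below absolutely convergent whenever $\Re(s)>\omega_0$.

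Next I would fix $s$ with $\Re(s)>\omega_0$ and set $I:=I_s$, so that $s,\overline{s}\in\cc_I$ and $\Re(\overline{s})=\Re(s)>\omega_0$. Viewing $V$ as a complex Banach space $V_I$ on which $\cc_I$ acts by \emph{right} multiplication, $T$ becomes a closed, densely defined $\cc_I$-linear operator on $V_I$ generating the $C_0$-semigroup $(\U_T(t))_{t\ge0}$, whose classical growth bound (same formula, same norm) is again $\omega_0$. Writing $R_\mu v:=v\mu$, classical semigroup theory applied in $V_I$ gives that $R_s-T$ and $R_{\overline{s}}-T$ are boundedly invertible, with $(R_s-T)^{-1}v=\int_0^\infty(\U_T(t)v)e^{-st}\,dt$ and $(R_{\overline{s}}-T)^{-1}v=\int_0^\infty(\U_T(t)v)e^{-\overline{s}t}\,dt$. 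A short computation using only right $\hh$-linearity of $T$ and the reality of $s+\overline{s}$ and $s\overline{s}$ shows that on $\dom(T^2)$
\[
(T-R_s)(T-R_{\overline{s}})=T^2-2\Re(s)\,T+|s|^2\id=\mathcal{R}_s(T),
\]
so $\mathcal{R}_s(T)\colon\dom(T^2)\to V$ is a bijection with inverse $\mathcal{R}_s(T)^{-1}=(T-R_{\overline{s}})^{-1}(T-R_s)^{-1}\in\boundOP(V)$; since $\mathcal{R}_s(T)$ is right $\hh$-linear so is this inverse, and hence $s\in\rho_S(T)$.

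It then remains to identify $S_R^{-1}(s,T)=-(T-\overline{s}\,\id)\mathcal{R}_s(T)^{-1}$ — in which $\overline{s}\,\id$ must be \emph{left} multiplication by $\overline{s}$, the only reading keeping the operator right $\hh$-linear — with the Laplace transform. Composing the two resolvent integrals, using that $(R_{\overline{s}}-T)^{-1}$ is $\cc_I$-linear (so commutes with right multiplication by $\cc_I$), and substituting $u=t+r$, I would obtain
\[
\mathcal{R}_s(T)^{-1}v=\int_0^\infty h(u)\,\U_T(u)v\,du,\qquad h(u)=\frac{e^{-\overline{s}u}-e^{-su}}{s-\overline{s}},
\]
with everything now lying in the commutative field $\cc_I$; here $h$ is in fact the \emph{real-valued} function $h(u)=e^{-\Re(s)u}\sin(|\underline{s}|u)/|\underline{s}|$ (equal to $ue^{-\Re(s)u}$ when $\underline{s}=0$), in particular $h(0)=0$. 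Computing the strong limit of $\frac1\varepsilon(\U_T(\varepsilon)-\id)\mathcal{R}_s(T)^{-1}v$ via the change of variable $u\mapsto u-\varepsilon$ in the integral gives $T\mathcal{R}_s(T)^{-1}v=-\int_0^\infty h'(u)\U_T(u)v\,du$ (the boundary term vanishing because $h(0)=0$), so that
\[
S_R^{-1}(s,T)v=\int_0^\infty\bigl(h'(u)+\overline{s}\,h(u)\bigr)\U_T(u)v\,du=\int_0^\infty e^{-us}\,\U_T(u)v\,du,
\]
the last equality being the elementary identity $h'(u)+\overline{s}\,h(u)=e^{-su}$ in $\cc_I$. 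This is precisely $S_R^{-1}(s,T)=\int_0^\infty e^{-ts}\U_T(t)\,dt$.

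The main obstacle is the middle step. A naive imitation of the complex proof that $(\lambda\id-A)^{-1}=\int_0^\infty e^{-\lambda t}e^{tA}\,dt$ breaks down here: the candidate operator $\int_0^\infty e^{-ts}\U_T(t)\,dt$ does not commute with left multiplication by $s$ (the $\U_T(t)$ are only right linear), so the usual step ``$Q(s)(s\,\cdot)=s\,Q(s)(\cdot)$'' is unavailable. The way around this — that $\mathcal{R}_s(T)$, because it has \emph{real} coefficients, does factor over the single slice $\cc_I$ into two commuting factors to which classical semigroup theory applies verbatim — is the real content; the remainder is bookkeeping, but one must keep careful track of which side each quaternionic scalar acts on ($\cc_I$ on the right in $V_I$, $\overline{s}$ on the left in $S_R^{-1}(s,T)$).
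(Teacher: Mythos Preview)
The paper does not actually prove this theorem; it is stated in Section~2.3 as a preliminary result imported from \cite{MR2803786}, so there is no ``paper's own proof'' to compare against. Your argument, however, is a correct and self-contained proof.

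Your strategy---restricting scalars to the single slice $\cc_I$ with $I=I_s$, factoring $\mathcal{R}_s(T)=(T-R_s)(T-R_{\overline s})$ via the right-multiplication operators $R_\mu$, and then invoking the classical Hille--Yosida Laplace representation in the complex Banach space $V_I$---is sound. The factorization works precisely because $s+\overline s$ and $s\overline s$ are real, so $\mathcal{R}_s(T)$ has real coefficients and its inverse inherits right $\hh$-linearity even though the individual factors $(R_s-T)^{-1}$ do not. Your observation that the convolution kernel $h(u)=(e^{-\overline s u}-e^{-su})/(s-\overline s)$ is real-valued is the key to moving it freely past $\U_T(u)$, and the final identity $h'(u)+\overline s\,h(u)=e^{-su}$ closes the computation cleanly. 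The growth-bound step via Fekete's lemma is standard and correct.

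One small point worth making explicit in a write-up: after establishing that $\mathcal{R}_s(T)^{-1}\in\boundOP(V)$ is right $\hh$-linear, you should note that it maps $V$ into $\dom(T^2)$ (which follows from your factorization, since each factor $(R_\mu-T)^{-1}$ raises the domain by one power of $T$), so that the composition $(T-\overline s\,\id)\mathcal{R}_s(T)^{-1}$ is genuinely everywhere defined and bounded. Likewise, the differentiation-under-the-integral step producing $T\mathcal{R}_s(T)^{-1}v=-\int_0^\infty h'(u)\U_T(u)v\,du$ deserves a one-line dominated-convergence justification using the exponential bound $\|\U_T(t)\|\le M_{\omega'}e^{\omega' t}$ and the explicit decay of $h$ and $h'$. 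These are routine, and your outline already signals awareness of them.
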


The question whether a closed linear operator is the infinitesimal generator of a strongly continuous semigroup is answered by the Hille-Yoshida-Phillips Theorem.

\begin{theorem}
Let $T$ be a closed linear operator with dense domain. Then $T$ is the infinitesimal generator of a strongly continuous semigroup if and only if there exist constants $\omega\in\rr$ and $M>0$ such that $\sigma_S(T) \subset\{s\in\hh: \Re(s) \leq \omega\}$ and such that for any $s_0\in\rr$ with $ s_0>\omega$
\[\left\|(S_R^{-1}(s_0,T))^n\right\|\leq \frac{M}{(s_0 - \omega)^n} \qquad \text{for }n\in\nn.\]
\end{theorem}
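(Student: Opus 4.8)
\medskip

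\noindent\textbf{Proof proposal.}\quad The plan is to prove the two implications separately. Necessity is short: assume $T$ generates the strongly continuous semigroup $(\U_T(t))_{t\ge0}$. The preceding theorem supplies $\omega_0:=\lim_{t\to\infty}\tfrac1t\ln\|\U_T(t)\|<\infty$, the fact that every $s$ with $\Re(s)>\omega_0$ belongs to $\rho_S(T)$ — so that $\sigma_S(T)\subset\{s\in\hh:\Re(s)\le\omega_0\}$ — and the identity $S_R^{-1}(s,T)=\int_0^\infty e^{-ts}\U_T(t)\,dt$ for $\Re(s)>\omega_0$. Pick $\omega>\omega_0$ and $M\ge1$ with $\|\U_T(t)\|\le Me^{\omega t}$ for all $t\ge0$. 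For real $s_0>\omega$, part (i) gives $S_R^{-1}(s_0,T)=(s_0\id-T)^{-1}$, so $(S_R^{-1}(s_0,T))^n=(s_0\id-T)^{-n}=S_R^{-n}(s_0,T)$; differentiating the Laplace identity $n-1$ times with respect to $s_0$ (justified by dominated convergence, and consistent with Corollary~\ref{SResDeriv}, since for real $s$ the slice derivative is $\partial_{s_0}$) yields $(S_R^{-1}(s_0,T))^n=\tfrac1{(n-1)!}\int_0^\infty t^{n-1}e^{-ts_0}\U_T(t)\,dt$, whence
\[\big\|(S_R^{-1}(s_0,T))^n\big\|\le\frac{M}{(n-1)!}\int_0^\infty t^{n-1}e^{-(s_0-\omega)t}\,dt=\frac{M}{(s_0-\omega)^n}.\]

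For the converse I would run the Yosida-approximation construction, adapted to quaternionic operators. First reduce to the contraction case: replacing $T$ by $T-\omega\id$ (a real shift, under which $\sigma_S(\cdot)$ and $S_R^{-1}(\cdot\,,T)$ both translate by $\omega$, and whose semigroup, once produced, yields the one for $T$ after multiplication by $e^{\omega t}$) we may assume $\omega=0$, so that every real $\mu>0$ lies in $\rho_S(T)$ and $\|(S_R^{-1}(\mu,T))^n\|\le M\mu^{-n}$ for $n\in\nn$; then pass to the equivalent norm $\|v\|_\ast:=\sup_{n\ge0}\sup_{\mu>0}\|\mu^n(S_R^{-1}(\mu,T))^nv\|$, which obeys $\|v\|\le\|v\|_\ast\le M\|v\|$, is a genuine quaternionic norm (each $\mu^n(S_R^{-1}(\mu,T))^n$ is bounded right-linear and $\mu^n\in\rr$), and satisfies $\|\mu^n(S_R^{-1}(\mu,T))^n\|_\ast\le1$. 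Working with $\|\cdot\|_\ast$, introduce the bounded operators $J_\lambda:=\lambda S_R^{-1}(\lambda,T)=\lambda(\lambda\id-T)^{-1}$ and $T_\lambda:=\lambda(J_\lambda-\id)=\lambda^2S_R^{-1}(\lambda,T)-\lambda\id$; using $\|J_\lambda\|_\ast\le1$ and $\|(\lambda\id-T)^{-1}\|_\ast\le\lambda^{-1}$ one checks that $J_\lambda v\to v$ and $T_\lambda v=J_\lambda Tv\to Tv$ for $v\in\dom(T)$ as $\lambda\to\infty$, and that all the $T_\lambda$ and $J_\mu$ commute. Define $\U_\lambda(t):=\exp[tT_\lambda]=e^{-\lambda t}\exp[t\lambda^2S_R^{-1}(\lambda,T)]$; since $T_\lambda\in\boundOP(V)$, this norm-convergent series is a norm-continuous group, and, as it involves only the single operator $S_R^{-1}(\lambda,T)$ with the real coefficients $t\lambda^2$, the usual exponential calculus applies and gives $\|\U_\lambda(t)\|_\ast\le e^{-\lambda t}\sum_{k\ge0}\tfrac{(\lambda^2t)^k}{k!}\lambda^{-k}=1$ for $t\ge0$.

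Now pass to the limit. For $v\in\dom(T)$, writing $\U_\lambda(t)v-\U_\mu(t)v=\int_0^t\U_\mu(t-s)\U_\lambda(s)(T_\lambda-T_\mu)v\,ds$ (valid because all operators commute and $\tfrac{d}{ds}\U_\lambda(s)=T_\lambda\U_\lambda(s)$) gives $\|\U_\lambda(t)v-\U_\mu(t)v\|_\ast\le t\,\|T_\lambda v-T_\mu v\|_\ast\to0$ uniformly for $t$ in compact intervals. Density of $\dom(T)$ and the uniform bound $\|\U_\lambda(t)\|_\ast\le1$ then produce a limit $\U(t)v:=\lim_{\lambda\to\infty}\U_\lambda(t)v$ for every $v\in V$, uniform on compacts. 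Passing to the limit in $\U_\lambda(0)=\id$, in the group law, and in strong continuity, $(\U(t))_{t\ge0}$ is a strongly continuous contraction semigroup; and from $\U_\lambda(t)v-v=\int_0^t\U_\lambda(s)T_\lambda v\,ds\to\int_0^t\U(s)Tv\,ds$ we get $\U(t)v-v=\int_0^t\U(s)Tv\,ds$ for $v\in\dom(T)$, so the generator $S$ of $(\U(t))$ extends $T$. Since for real $\lambda>0$ both $\lambda\id-T$ and $\lambda\id-S$ map their domains bijectively onto $V$ (the second because $S$ generates a contraction semigroup, so $(0,\infty)\subset\rho_S(S)$ by the necessity direction), the inclusion $T\subset S$ forces $\dom(S)=\dom(T)$, hence $S=T$. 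Undoing the renorming and the real shift completes the argument.

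The main obstacle, as usual, lies entirely in the sufficiency part, and it is of a ``soft-analysis'' nature: one must verify that $\|\cdot\|_\ast$ really is an equivalent norm compatible with right multiplication by quaternions (which uses right-linearity of the $S_R^{-n}(\mu,T)$ and the fact that $\mu^n$ is real), and that the operator exponentials $\exp[tT_\lambda]$ together with all the intermediate operators lie inside a single commutative closed subalgebra of $\boundOP(V)$, so that the series estimate, the relation $\tfrac{d}{dt}\U_\lambda(t)=T_\lambda\U_\lambda(t)$, the group law, and the fundamental-theorem-of-calculus manipulations all remain valid notwithstanding the noncommutativity of $\hh$. It is also worth recording precisely how $\sigma_S(\cdot)$ and $S_R^{-1}(\cdot\,,T)$ behave under the real translation $T\mapsto T-\omega\id$; the fact that real $\mu>\omega$ belongs to $\rho_S(T)$ is already built into the hypothesis, since the bound on $\|(S_R^{-1}(s_0,T))^n\|$ is only meaningful for $s_0\in\rho_S(T)$.
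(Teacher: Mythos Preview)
The paper does not actually prove this theorem: it appears in Section~2.3 among the preliminary results on quaternionic semigroups, stated without proof and attributed (together with the surrounding material) to \cite{MR2803786}. There is therefore no in-paper argument against which to compare your proposal.

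On its own merits, your proposal follows exactly the classical Hille--Yosida route, and this is also how the result is obtained in \cite{MR2803786}: the necessity direction via differentiation of the Laplace representation is correct (your observation that $S_R^{-n}(s_0,T)=(s_0\id-T)^{-n}=(S_R^{-1}(s_0,T))^n$ for real $s_0$ is exactly what is needed), and the sufficiency via real shift, renorming, and Yosida approximants $T_\lambda=\lambda^2(\lambda\id-T)^{-1}-\lambda\id$ with real $\lambda$ is the right strategy. Your closing list of quaternion-specific checks (right-homogeneity of $\|\cdot\|_\ast$, commutativity of the $T_\lambda$ and $J_\mu$, behaviour of $\sigma_S$ and $S_R^{-1}$ under real shifts) is well targeted; these are indeed the only places where noncommutativity could cause trouble, and they all go through because every scalar in sight is real.

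One genuine technical slip: the renorming $\|v\|_\ast=\sup_{n\ge0}\sup_{\mu>0}\|\mu^n(\mu\id-T)^{-n}v\|$ does \emph{not} obviously yield $\|\mu(\mu\id-T)^{-1}\|_\ast\le1$. Computing $\|\mu(\mu\id-T)^{-1}v\|_\ast$ produces terms of the form $\|\lambda^n(\lambda\id-T)^{-n}\mu(\mu\id-T)^{-1}v\|$ with $\lambda\neq\mu$, and these mixed products are not among the quantities over which $\|v\|_\ast$ takes its supremum. The standard repair (as in Pazy, Theorem~1.5.2) is to set $|v|_\mu:=\sup_{n\ge0}\mu^n\|(\mu\id-T)^{-n}v\|$ for each fixed $\mu$, use the resolvent identity to show $|\lambda(\lambda\id-T)^{-1}v|_\mu\le|v|_\mu$ for $0<\lambda\le\mu$ and hence $|v|_\lambda\le|v|_\mu$, and then take $\|v\|_\ast:=\lim_{\mu\to\infty}|v|_\mu$. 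This argument carries over verbatim here, since all resolvents involved have real argument and therefore satisfy the classical resolvent identity $(\lambda\id-T)^{-1}-(\mu\id-T)^{-1}=(\mu-\lambda)(\lambda\id-T)^{-1}(\mu\id-T)^{-1}$.
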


We consider the problem to characterize when a strongly continuous semigroup of operators $(\U_T(t))_{t\geq 0 }$
can be extended to a group $(\Z_T(t))_{t\in\rr}$ of operators.
This extension is unique if it exists and if the family $\U_-(t)= \Z_T(-t)$,  $t\geq 0$,
is a strongly continuous semigroup.
Consider the identity
$$
\frac{1}{h}[\U_-(h)v-v]=\frac{1}{-h}[-\Z_T(-2)[ \Z_T(2-h)v-\Z_T(2)v]],\ \ \ {\rm for}\ \ h\in (0,1).
$$
By taking the limit for $h\to 0$ we have that the infinitesimal generator of $\U_{-}(t)$ is $-T$ and $\dom (-T)=\dom (T)$.
In this case $T$ is called the quaternionic infinitesimal generator of the group $(\Z_T(t))_{t \in \rr}$.
The next theorem gives a necessary and sufficient condition such that a semigroup can be extended to a group.
\begin{theorem}\label{GroupGen}
An operator $T\in\closOP(V)$ is the quaternionic infinitesimal generator of a strongly continuous group of bounded quaternionic linear operators if and only if
 there exist real numbers $M>0$ and $\omega \geq 0$ such that
\begin{equation}\label{paleologo}
\|(S_R^{-1}(s_0,T))^n\|\leq \frac{M}{(|s_0|-\omega)^n} ,\quad \text{for } \omega<|s_0|.
\end{equation}
If $T $ generates the group  $(\Z_T(t))_{t\in \rr}$, then $\|\Z_T(t)\|\leq Me^{\omega |t|}$.
\end{theorem}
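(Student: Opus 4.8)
The plan is to deduce the statement from the quaternionic Hille--Yoshida--Phillips theorem recalled above, applied separately to $T$ and to $-T$. The bridge between the two is the elementary identity $S_R^{-1}(s_0,-T)=-S_R^{-1}(-s_0,T)$, valid for real $s_0$ because then $\mathcal R_{s_0}(-T)=T^2+2s_0T+s_0^2\id=\mathcal R_{-s_0}(T)$, together with $\mathcal R_s(-T)=\mathcal R_{-\overline s}(T)$ in general, which by axial symmetry of the $S$-spectrum forces $\sigma_S(-T)=-\sigma_S(T)$.

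For necessity, I would start from a group $(\Z_T(t))_{t\in\rr}$ generated by $T$: then $\U_T(t):=\Z_T(t)$ and $\U_-(t):=\Z_T(-t)$, $t\ge0$, are strongly continuous semigroups with generators $T$ and $-T$ respectively, as explained before the statement. Hille--Yoshida--Phillips applied to each yields constants $M_i>0$, $\omega_i\in\rr$ with the associated power estimates for $s_0>\omega_i$; replacing $\omega_1,\omega_2$ by $\omega:=\max\{\omega_1,\omega_2,0\}\ge0$ and $M_1,M_2$ by $M:=\max\{M_1,M_2\}$ (which only weakens the estimates) and using $\|(S_R^{-1}(s_0,-T))^n\|=\|(S_R^{-1}(-s_0,T))^n\|$ to convert the estimate for $-T$ into one for $T$ at negative real points, I obtain \eqref{paleologo} by treating $s_0>\omega$ and $s_0<-\omega$ separately.

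For sufficiency, I would first note that \eqref{paleologo} restricted to $s_0>\omega$ reads $\|(S_R^{-1}(s_0,T))^n\|\le M/(s_0-\omega)^n$, and that for real $s_0$ one has $S_R^{-n}(s_0,T)=(S_R^{-1}(s_0,T))^n$ since $T^2-2s_0T+s_0^2\id=(s_0\id-T)^2$. Feeding this into Corollary~\ref{SResDeriv}, the slice regular Taylor series of $s\mapsto S_R^{-1}(s,T)$ about $s_0$ has radius of convergence at least $s_0-\omega$, so $B_{s_0-\omega}(s_0)\subset\rho_S(T)$; taking the union over $s_0>\omega$ shows $\{\Re(s)>\omega\}\subset\rho_S(T)$, and the same argument at negative real points gives $\{\Re(s)<-\omega\}\subset\rho_S(T)$, whence $\sigma_S(T)\subset\{|\Re(s)|\le\omega\}\subset\{\Re(s)\le\omega\}$. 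Hille--Yoshida--Phillips then produces a strongly continuous semigroup $\U_T(t)$, $t\ge0$, with generator $T$ and $\|\U_T(t)\|\le Me^{\omega t}$; transferring \eqref{paleologo} to $-T$ via the identities above, the same theorem produces $\U_-(t)$, $t\ge0$, with generator $-T$ and $\|\U_-(t)\|\le Me^{\omega t}$. To assemble a group I would show $\U_T(t)$ and $\U_-(t)$ are mutually inverse: for $v\in\dom(T)=\dom(-T)$ the map $s\mapsto\U_T(s)\U_-(s)v$ is differentiable on $[0,t]$ with derivative $T\U_T(s)\U_-(s)v-\U_T(s)T\U_-(s)v$, which vanishes because $\U_T(s)$ commutes with $T$ on $\dom(T)$; hence $\U_T(t)\U_-(t)v=v$, and by density $\U_T(t)\U_-(t)=\id$, and symmetrically $\U_-(t)\U_T(t)=\id$. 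Setting $\Z_T(t):=\U_T(t)$ for $t\ge0$ and $\Z_T(t):=\U_-(-t)$ for $t<0$, the group law $\Z_T(t_1+t_2)=\Z_T(t_1)\Z_T(t_2)$ follows from a case analysis on the signs of $t_1,t_2$ using the two semigroup laws and $\U_T(t)\U_-(t)=\id$, strong continuity on $\rr$ follows from continuity at $0$ together with the group law, and by the discussion preceding the statement $T$ is its generator; the bound $\|\Z_T(t)\|\le Me^{\omega|t|}$ is then immediate from $\|\U_T(t)\|\le Me^{\omega t}$ and $\|\U_-(t)\|\le Me^{\omega t}$.

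The algebraic identities relating the $S$-resolvent and $S$-spectrum of $T$ and $-T$ are routine. The two places where real work is needed are: extracting the spectral inclusion $\sigma_S(T)\subset\{|\Re(s)|\le\omega\}$ from the real-axis power estimates through the slice regular Taylor expansion of the $S$-resolvent (one must check that the resulting series genuinely inverts $\mathcal R_s(T)$ on the ball, which uses the slice regularity theory and Corollary~\ref{SResDeriv}); and the differentiation argument showing $\U_T(t)\U_-(t)=\id$, which relies on the standard smoothness and commutation properties of strongly continuous quaternionic semigroups established in \cite{MR2803786}. I expect the latter to be the main obstacle, since it is the step where the two one-sided semigroups are actually forced to fit together into a single two-sided group.
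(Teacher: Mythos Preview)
The paper does not prove Theorem~\ref{GroupGen}; it is stated in Section~\ref{S2} as a preliminary result drawn from \cite{MR2803786} and carries no proof. Your plan is the standard argument for this statement, transported to the quaternionic setting, and it is correct: reduce to the Hille--Yoshida--Phillips criterion applied to $T$ and to $-T$, translate between them via $S_R^{-1}(s_0,-T)=-S_R^{-1}(-s_0,T)$ at real $s_0$, and splice the two semigroups into a group using $\U_T(t)\U_-(t)=\id$.

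Two remarks on the execution. First, the Hille--Yoshida--Phillips theorem as stated in this paper lists the spectral inclusion $\sigma_S(T)\subset\{\Re(s)\le\omega\}$ as a separate hypothesis, so you are right that it must be derived from \eqref{paleologo} before the theorem can be invoked; your Taylor-series argument via Corollary~\ref{SResDeriv} does this correctly, since for real $s_0$ one has $S_R^{-(n+1)}(s_0,T)=(s_0\id-T)^{-(n+1)}=(S_R^{-1}(s_0,T))^{n+1}$ and hence the series $\sum_{n\ge0}(-1)^nS_R^{-(n+1)}(s_0,T)(s-s_0)^n$ converges on $B_{s_0-\omega}(s_0)$ and defines a slice regular extension of the resolvent there. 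Second, the product-rule differentiation of $s\mapsto\U_T(s)\U_-(s)v$ requires the uniform boundedness of $\U_T$ on compact time intervals together with $\U_-(s)v\in\dom(T)$ and the commutation $\U_T(s)Tw=T\U_T(s)w$ for $w\in\dom(T)$; all of these are standard properties of strongly continuous quaternionic semigroups established in \cite{MR2803786}, so the step you flag as the main obstacle is routine once those facts are in hand.
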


\section{The main results}\label{sec3}

The functional calculus considered in this paper is based on the quaternionic Laplace-Stieltjes-transform. In order to define it, we recall some well-known results on complex measures in the quaternionic setting.
\subsection{Preliminary of quaternionic measure theory}
Let $(\Omega, \mathcal A)$ be a measurable space. A {\em quaternionic measure} is a function $\mu\colon\mathcal A\to \hh $ that satisfies
$$\mu\left(\mathbb \bigcup_{n\in\mathbb N}A_n\right) = \sum_{n\in \mathbb N} \mu(A_n)$$
for any sequence of pairwise disjoint sets $(A_{n})_{n\in\mathbb N}\subset\mathcal A$. We denote the set of all quaternionic measures on $\mathcal A$ by $\meas(\Omega,\mathcal A, \hh )$ or simply by $\meas(\Omega,\hh)$ or $\meas(\Omega)$ if there is no possibility of confusion.
The set of all quaternionic measures is a two-sided quaternionic vector space with the operations
$(\mu+\nu)(A) = \mu(A) + \nu(A)$, $(a\mu)(A) = a\mu(A)$ and $(\mu a)(A) = \mu(A)a$
for ${\mu,\nu\in \meas(\Omega)}$,  $a\in\hh $ and $A\in\mathcal A$.

\begin{remark}\label{MSplit}
Let $I,J\in\mathbb S$ with $I\perp J$. Since $\hh = \cc_I + J\cc_I$, it is immediate  that a mapping $\mu: \mathcal A\to\hh$ is a quaternionic measure if and only if there exist two $\C_I$-valued complex measures $\mu_1,\mu_2$ such that $\mu(A) = \mu_1(A) + J\mu_2(A)$ for any $A\in\mathcal{A}$.

Moreover, since also $\hh = \cc_I + \cc_I J$, there exist  $\cc_I$-valued measures $\tilde{\mu}_1,\tilde{\mu}_2$ such that $\mu(A) = \tilde{\mu}_1(A) + \tilde{\mu}_2(A) J$ for any $A\in\mathcal{A}$.
\end{remark}
Agrawal and Kulkarni showed in \cite[Section~3]{agrawal} that one can define the variation of a quaternionic measure just as for complex measures
and that  the Radon-Nikod\`ym theorem also holds true in this setting.
\begin{definition}{\rm
Let $\mu\in \meas(\Omega, \mathcal A, \hh )$. For all $A\in\mathcal A$  denote by $\Pi(A)$ the set of all countable partitions $\pi$ of $A$ into pairwise disjoint, measurable sets $A_i, i\in\mathbb N$. We call the set function
\[|\mu|(A) = \sup\Big\{\sum_{A_i\in\pi}|\mu(A_i)|\,\Big|\, \pi\in\Pi(A)\Big\}\qquad\text{for all }A\in\mathcal A,\]
 the {\em total variation} of $\mu$.}
\end{definition}
\begin{corol}\label{VarProp}
The total variatation $|\mu|$ of a measure $\mu\in \meas(\Omega, \mathcal A, \hh )$ is a finite positive measure on $\Omega$.
Moreover,  $|a \mu| = |\mu a | =  |\mu| |a|$ and $|\mu + \nu| \leq |\mu| + |\nu|$ for any $\mu,\nu\in \meas(\Omega, \mathcal A, \hh )$ and $a\in \hh$.
\end{corol}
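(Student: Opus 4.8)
The plan is to follow the classical arguments for complex measures (see, e.g., \cite[Chapter~6]{rudin}), using the splitting of Remark~\ref{MSplit} to handle the places where the quaternionic structure could cause trouble; alternatively one may invoke \cite{agrawal} for these facts. First I would show that $|\mu|$ is a positive measure. Plainly $|\mu|(\emptyset)=0$ since $\mu(\emptyset)=0$, and $|\mu|\geq 0$ by definition. For $\sigma$-additivity, let $A=\bigcup_{j\in\nn}E_j$ with the $E_j$ pairwise disjoint and measurable. Given a partition $\pi=\{A_i\}\in\Pi(A)$, the family $\{A_i\cap E_j\}_i$ is a partition of $E_j$ and $\{A_i\cap E_j\}_j$ is a partition of $A_i$, so the triangle inequality for the quaternionic modulus together with the countable additivity of $\mu$ gives $\sum_i|\mu(A_i)|\leq\sum_i\sum_j|\mu(A_i\cap E_j)|=\sum_j\sum_i|\mu(A_i\cap E_j)|\leq\sum_j|\mu|(E_j)$, and taking the supremum over $\pi$ yields $|\mu|(A)\leq\sum_j|\mu|(E_j)$. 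For the reverse inequality I would, for each $j$ and each $\varepsilon>0$, pick a partition of $E_j$ whose associated sum exceeds $|\mu|(E_j)-\varepsilon 2^{-j}$ (dealing with $|\mu|(E_j)=\infty$ separately), merge these partitions into a partition of $A$, and let $\varepsilon\to 0$. This step is verbatim the complex case, since it uses only the additivity of $\mu$ and the triangle inequality.

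Next, for the finiteness of $|\mu|$ I would reduce to the classical statement rather than reprove it. By Remark~\ref{MSplit} write $\mu=\mu_1+J\mu_2$ with $I,J\in\S$, $I\perp J$, and $\mu_1,\mu_2$ being $\C_I$-valued complex measures. Since $|J|=1$ and the quaternionic modulus is multiplicative and subadditive, $|\mu(A)|\leq|\mu_1(A)|+|\mu_2(A)|$ for every $A\in\mathcal A$, hence $\sum_i|\mu(A_i)|\leq|\mu_1|(A)+|\mu_2|(A)$ for every partition $\{A_i\}$ of $A$, and therefore $|\mu|(A)\leq|\mu_1|(A)+|\mu_2|(A)$. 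As $|\mu_1|$ and $|\mu_2|$ are finite by the classical result for complex measures, $|\mu|$ is finite as well.

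Finally, the algebraic relations are immediate from the definition. For $a\in\hh$ and a partition $\{A_i\}$ of $A$, multiplicativity of the modulus gives $\sum_i|(a\mu)(A_i)|=|a|\sum_i|\mu(A_i)|$ and $\sum_i|(\mu a)(A_i)|=\big(\sum_i|\mu(A_i)|\big)|a|$, and passing to the supremum yields $|a\mu|=|\mu a|=|a|\,|\mu|=|\mu|\,|a|$ (recall $|a|\in\rr$). Likewise, for $\mu,\nu\in\meas(\Omega)$ the triangle inequality gives $\sum_i|(\mu+\nu)(A_i)|\leq\sum_i|\mu(A_i)|+\sum_i|\nu(A_i)|\leq|\mu|(A)+|\nu|(A)$, whence $|\mu+\nu|\leq|\mu|+|\nu|$ after taking the supremum over partitions. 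I expect the finiteness of $|\mu|$ to be the only genuinely delicate point; everything else is a routine transcription of the complex-variable proofs, and Remark~\ref{MSplit} disposes of that point cleanly.
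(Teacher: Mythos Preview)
Your proof is correct. The paper, however, does not give a proof of this corollary at all: it simply records the statement as a direct consequence of the definition of the total variation, pointing to \cite[Section~3]{agrawal} for the fact that the variation of a quaternionic measure is defined and behaves just as in the complex case. Your write-up is therefore strictly more detailed than what the paper provides; the reduction to the complex case via the splitting $\mu=\mu_1+J\mu_2$ of Remark~\ref{MSplit} to obtain finiteness is exactly the sort of argument the cited reference supplies, and the remaining points (countable additivity, $|a\mu|=|\mu a|=|a|\,|\mu|$, and $|\mu+\nu|\le|\mu|+|\nu|$) are, as you say, verbatim transcriptions of the complex proofs using only multiplicativity and subadditivity of the quaternionic modulus.
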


Recall that a measure $\mu$ is called {\em absolutely continuous} with respect to a positive measure $\nu$, if $\mu(A) = 0$ for any $A\in\mathcal A$ with $\nu(A) = 0$. In this case, we write $\mu\ll\nu$.

\begin{theorem}[Radon-Nikod\`ym theorem for quaternionic measures]\label{RadonNikodym}
Let $\nu$ be a $\sigma$-finite positive measure on $(\Omega, \mathcal A)$. A quaternionic measure $\mu\in \meas(\Omega, \mathcal A,\hh )$ is absolutely continuous with respect to  $\nu$ if and only if there exists a unique function $f\in L^1(\Omega,\mathcal A,\nu,\hh )$ such that
\[\mu (A) = \int_{A} f\, d\nu\qquad\text{for all }A\in\mathcal A.\]
Moreover, in this case
\begin{equation}\label{VarIdentitiy}|\mu| (A) = \int_{A} |f|\, d\nu\qquad\text{for all }A\in\mathcal A.\end{equation}
\end{theorem}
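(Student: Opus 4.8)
The plan is to reduce the statement to the classical Radon-Nikod\`ym theorem for complex measures by means of the splitting in Remark~\ref{MSplit} (see also \cite{agrawal}), and then to obtain the variation identity \eqref{VarIdentitiy} by a direct estimate. The sufficiency is trivial: if $\mu(A)=\int_A f\,d\nu$ for all $A\in\mathcal A$, then $\nu(A)=0$ forces $\mu(A)=0$, so $\mu\ll\nu$. For the necessity, assume $\mu\ll\nu$ and fix $I,J\in\S$ with $I\perp J$. By Remark~\ref{MSplit} there are $\C_I$-valued complex measures $\mu_1,\mu_2$ with $\mu(A)=\mu_1(A)+J\mu_2(A)$ for every $A\in\mathcal A$. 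Since $\hh=\C_I\oplus J\C_I$ as a real vector space, $\mu(A)=0$ if and only if $\mu_1(A)=\mu_2(A)=0$; hence $\nu(A)=0$ implies $\mu_1(A)=\mu_2(A)=0$, that is, $\mu_1\ll\nu$ and $\mu_2\ll\nu$. Applying the classical Radon-Nikod\`ym theorem for complex measures (identifying $\C_I$ with $\C$) to the $\sigma$-finite measure $\nu$ gives $f_1,f_2\in L^1(\Omega,\mathcal A,\nu,\C_I)$ with $\mu_k(A)=\int_A f_k\,d\nu$ for $k=1,2$. Setting $f:=f_1+Jf_2$ one has $|f|\leq|f_1|+|f_2|$, so $f\in L^1(\Omega,\mathcal A,\nu,\hh)$; and since a constant quaternion may be pulled through the integral (which is defined componentwise in a real basis of $\hh$), $\int_A f\,d\nu=\int_A f_1\,d\nu+J\int_A f_2\,d\nu=\mu_1(A)+J\mu_2(A)=\mu(A)$.

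Uniqueness follows the same pattern: if $g\in L^1(\Omega,\mathcal A,\nu,\hh)$ also represents $\mu$, write $f-g=h_1+Jh_2$ with $h_1,h_2$ being $\C_I$-valued; then $\int_A h_1\,d\nu+J\int_A h_2\,d\nu=\int_A(f-g)\,d\nu=0$ for all $A\in\mathcal A$, so the direct sum decomposition gives $\int_A h_1\,d\nu=\int_A h_2\,d\nu=0$ for all $A$, whence $h_1=h_2=0$ $\nu$-a.e.\ by the uniqueness part of the complex theorem and thus $f=g$ $\nu$-a.e.

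To prove \eqref{VarIdentitiy}, first note that for every $\pi\in\Pi(A)$ we have $\sum_{A_i\in\pi}|\mu(A_i)|=\sum_{A_i\in\pi}\big|\int_{A_i}f\,d\nu\big|\leq\sum_{A_i\in\pi}\int_{A_i}|f|\,d\nu=\int_A|f|\,d\nu$, so taking the supremum over $\pi$ yields $|\mu|(A)\leq\int_A|f|\,d\nu$. For the reverse inequality, fix $\varepsilon>0$ and pick, using $f\in L^1(\Omega,\mathcal A,\nu,\hh)$, a simple function $s=\sum_k c_k\indi_{E_k}$ with $c_k\in\hh$ and $\{E_k\}_k$ a finite measurable partition of $\Omega$ such that $\int_\Omega|f-s|\,d\nu<\varepsilon$. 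Then $\int_{A\cap E_k}s\,d\nu=c_k\,\nu(A\cap E_k)$, so $\sum_k\big|\int_{A\cap E_k}s\,d\nu\big|=\sum_k|c_k|\,\nu(A\cap E_k)=\int_A|s|\,d\nu$, and since $\{A\cap E_k\}_k$ is a partition of $A$,
\begin{align*}
\int_A|f|\,d\nu
&\leq \int_A|s|\,d\nu+\int_A|f-s|\,d\nu
= \sum_k\Big|\int_{A\cap E_k}s\,d\nu\Big|+\int_A|f-s|\,d\nu\\
&\leq \sum_k\Big(\big|\mu(A\cap E_k)\big|+\int_{A\cap E_k}|f-s|\,d\nu\Big)+\int_A|f-s|\,d\nu
\leq |\mu|(A)+2\varepsilon.
\end{align*}
Letting $\varepsilon\to0$ and combining with the previous inequality gives $|\mu|(A)=\int_A|f|\,d\nu$ for all $A\in\mathcal A$.

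I expect the reduction to the complex case to be entirely routine once the elementary quaternionic integration theory of Section~\ref{sec3} is in place (linearity of the integral, the estimate $\big|\int_A f\,d\nu\big|\leq\int_A|f|\,d\nu$, and approximation of $L^1$-functions by simple functions). The only genuinely delicate point should be the lower bound $\int_A|f|\,d\nu\leq|\mu|(A)$: one must choose the simple function $s$ so that the numbers $\big|\int_{A\cap E_k}s\,d\nu\big|=|c_k|\,\nu(A\cap E_k)$ add up exactly to $\int_A|s|\,d\nu$, which is precisely what allows the partition $\{A\cap E_k\}_k$ to be fed into the supremum defining $|\mu|(A)$.
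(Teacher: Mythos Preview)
Your argument is correct. The existence and uniqueness parts proceed exactly as the paper indicates (it does not give a proof but cites \cite{agrawal}, whose method is precisely the splitting $\mu=\mu_1+J\mu_2$ into $\cc_I$-valued measures and reduction to the classical theorem that you carry out).

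For the variation identity \eqref{VarIdentitiy} the paper again gives no argument of its own but refers to \cite[Theorem~6.13]{rudin}. Rudin's proof is different from yours: he first establishes the polar decomposition $d\mu=h\,d|\mu|$ with $|h|=1$ (this is the content of Corollary~\ref{ACVariation} here, and it only needs the existence part of Radon--Nikod\`ym together with Corollary~\ref{VarProp}), and then observes that $d|\mu|=\overline{h}\,d\mu=\overline{h}f\,d\nu$, whence $\overline{h}f\geq 0$ a.e.\ and so $\overline{h}f=|f|$ a.e. Your route via approximation by simple functions is more elementary and self-contained---it avoids invoking the polar decomposition as an intermediate step---at the price of a slightly longer estimate. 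Both approaches transfer to the quaternionic setting without any new difficulty, since the only facts used (triangle inequality for the integral, density of simple functions in $L^1$, resp.\ the existence of $h$ with $|h|=1$) carry over verbatim.
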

The identity \eqref{VarIdentitiy} follows as in the classical case, cf. \cite[Theorem~6.13]{rudin}.

\begin{corol}\label{ACVariation}
Let $\mu\in \meas(\Omega, \mathcal A,\hh)$. Then there exists an $\mathcal A$-measurable function $f:\Omega\to\hh$ such that $|h(x)| = 1$ for any $x\in\Omega$ and such that $\mu(A)  = \int_Ah \,d|\mu|$ for any $A\in\mathcal A$.
\end{corol}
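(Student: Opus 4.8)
The plan is to obtain the function $h$ directly from the Radon--Nikod\`ym theorem for quaternionic measures and then to upgrade the conclusion $|h|=1$ from "$|\mu|$-almost everywhere" to "everywhere" by modifying $h$ on a null set. First I would note that $|\mu|$ is a finite, hence $\sigma$-finite, positive measure by Corollary~\ref{VarProp}, and that $\mu\ll|\mu|$: indeed, evaluating the supremum defining $|\mu|(A)$ on the trivial partition $\{A\}$ gives $|\mu(A)|\leq|\mu|(A)$ for every $A\in\mathcal A$, so $|\mu|(A)=0$ forces $\mu(A)=0$. Hence Theorem~\ref{RadonNikodym} applies and yields a function $h\in L^1(\Omega,\mathcal A,|\mu|,\hh)$ with $\mu(A)=\int_A h\,d|\mu|$ for all $A\in\mathcal A$, together with the variation identity $|\mu|(A)=\int_A|h|\,d|\mu|$ for all $A\in\mathcal A$.

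Next I would deduce that $|h|=1$ holds $|\mu|$-almost everywhere from this variation identity. Rewriting it as $\int_A(1-|h|)\,d|\mu|=0$ for every $A\in\mathcal A$ and choosing $A=\{x\in\Omega:|h(x)|<1\}$, the integrand is strictly positive on $A$, so $|\mu|(A)=0$; choosing instead $A=\{x\in\Omega:|h(x)|>1\}$, the integrand is strictly negative on $A$, so again $|\mu|(A)=0$. (Alternatively one can run the averaging argument of \cite[Theorem~6.12]{rudin}: on the set $A_r$ where $|h|<r$ for rational $r<1$ one gets $|\mu|(A_r)\leq r\,|\mu|(A_r)$, forcing $|\mu|(A_r)=0$, and symmetrically on $\{|h|>1\}$.) Therefore the set $N:=\{x\in\Omega:|h(x)|\neq 1\}$ is $|\mu|$-null.

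Finally I would redefine $h$ on $N$, setting $h(x):=1$ for $x\in N$ and leaving $h$ unchanged on $\Omega\setminus N$. The new function is still $\mathcal A$-measurable, it now satisfies $|h(x)|=1$ for every $x\in\Omega$, and since $|\mu|(N)=0$ the identity $\mu(A)=\int_A h\,d|\mu|$ is unaffected for every $A\in\mathcal A$; this proves the claim. I do not expect a genuine obstacle here: the argument is the verbatim quaternionic transcription of the complex polar decomposition of a measure, and the only points where a specifically quaternionic input is needed are the finiteness and elementary properties of $|\mu|$ from Corollary~\ref{VarProp} and the quaternionic Radon--Nikod\`ym theorem (Theorem~\ref{RadonNikodym}) together with its variation identity~\eqref{VarIdentitiy}, all already established. (One should merely note the harmless notational slip in the statement, where the function is called $f$ in the existential clause but $h$ in the formula $|h(x)|=1$ and in $\mu(A)=\int_A h\,d|\mu|$, and use a single symbol throughout.)
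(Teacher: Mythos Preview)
Your argument is correct and is precisely the intended one: the paper states this result as an unproved corollary of Theorem~\ref{RadonNikodym} and the variation identity~\eqref{VarIdentitiy}, and you have simply written out the standard polar-decomposition argument (\`a la \cite[Theorem~6.12]{rudin}) that the paper leaves implicit. Your observation about the $f$/$h$ notational slip in the statement is also accurate.
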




Let $V$ be a quaternionic Banach space and let $\nu$ be a positive measure. Recall that $V$ is also a real Banach space  if we restrict the scalar multiplication to the real numbers. Moreover, recall that $\hh$ itself is a quaternionic Banach space.

Let $\nu$ be a positive measure. Recall that in Bochner's integration theory, a function $f$ with values in $V$ is called {\em $\nu$-measurable} if there exists a sequence of functions $f_n(x) =\sum_{i=1}^n a_i\indi_{A_i}(x)$, where $a_i\in V$ and $\indi_{A_i}$ is the characteristic function of a measurable set $A_i$, such that $f_n(x) \to f(x)$ as $n\to\infty$ for $\nu$-almost all $x\in\Omega$. Pettis measurability theorem gives an equivalent condition to $\nu$-measurability.

\begin{theorem}Let $V$ be a real Banach space and let $\nu$ be a positive measure. A function with values in $X$ is $\nu$-measurable if and only if
\begin{enumerate}[(i)]
\item there exist a $\nu$-zero set $N$ and a separable subspace $B$ of $V$ such that $f(\Omega\setminus N)\subset B$ and
\item $x\mapsto \langle v^\ast,f(x)\rangle$ is measurable for any $v^\ast$ in the topological dual space of the real Banach space $X$.
\end{enumerate}
\end{theorem}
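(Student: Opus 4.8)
This is the classical Pettis measurability theorem, and the plan is to treat the two implications separately.

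For necessity, I would start from a sequence of simple functions $f_n=\sum_i a_i^{(n)}\indi_{A_i^{(n)}}$ converging to $f$ outside a $\nu$-null set $N$. Since the countable set $\bigcup_n\{a_i^{(n)}\}$ spans a closed separable subspace $B$ of $V$, and $B$ contains every pointwise limit of its elements, we get $f(\Omega\setminus N)\subset B$, which is (i). For (ii), fix a functional $v^\ast$; then $x\mapsto\langle v^\ast,f_n(x)\rangle$ is simple, hence measurable, and it converges to $x\mapsto\langle v^\ast,f(x)\rangle$ off $N$, so the limit is measurable.

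For sufficiency, assume (i) and (ii). After removing $N$ and replacing $V$ by the separable subspace $B$ (every functional on $B$ extends to $V$ by Hahn--Banach, so $f$ stays weakly measurable), I may assume that $V$ is separable. The crucial step is to choose a countable \emph{norming} family $(v_k^\ast)$: take $(x_k)$ dense in the unit sphere of $V$ and, by Hahn--Banach, functionals $v_k^\ast$ with $\|v_k^\ast\|=1$ and $\langle v_k^\ast,x_k\rangle=1$; a short estimate using density then gives $\|x\|=\sup_k|\langle v_k^\ast,x\rangle|$ for every $x\in V$. Consequently, for any fixed $a\in V$ the map $x\mapsto\|f(x)-a\|=\sup_k|\langle v_k^\ast,f(x)\rangle-\langle v_k^\ast,a\rangle|$ is a countable supremum of measurable functions, hence measurable. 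With $(a_k)_{k\in\nn}$ dense in $V$, the sets $A_{n,k}=\{x:\|f(x)-a_k\|<1/n\}$ are therefore measurable and cover $\Omega$; disjointifying them into a measurable partition $(B_{n,k})_k$ of $\Omega$ and setting $g_n=\sum_k a_k\indi_{B_{n,k}}$ yields countably-valued measurable functions with $\|g_n(x)-f(x)\|\leq 1/n$ for all $x$, so $g_n\to f$ uniformly. Finally, since the measures occurring in this paper are finite (namely $\nu=|\mu|$ for a quaternionic measure $\mu$), I would truncate each $g_n$ to a genuine simple function $s_n$ agreeing with $g_n$ off a set of $\nu$-measure $<2^{-n}$; the Borel--Cantelli lemma then gives $s_n\to f$ $\nu$-almost everywhere, so $f$ is $\nu$-measurable.

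The part I expect to be the real obstacle is the sufficiency direction, and inside it the construction of the countable norming set of functionals --- this is where separability is genuinely used, and it is precisely what promotes weak measurability (condition (ii)) to measurability of the scalar functions $x\mapsto\|f(x)-a\|$. Once that is available, the remaining dense-net construction of simple approximants is routine, the only minor subtlety being the passage from countably-valued to finitely-valued functions, which is handled using finiteness (or $\sigma$-finiteness) of $\nu$ together with Borel--Cantelli.
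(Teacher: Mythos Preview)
The paper does not actually prove this theorem: it is simply recalled as the classical Pettis measurability theorem, with no argument given. So there is no ``paper's own proof'' to compare against.

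Your proposal is a correct rendering of the standard proof. The necessity direction is fine as written. In the sufficiency direction, the key step---producing a countable norming family $(v_k^\ast)$ from separability of $B$ via Hahn--Banach, and thereby showing that $x\mapsto\|f(x)-a\|$ is measurable as a countable supremum---is exactly the heart of the matter, and you have identified it correctly.

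One small remark on the final step. You pass from the countably-valued functions $g_n$ to genuine simple functions by truncation plus Borel--Cantelli, which forces you to assume $\nu$ is finite (or $\sigma$-finite). This is adequate for the paper's applications, as you note, but it is not needed for the theorem as stated. A cleaner route avoids the detour through countably-valued functions: with $(a_k)$ dense in $B$, define directly $s_n(x)=a_{k}$ where $k\leq n$ is the least index with $\|f(x)-a_k\|=\min_{j\leq n}\|f(x)-a_j\|$. Each $s_n$ is finitely-valued with measurable level sets (these are cut out by finitely many inequalities between the measurable functions $x\mapsto\|f(x)-a_j\|$), and $s_n(x)\to f(x)$ for every $x\in\Omega\setminus N$ by density. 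This gives the conclusion for an arbitrary positive measure $\nu$, with no appeal to finiteness or Borel--Cantelli.
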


\begin{lemma}
Let $X$ be a quaternionic Banach space and let $\nu$ be a positive measure on $(\Omega,\mathcal A)$. If $f:\Omega\to X$ and $g:\Omega\to\hh$ are $\nu$-measurable, then the functions $fg$ and $gf$ are $\nu$-measurable.
\end{lemma}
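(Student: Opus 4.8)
The plan is to argue directly from the definition of $\nu$-measurability, namely that a function into $X$ is $\nu$-measurable precisely when it is the $\nu$-almost everywhere pointwise limit of a sequence of $X$-valued simple functions. First I would use the hypothesis to fix a sequence of $X$-valued simple functions $f_n=\sum_i a_i^{(n)}\indi_{A_i^{(n)}}$ (with $a_i^{(n)}\in X$) and a sequence of $\hh$-valued simple functions $g_n=\sum_j b_j^{(n)}\indi_{B_j^{(n)}}$ (with $b_j^{(n)}\in\hh$) such that $f_n\to f$ and $g_n\to g$ $\nu$-almost everywhere. Then $f_ng_n=\sum_{i,j}\big(a_i^{(n)}b_j^{(n)}\big)\indi_{A_i^{(n)}\cap B_j^{(n)}}$, where the coefficients are formed using the right scalar multiplication of $X$, is again an $X$-valued simple function, and symmetrically $g_nf_n=\sum_{i,j}\big(b_j^{(n)}a_i^{(n)}\big)\indi_{A_i^{(n)}\cap B_j^{(n)}}$, formed using the left scalar multiplication, is an $X$-valued simple function.

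Next I would check the pointwise convergence off a $\nu$-null set. Away from the union of the two exceptional null sets one has $f_n(x)\to f(x)$ in $X$ and $g_n(x)\to g(x)$ in $\hh$, hence in particular $\sup_n\|f_n(x)\|<\infty$ and $\sup_n|g_n(x)|<\infty$. The triangle inequality together with the compatibility of the norm of $X$ with quaternionic multiplication gives
\[\|f_n(x)g_n(x)-f(x)g(x)\|\leq\|f_n(x)\|\,|g_n(x)-g(x)|+\|f_n(x)-f(x)\|\,|g(x)|\longrightarrow 0\]
and likewise
\[\|g_n(x)f_n(x)-g(x)f(x)\|\leq|g_n(x)|\,\|f_n(x)-f(x)\|+|g_n(x)-g(x)|\,\|f(x)\|\longrightarrow 0.\]
Thus $f_ng_n\to fg$ and $g_nf_n\to gf$ $\nu$-almost everywhere, and since each $f_ng_n$ and each $g_nf_n$ is an $X$-valued simple function, both $fg$ and $gf$ are $\nu$-measurable.

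I do not expect a genuine obstacle here; the only delicate point is purely bookkeeping, namely that $f_ng_n$ must be recognised as a simple function in the precise sense used in the definition. It is a finite linear combination of characteristic functions of the measurable sets $A_i^{(n)}\cap B_j^{(n)}$ with coefficients in $X$, so after relabelling the finitely many index pairs it has the admissible form, the number of terms being allowed to grow with $n$. Should one wish to avoid this altogether, the Pettis measurability theorem provides a clean alternative: $fg$ is essentially separably valued because, if $B$ is a separable subspace carrying the essential range of $f$, then the values of $fg$ lie (off a null set) in the separable space $B+Be_1+Be_2+Be_3$; and for $v^\ast$ in the real dual of $X$ the function $x\mapsto\langle v^\ast,f(x)g(x)\rangle=\sum_{k=0}^3 g_k(x)\,\langle(\cdot\,e_k)^\ast v^\ast,f(x)\rangle$, where $g=\sum_{k=0}^3 g_k e_k$ with real measurable components $g_k$ and $(\cdot\,e_k)^\ast$ is the adjoint of the bounded real-linear map $v\mapsto ve_k$, is measurable as a finite sum of products of measurable functions. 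The same reasoning handles $gf$, so in either approach the lemma follows with no hard step.
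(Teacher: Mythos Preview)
Your primary argument is correct and more elementary than the paper's: you work directly from the definition of $\nu$-measurability via simple functions, observing that the product of an $X$-valued simple function with an $\hh$-valued simple function is again an $X$-valued simple function, and then using joint continuity of the scalar multiplication to pass to the almost-everywhere limit. This avoids any appeal to the dual space.

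The paper instead proves the lemma via the Pettis measurability theorem, which is precisely the alternative route you sketch at the end. It decomposes $g=g_0+\sum_{i=1}^3 g_i e_i$ into real measurable components, notes that each $x\mapsto f(x)e_i$ is $\nu$-measurable (by the trivial simple-function argument), and then checks that $\langle v^\ast, fg\rangle=\sum_i\langle v^\ast, fe_i\rangle g_i$ is measurable for every real-linear functional $v^\ast$; essential separability comes from $f(x)g(x)\in B+Be_1+Be_2+Be_3$. Your formulation with $(\cdot\,e_k)^\ast v^\ast$ is the same computation in adjoint notation. The trade-off is that the paper's route makes the separable range explicit (useful later when one wants to invoke Bochner integrability), whereas your direct approach is shorter and self-contained but leaves the separability implicit in the simple-function approximation.
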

\begin{proof}
Since $f$ is $\nu$-measurable, there exist a set $N\subset\Omega$ with $\nu(N) = 0$ and a separable $\rr$-linear subspace $B$ of $X$ such that $f(\Omega\setminus N) \subset B$. Let $g = g_0 + \sum_{i=1}^3 g_i e_i$ where the $g_i:\Omega \to \rr$ are the real-valued component functions of $g$. Since $g$ is $\mu$-measurable, its components are measurable. For any continuous $\rr$-linear functional $v^\ast: X\to \rr$, we have
\[\langle v^\ast, fg\rangle = \langle v^\ast, f\rangle g_0  + \langle v^\ast, fe_1\rangle g_1  + \langle v^\ast, fe_2\rangle g_2  + \langle v^\ast, fe_3\rangle g_3.\]

If $f_n$ is a sequence of $\mathcal A$-simple functions such that $f_n(x)\to f(x)$ for $\nu$-almost all ${x\in\Omega}$, then $f_n(x)e_i$ is a sequence of $\mathcal A$-simple functions with $f_n(x)e_i \to f(x)e_i$ for $\nu$-almost all ${x\in\Omega}$. Therefore, each of the mappings $x\mapsto f(x)e_i,i=1,\ldots,3$ is $\mu$-measurable. The Pettis measurability implies that the mapping $x\mapsto \langle v^\ast, f(x)\rangle$ and the mappings $x\mapsto \langle v^\ast, f(x)e_i\rangle$ for  $i=1,\ldots,3$ are measurable. Consequently, the mapping $x\mapsto \langle v^\ast, f(x)g(x)\rangle$ is measurable, since it consists of the products and the sum of measurable functions. Moreover, $f(x)g(x)\in B + Be_1 + Be_2 + B e_3$ for any $x\in\Omega\setminus N$. This space is separable because $B$ is separable.  Since $\nu(N) = 0$ and since $v^\ast$ was arbitrary, the mapping $x\mapsto f(x)g(x)$ is $\mu$-measurable by the Pettis measurability theorem.

Similarly, we obtain that  $\nu$-measurability of $gf$.

\end{proof}
Let $\nu$ be a positive measure on a $(\Omega,\mathcal A)$. Recall that a $\nu$-measurable function on $\Omega$ with values in a real Banach space is Bochner-integrable, if and only if $\int_{\Omega} \| f\| d\mu <\infty$.
\begin{definition}{\rm
Let $V$ be a quaternionic Banach space, let $\mu\in \meas(\Omega,\mathcal A,\hh)$ and let $h:\Omega\to\hh$ be the function with $|h| = 1$  such that $\mu = h\,d|\mu|$. We call two $\mu$-measurable functions $f:\Omega\to X$  and $g:\Omega\to \hh$ a {\em $\mu$-integrable pair}, if
\[\int_{\Omega} \|f\|\|g\|\,d|\mu| <\infty.\]
In this case, we define
 \begin{equation}\label{IntDefi}\int_{\Omega}f\,d\mu\, g = \int_{\Omega}fhg\,d|\mu| \quad\text{and}\quad \int_{\Omega}g\,d\mu\, f = \int_{\Omega}ghf\,d|\mu|,\end{equation}
as the integrals of a function with values in a real Banach space in the sense of Bochner.}
\end{definition}

\begin{remark}
Note that in the definition of the integral in \eqref{IntDefi}, we can replace the variation of $\mu$ by any $\sigma$-finite positive measure $\nu$ with $\mu\ll\nu$. If $h_\nu$ is the density of $\mu$  with respect to $\nu$ and  $\rho_{|\mu|}$ and $\rho_\nu$ are the real-valued densities of $|\mu|$ and $\nu$ with respect to $|\mu| +\nu$. Then \[\mu = h |\mu| = h\rho_{|\mu|} (|\mu| + \nu) \quad\text{and}\quad\mu = h_\nu \nu = h_\nu\rho_{\nu} (|\mu| + \nu).\]
Theorem~\ref{RadonNikodym} implies $h\rho_{|\mu |} = h_{\nu}\rho_\nu$ in $L^1(|\mu| + \nu)$. Therefore
\begin{align*}
\int_{\Omega} f h_{\nu}g\, d\nu &= \int_{\Omega} \int_{\Omega} f h_{\nu}g \rho_{\nu}\, d(|\mu|+\nu) =  \int_{\Omega} \int_{\Omega} f h_{\nu} \rho_{\nu}g\, d(|\mu|+\nu)\\
&= \int_{\Omega} f h\rho_{|\mu|} g \,d(|\mu|+\nu) =  \int_{\Omega} f h g \rho_{|\mu|}\, d(|\mu|+\nu) = \int_{\Omega} f h g\, d|\mu| = \int_{\Omega} f\,d\mu\, g.
\end{align*}
Hence, the integral is linear in the measure: if $\mu,\nu\in\meas(\Omega,\mathcal{A},\hh)$ then $\mu$ and $\nu$ are absolutely continuous with respect to $\tau = |\mu|+|\nu|$. If $\rho_\mu$ and $\rho_\nu$ are the densities of $\mu$ and $\nu$ with respect to $\tau$, then
\[\int_{\Omega}f\, d(\mu + \nu)\, g = \int_{\Omega} f(\rho_\mu + \rho_\nu) g\, d\tau = \int_{\Omega} f \rho_\mu g \,d\tau + \int_{\Omega} f\rho_\nu g\, d\tau =  \int_{\Omega} f\,d\mu\, g + \int_{\Omega} f\,d\nu\, g .\]
Similarly, if $a\in\hh$ and $\mu =  \rho |\mu|$ then $a\mu  = a \rho  |\mu|$  and so
\[\int_{\Omega} f\, d(a\mu )\, g = \int_{\Omega} f (a\rho)  g\, d|\mu| = \int_{\Omega} (f a)\rho  g\, d|\mu| = \int_{\Omega} (fa)\,d\mu\, g.\]
In the same way, one can see that \(\int_{\Omega}f\,d(\mu a)g = \int_{\Omega}f\, d\mu\, (ag)\).
\end{remark}
As for complex measures, it is possible to define the product measure of two quaternionic measures.
\begin{lemma}\label{LemProdMeas}
Let $\mu\in \meas(\Omega, \mathcal A,\hh )$ and $\nu\in \meas(\varUpsilon,\mathcal B, \hh )$. Then there exists a unique measure $\mu\times\nu$  on the product measurable space $(\Omega\times\varUpsilon,\mathcal A\otimes\mathcal B)$ such that
\begin{equation}\label{prodMeas}\mu\times\nu(A\times B) = \mu(A)\nu(B)\end{equation}
for all $A\in\mathcal A, B\in\mathcal B$. We call $\mu\times\nu$  the {\em product measure} of $\mu$ and $\nu$.
\end{lemma}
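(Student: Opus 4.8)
The plan is to reduce the construction to the classical product-measure theorem for finite real measures, and to obtain uniqueness from the fact that the measurable rectangles form a $\pi$-system generating $\mathcal A\otimes\mathcal B$.

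For uniqueness, suppose $\lambda_1$ and $\lambda_2$ are two quaternionic measures on $(\Omega\times\varUpsilon,\mathcal A\otimes\mathcal B)$ both satisfying \eqref{prodMeas}. I would note that the family $\mathcal R$ of rectangles $A\times B$ with $A\in\mathcal A$, $B\in\mathcal B$ is a $\pi$-system, since $(A_1\times B_1)\cap(A_2\times B_2)=(A_1\cap A_2)\times(B_1\cap B_2)$, that it contains $\Omega\times\varUpsilon$, and that $\sigma(\mathcal R)=\mathcal A\otimes\mathcal B$ by the very definition of the product $\sigma$-algebra. Then I would check that $\mathcal D:=\{C\in\mathcal A\otimes\mathcal B:\lambda_1(C)=\lambda_2(C)\}$ is a Dynkin system: it contains $\Omega\times\varUpsilon$; it is closed under proper differences, because $\lambda_i(C_2\setminus C_1)=\lambda_i(C_2)-\lambda_i(C_1)$ for $C_1\subset C_2$ by finite additivity and finiteness (the latter from Corollary~\ref{VarProp}); and it is closed under increasing countable unions, by continuity from below of the finite measures $\lambda_i$. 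Since $\mathcal R\subset\mathcal D$ by hypothesis, Dynkin's $\pi$--$\lambda$ theorem gives $\mathcal A\otimes\mathcal B=\sigma(\mathcal R)\subset\mathcal D$, i.e. $\lambda_1=\lambda_2$.

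For existence, I would pass to real components. Write $\mu=\sum_{i=0}^3\mu^{(i)}e_i$ and $\nu=\sum_{j=0}^3\nu^{(j)}e_j$, where $\mu^{(i)}\colon\mathcal A\to\rr$ and $\nu^{(j)}\colon\mathcal B\to\rr$ are finite real signed measures; finiteness holds because $|\mu^{(i)}|\le|\mu|$ and $|\nu^{(j)}|\le|\nu|$, which are finite by Corollary~\ref{VarProp}. Applying the classical product-measure theorem to the positive and negative parts of the Jordan decompositions of $\mu^{(i)}$ and $\nu^{(j)}$, each pair admits a finite real product measure $\mu^{(i)}\times\nu^{(j)}$ on $\mathcal A\otimes\mathcal B$ with $(\mu^{(i)}\times\nu^{(j)})(A\times B)=\mu^{(i)}(A)\nu^{(j)}(B)$. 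I would then set
\[\mu\times\nu:=\sum_{i,j=0}^3\bigl(\mu^{(i)}\times\nu^{(j)}\bigr)e_ie_j,\]
which is a quaternionic measure, being a finite $\rr$-linear combination of countably additive real-valued set functions with constant coefficients $e_ie_j\in\hh$ (interchanging the finite sum over $i,j$ with a countable disjoint union in the $\sigma$-additivity check is harmless, as the relevant real series converge absolutely). Since $\mu^{(i)}(A)$ and $\nu^{(j)}(B)$ are real, hence central in $\hh$,
\[(\mu\times\nu)(A\times B)=\sum_{i,j=0}^3\mu^{(i)}(A)\nu^{(j)}(B)e_ie_j=\Bigl(\sum_{i=0}^3\mu^{(i)}(A)e_i\Bigr)\Bigl(\sum_{j=0}^3\nu^{(j)}(B)e_j\Bigr)=\mu(A)\nu(B),\]
establishing \eqref{prodMeas}. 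An alternative would be to split $\mu$ and $\nu$ into $\C_I$-valued complex measures as in Remark~\ref{MSplit} and invoke the classical complex product measure, but then one has to keep track of the conjugations coming from $zJ=J\overline z$, so the real-component route is cleaner.

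I do not expect a serious obstacle; the one point that genuinely needs attention is that the prescribed value $\mu(A)\nu(B)$ lives in the \emph{non-commutative} ring $\hh$. This is precisely why the construction must be assembled from the scalar component measures, where the imaginary units enter only as constants, rather than by running a Carath\'eodory-type extension directly with quaternion-valued premeasures.
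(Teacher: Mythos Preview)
Your argument is correct, but it proceeds along a different decomposition from the paper's. For existence, the paper writes $\mu=\mu_1+J\mu_2$ with $\mu_1,\mu_2$ taking values in $\cc_I$ and, asymmetrically, $\nu=\nu_1+\nu_2J$ with the $J$ on the \emph{right}; it then assembles $\mu\times\nu$ from the four classical complex product measures $\mu_i\times\nu_j$. That left/right placement of $J$ is precisely the trick that avoids the conjugation bookkeeping you anticipated, so the complex route is in fact no messier than your real one. Your construction via the sixteen real products $\mu^{(i)}\times\nu^{(j)}$ is equally valid and has the virtue of invoking only the product of finite positive measures together with the centrality of real scalars in $\hh$; the paper's version is shorter on the page but leans on the complex product-measure theorem as a black box. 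For uniqueness, the paper again reduces to the complex case and quotes that two complex measures on $\mathcal A\otimes\mathcal B$ agreeing on rectangles must coincide, whereas your direct Dynkin-system argument is self-contained and would apply verbatim to measures valued in any finite-dimensional real vector space.
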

\begin{proof}
Let $I,J\in\mathbb S$ with $I\perp J$ and let $\mu = \mu_1 + J\mu_2 $ with $\mu_1,\mu_2\in M (\Omega, \mathcal A, \cc _I)$ and  $\nu = \nu_1 + \nu_2 J$ with $\nu_1,\nu_2\in \meas(\varUpsilon,\mathcal B, \cc _I)$. Then, there exist unique complex product measures $\mu_i\times\nu_j\in M (\Omega_1\times\Omega_2,\mathcal A\otimes\mathcal B,\cc _I)$ of $\mu_i$ and $\nu_i$, $i,j =1,2$. If we set
$$\mu\times\nu = \mu_1\times\nu_1 + J\mu_2\times\nu_1 + \mu_1\times\nu_2J + J\mu_2\times\nu_2 J,$$
then $\mu\times\nu$ is a quaternionic measure on $(\Omega\times\varUpsilon,\mathcal A\otimes\mathcal B)$ and
\begin{align*}
\mu(A)\nu(B) &= \mu_1(A)\nu_1(B) + J\mu_2(A)\nu_1(B) + \mu_1(A)\nu_2(B)J + J \mu_2(A)\nu_2(B)J&\\
 &= \mu_1\times\nu_1(A\times B) + J\mu_2\times\nu_1(A\times B) + \mu_1\times\nu_2(A\times B)J + J\mu_2\times\nu_2(A\times B) J \\
 &= \mu\times\nu(A\times B).
\end{align*}

In order to prove the uniqueness of the product measure, assume that two quaternionic measures $\rho = \rho_1 + \rho_2J$ and $\tau = \tau_1 + \tau_2 J$ on $(\Omega\times \varUpsilon, \mathcal A\times \mathcal B)$ satisfy $\rho(A\times B)=\tau(A\times B)$ whenever $A\in\mathcal A$ and $B\in\mathcal B$. Then $\rho_1(A\times B) =\tau_1(A\times B)$ and $\rho_2(A\times B) = \tau_2(A\times B)$ for $A\in \mathcal A$ and $B\in\mathcal B$. Since two complex measures on the product space $(\Omega\times\varUpsilon,\mathcal A\otimes\mathcal B)$ are equal if and only if they coincide on sets of the form $A\times B$, we obtain $\rho_1 = \tau_1$ and $\rho_2 = \tau_2$ and, in turn, $\rho = \rho_1 + \rho_2 J = \tau_1 + \tau_2 J = \tau$. Therefore, $\mu\times\nu$ is uniquely determined by \eqref{prodMeas}.

\end{proof}

\begin{remark}
Note that it is also possible to define a commutative product measure $\mu\times_c\nu$ that satisfies
\[\mu\times_c\nu(A\times B) = \nu(B)\mu(A),\quad \forall A\in\mathcal A, B\in\mathcal B.\]
This measure is different from the measure $\nu\times\mu$ that satisfies
\[\nu\times\mu(B\times A) = \nu(B)\mu(A),\quad \forall B\in\mathcal B,  A\in\mathcal A.\]
\end{remark}

\begin{lemma}\label{ProdVar}
Let $(\Omega, \mathcal A, \mu)$ and $(\varUpsilon, \mathcal B, \nu)$ be quaternionic measure spaces. Then$$|\mu\times\nu| = |\mu|\times |\nu|.$$ Moreover, if $\mu = f\,d|\mu|$ and $\nu = g\,d|\nu|$ as in Corollary~\ref{ACVariation}, then for any $C\in\mathcal A\times\mathcal B$
\[\mu\times\nu(C) = \int_{C}f(s)g(t)\, d|\mu\times \nu|(s,t).\]
\end{lemma}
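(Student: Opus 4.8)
The plan is to exhibit an explicit Radon--Nikod\'ym density of $\mu\times\nu$ with respect to the classical product $|\mu|\times|\nu|$ of the (finite, positive) total variations, and then to extract both assertions from it. Fix $I,J\in\S$ with $I\perp J$; by Corollary~\ref{ACVariation} there are $\mathcal A$-measurable $f\colon\Omega\to\hh$ and $\mathcal B$-measurable $g\colon\varUpsilon\to\hh$ with $|f|\equiv1$, $|g|\equiv1$, $\mu=f\,d|\mu|$ and $\nu=g\,d|\nu|$. The candidate density is $\varphi\colon\Omega\times\varUpsilon\to\hh$, $\varphi(s,t)=f(s)g(t)$. It is $\mathcal A\otimes\mathcal B$-measurable, being the composite of the measurable map $(s,t)\mapsto(f(s),g(t))$ with quaternionic multiplication, which is continuous; and $|\varphi(s,t)|=|f(s)|\,|g(t)|=1$ everywhere. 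Since $|\mu|\times|\nu|$ is a finite positive measure, namely $(|\mu|\times|\nu|)(\Omega\times\varUpsilon)=|\mu|(\Omega)\,|\nu|(\varUpsilon)<\infty$ by Corollary~\ref{VarProp}, we have $\varphi\in L^1(|\mu|\times|\nu|,\hh)$, so the set function
\[
\lambda(C):=\int_C \varphi(s,t)\,d(|\mu|\times|\nu|)(s,t),\qquad C\in\mathcal A\otimes\mathcal B,
\]
is a well-defined quaternionic measure (countable additivity follows from dominated convergence, componentwise).

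Next I would check that $\lambda$ agrees with $\mu\times\nu$ on measurable rectangles. Write $f=\sum_{i=0}^3 f_ie_i$ and $g=\sum_{j=0}^3 g_je_j$ with real-valued component functions; since $|f_i|\le1$, $|g_j|\le1$ and the measures are finite, all $f_i\in L^1(|\mu|)$ and $g_j\in L^1(|\nu|)$. Then $\varphi(s,t)=\sum_{i,j}f_i(s)g_j(t)\,e_ie_j$, and for each pair $(i,j)$ the ordinary Fubini theorem applied to the bounded measurable real function $(s,t)\mapsto f_i(s)g_j(t)$ on the finite product space gives
\[
\int_{A\times B} f_i(s)g_j(t)\,d(|\mu|\times|\nu|)(s,t)=\Big(\int_A f_i\,d|\mu|\Big)\Big(\int_B g_j\,d|\nu|\Big).
\]
The real numbers $\alpha_i:=\int_A f_i\,d|\mu|$ and $\beta_j:=\int_B g_j\,d|\nu|$ commute with all basis quaternions, so summing the previous identity against $e_ie_j$ yields
\[
\lambda(A\times B)=\sum_{i,j}\alpha_i\beta_j e_ie_j=\Big(\sum_i\alpha_i e_i\Big)\Big(\sum_j\beta_j e_j\Big)=\Big(\int_A f\,d|\mu|\Big)\Big(\int_B g\,d|\nu|\Big)=\mu(A)\nu(B)
\]
for all $A\in\mathcal A$, $B\in\mathcal B$. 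By the uniqueness part of Lemma~\ref{LemProdMeas}, $\lambda=\mu\times\nu$, i.e.
\[
\mu\times\nu(C)=\int_C f(s)g(t)\,d(|\mu|\times|\nu|)(s,t)\qquad\text{for all }C\in\mathcal A\otimes\mathcal B.
\]

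In particular $\mu\times\nu\ll|\mu|\times|\nu|$ with density $\varphi$, so the variation identity~\eqref{VarIdentitiy} of Theorem~\ref{RadonNikodym} gives $|\mu\times\nu|(C)=\int_C|\varphi|\,d(|\mu|\times|\nu|)=(|\mu|\times|\nu|)(C)$ for all $C$, that is $|\mu\times\nu|=|\mu|\times|\nu|$. Substituting this back into the displayed formula yields $\mu\times\nu(C)=\int_C f(s)g(t)\,d|\mu\times\nu|(s,t)$, which is the second claim. The only genuinely delicate point is the passage from rectangles to all of $\mathcal A\otimes\mathcal B$: one must know the product measure is determined by its values on rectangles, which is precisely Lemma~\ref{LemProdMeas}, and one must respect the noncommutativity of $\hh$ — this is handled by splitting $f$ and $g$ into real components so that the classical commutative Fubini theorem applies, and reassembling the quaternion units only at the end, where they multiply on the correct sides. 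Everything else is a routine verification.
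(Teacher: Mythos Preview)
Your proof is correct and follows essentially the same route as the paper: define the candidate density $\varphi(s,t)=f(s)g(t)$ with respect to $|\mu|\times|\nu|$, verify via Fubini and the uniqueness in Lemma~\ref{LemProdMeas} that the resulting measure coincides with $\mu\times\nu$, and then read off both $|\mu\times\nu|=|\mu|\times|\nu|$ and the density formula from~\eqref{VarIdentitiy}. The only difference is cosmetic---you justify the Fubini step by splitting $f$ and $g$ into real components, whereas the paper applies Fubini for positive measures directly to the $\hh$-valued integrand; also, the imaginary units $I,J$ you fix at the outset are never used and can be dropped.
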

\begin{proof}
Let $f\colon\Omega\to\hh $ and  $g\colon \varUpsilon\to\hh $ with $|f| = 1$ and $|g|=1$ be functions as in Corollay~\ref{ACVariation}  such that $\mu(A) = \int_A f(t)\,d|\mu|(t)$ and $\nu(B) = \int_B g(s)\, d|\nu|(s)$ for all $A\in\mathcal A$ and $ B\in\mathcal B$. Moreover, let $r=(t,s)$ and let $h(r) = f(t)g(s)$. Then $C \mapsto \int_C h(r)\,d|\mu|\times|\nu|(r)$ defines a measure on $(\Omega\times\varUpsilon, \mathcal A\times\mathcal B)$ and Fubini's theorem for positive measures implies
\begin{align*}\int_{A\times B}h(r)\,d|\mu|\times|\nu|(r) &= \int_{A}\int_Bf(t)g(s)\,d|\mu|(t)\,d|\nu|(s) \\
&= \int_{A}f(t)\,d|\mu|(t)\int_Bg(s)\,d|\nu|(s) = \mu(A)\nu(B).\end{align*}
The uniqueness of the product measure implies
$\mu\times\nu (C) = \int_C h(r) \,d \big|\mu|\times|\nu|(r)$ for any $C\in\mathcal A\times\mathcal B$. Since $|h| = |f|\,|g| = 1$, we deduce from \eqref{VarIdentitiy} that
$$ |\mu\times\nu|(C) = \int_C |h|\,d|\mu|\times|\nu|(r) = |\mu|\times|\nu|(C)$$
for all $C\in \mathcal{A}\times\mathcal B$.
\end{proof}

\begin{lemma}\label{LemImagInt}
Let $(\Omega,\mathcal A,\mu)$ be a quaternionic measure space, let $(\varUpsilon,\mathcal B)$ be a measurable space and let $\phi: \Omega\to\varUpsilon$ be a measurable function. If a function $f:\varUpsilon\to X$ with values in a quaternionic Banach space $X$ is integrable with respect to $\mu^\phi$ and $f\circ\phi$ is integrable with respect to $\mu$, then
\begin{equation} \label{ImagInt}\int_{\varUpsilon} f \,d\mu^{\phi} = \int_{\Omega}f\circ\phi\, d\mu.\end{equation}
\end{lemma}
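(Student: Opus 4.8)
The plan is to reduce \eqref{ImagInt} to the classical image--measure theorem for \emph{positive} measures and then recover the general quaternionic case from linearity. The two structural facts I would exploit are that, by the Remark following \eqref{IntDefi}, the integral $f\mapsto\int f\,d\mu$ is linear in the measure $\mu$ and satisfies $\int f\,d(a\mu)=\int (fa)\,d\mu$, and that the push--forward respects this structure: $(\mu+\nu)^\phi=\mu^\phi+\nu^\phi$, $(a\mu)^\phi=a\,\mu^\phi$, $(\mu a)^\phi=\mu^\phi a$, all immediate from $\mu^\phi(B)=\mu(\phi^{-1}(B))$. Accordingly I would write $\mu=\sum_{l=0}^3\mu_le_l$ with real measures $\mu_l$ and split each $\mu_l$ into its Jordan parts, obtaining $\mu=\sum_m c_m\nu_m$ with constants $c_m\in\hh$ and finite positive measures $\nu_m$; note $\nu_m\le|\mu|$, because every real component of a quaternionic measure and each of its Jordan parts is dominated by the total variation.

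Next I would prove the lemma for a single finite positive measure $\nu$, where $\nu^\phi$ is again a finite positive measure and \eqref{ImagInt} is an identity of Bochner integrals. For a nonnegative measurable \emph{scalar} $g$ the identity $\int_\varUpsilon g\,d\nu^\phi=\int_\Omega g\circ\phi\,d\nu$ is the usual image--measure theorem, cf.\ \cite{rudin}. For a Banach--valued $g$ that is $\nu^\phi$--integrable with $g\circ\phi$ $\nu$--integrable, the identity holds for $g=a\indi_B$ by the very definition $\nu^\phi(B)=\nu(\phi^{-1}(B))$, hence for all simple $g$ by $\rr$--linearity of the Bochner integral; choosing simple $g_n$ with $\|g_n-g\|_{L^1(\nu^\phi)}\to0$, the error on the $\varUpsilon$--side of $\int g_n\,d\nu^\phi=\int g_n\circ\phi\,d\nu$ is at most $\|g_n-g\|_{L^1(\nu^\phi)}$, while the error on the $\Omega$--side is at most $\int_\Omega\|g_n-g\|\circ\phi\,d\nu=\int_\varUpsilon\|g_n-g\|\,d\nu^\phi$ by the scalar case applied to $\|g_n-g\|$, so both sides pass to the limit.

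To assemble the general case I would first note that applying the positive--measure identity with $\nu=|\mu|$ and the scalar function $\|f\|$ gives $\int_\varUpsilon\|f\|\,d|\mu|^\phi=\int_\Omega\|f\circ\phi\|\,d|\mu|<\infty$; together with $\nu_m\le|\mu|$ this shows each $fc_m$ is $\nu_m^\phi$--integrable and each $(fc_m)\circ\phi=(f\circ\phi)c_m$ is $\nu_m$--integrable (the required $\nu_m^\phi$--measurability of $f$ follows from Pettis' theorem, since $f\circ\phi$ is $|\mu|$--measurable and $\nu_m^\phi\le|\mu|^\phi$). Then, using linearity in the measure, $\int f\,d(c\mu)=\int(fc)\,d\mu$ and $(c\mu)^\phi=c\,\mu^\phi$, I would compute
\[
\int_\varUpsilon f\,d\mu^\phi=\sum_m\int_\varUpsilon (fc_m)\,d\nu_m^\phi=\sum_m\int_\Omega (fc_m)\circ\phi\,d\nu_m=\sum_m\int_\Omega (f\circ\phi)\,d(c_m\nu_m)=\int_\Omega f\circ\phi\,d\mu.
\]
The step I expect to be delicate is exactly this last bookkeeping: since push--forward may create cancellation, $|\mu^\phi|$ and $|\mu|^\phi$ need not coincide, so one cannot simply work with a polar decomposition of $\mu^\phi$, and it is the second hypothesis, ``$f\circ\phi$ integrable with respect to $\mu$'', transported back through the positive--measure case, that supplies the integrability on the $|\mu|^\phi$ side which the decomposition argument needs.
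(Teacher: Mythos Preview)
Your proposal is correct and follows the same overall strategy as the paper: decompose $\mu$ into simpler measures for which the image-measure identity is already available, then reassemble using the linearity of the integral in the measure and the fact that push-forward respects this linearity. The only difference is the level of granularity. The paper splits $\mu=\mu_1+J\mu_2$ into two $\cc_I$-valued complex measures via Remark~\ref{MSplit}, observes $|\mu_i|\le|\mu|$ and $|\mu_i^\phi|\le|\mu^\phi|$ so that both integrability hypotheses descend immediately to each piece, invokes the complex Bochner image-measure theorem as known, and sums the two identities. You go one layer deeper, down to the positive Jordan parts of the four real components, and prove the positive case from scratch. Your route is more self-contained; the paper's is shorter and has slightly cleaner bookkeeping on the $\varUpsilon$ side, because $|\mu_i^\phi|\le|\mu^\phi|$ lets the \emph{first} hypothesis alone control the pushed-forward pieces, whereas your positive parts satisfy only $\nu_m^\phi\le|\mu|^\phi$ (and $|\mu|^\phi$ may strictly dominate $|\mu^\phi|$), which is exactly why you need to transport the \emph{second} hypothesis through the scalar image theorem to obtain $\int_\varUpsilon\|f\|\,d|\mu|^\phi<\infty$---the delicacy you correctly anticipated.
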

\begin{proof}
Let $I,J\in\si$ such that $I\perp J$ and let $\mu_1,\mu_2\in \meas(\Omega,\mathcal A,\cc_I)$ such that $\mu = \mu_1 + \mu_2J$. Then $\mu^\phi = \mu_1^\phi + \mu_2^\phi J$ and $|\mu_i|\leq|\mu|, i=1,2$, which implies $\int_{\Omega}\|f\circ\phi\| d|\mu_i|\leq \int_{\Omega}\|f\circ\phi\| d|\mu| <\infty$. Hence, $f\circ \phi$ is integrable with respect to $\mu_1$ and $\mu_2$ and similarly $f$ is integrable with respect to $\mu_1^\phi$ and $\mu_2^\phi$. Therefore, \eqref{ImagInt} holds true for the complex measures $\mu_1$ and $\mu_2$, and hence
\[ \int_{\varUpsilon} f \,d\mu^{\phi} = \int_{\varUpsilon} f \,d\mu_1^{\phi} + \int_{\varUpsilon} f \,d\mu_2^{\phi} J = \int_{\Omega}f\circ\phi\, d\mu_1 + \int_{\Omega}f\circ\phi\, d\mu_2 J= \int_{\Omega}f\circ\phi\, d\mu.\]

\end{proof}

\begin{definition}{\rm
We denote the Borel sets on $\rr$ by $\borel(\rr)$.}
\end{definition}
We recall that, for any Borel set $E\subset\rr$, the set
\[
P(E):=\{(u,v)\in\rr^2 : u+v\in E \}
\]
is a Borel subset of $\rr^2$.

\begin{definition}\label{DefiConvol}{\rm
Let $\mu,\nu$ be quaternionic measures on $ \mathbf{B}(\rr )$. The convolution $\mu\ast\nu$ of $\mu$ and $\nu$ is the image measure of $\mu\times\nu$ under the mapping $\phi:\rr^2\to\rr, (u,v)\mapsto u+v$, that is,
 \[\mu\ast\nu(E)=\mu\times\nu(P(E))\]
for any $E\in\mathbf{B}(\rr )$.}
\end{definition}
It is immediate to show the following rules.
\begin{corol}
Let $\mu,\nu,\rho\in \meas(\rr , \mathbf{B}(\rr ), \hh )$ and let $a,b\in\hh$. Then
\begin{enumerate}
\item $(\mu + \nu)\ast\rho = \mu\ast \rho + \nu\ast\rho$ and $\mu\ast(\nu+\rho) = \mu\ast\nu + \mu\ast\rho$
\item $(a\mu)\ast\nu = a(\mu\ast\nu)$ and $\mu\ast(\nu a) = \mu\ast\nu a$.
\end{enumerate}
\end{corol}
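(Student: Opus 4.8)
The plan is to reduce all four identities to the corresponding (essentially trivial) statements for the product measure and for the push-forward, both of which are already at our disposal. Recall first that, by Definition~\ref{DefiConvol}, $\mu\ast\nu$ is the image of $\mu\times\nu$ under the measurable map $\phi\colon\rr^2\to\rr$, $\phi(u,v)=u+v$: since $P(E)=\phi^{-1}(E)$, we have $\mu\ast\nu(E)=(\mu\times\nu)^\phi(E)$ for every $E\in\borel(\rr)$. The push-forward of a quaternionic measure is again a quaternionic measure, because $\phi^{-1}$ commutes with countable disjoint unions, and from the defining formula $\lambda^\phi(E)=\lambda(\phi^{-1}(E))$ one reads off immediately that $(\lambda_1+\lambda_2)^\phi=\lambda_1^\phi+\lambda_2^\phi$, $(a\lambda)^\phi=a\,\lambda^\phi$ and $(\lambda a)^\phi=\lambda^\phi a$ for all $\lambda,\lambda_1,\lambda_2\in\meas(\rr)$ and $a\in\hh$.

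Next I would record the analogous bilinearity of the product measure, using the uniqueness clause of Lemma~\ref{LemProdMeas}. For fixed $\rho$, both $(\mu+\nu)\times\rho$ and $\mu\times\rho+\nu\times\rho$ are quaternionic measures on $\rr^2$, and on a measurable rectangle $A\times B$ they both take the value $(\mu+\nu)(A)\rho(B)=\mu(A)\rho(B)+\nu(A)\rho(B)$; hence the uniqueness in Lemma~\ref{LemProdMeas} forces $(\mu+\nu)\times\rho=\mu\times\rho+\nu\times\rho$, and symmetrically $\mu\times(\nu+\rho)=\mu\times\nu+\mu\times\rho$. In the same way $(a\mu)\times\nu$ and $a(\mu\times\nu)$ agree on rectangles, as do $\mu\times(\nu a)$ and $(\mu\times\nu)a$, so $(a\mu)\times\nu=a(\mu\times\nu)$ and $\mu\times(\nu a)=(\mu\times\nu)a$. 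Here one has to keep track of the side on which the scalar sits: since $\hh$ is noncommutative, a left factor coming from the first measure stays on the left and a right factor coming from the second measure stays on the right, and the two cannot be interchanged.

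Combining these two steps yields the corollary. For distributivity, $(\mu+\nu)\ast\rho=((\mu+\nu)\times\rho)^\phi=(\mu\times\rho+\nu\times\rho)^\phi=(\mu\times\rho)^\phi+(\nu\times\rho)^\phi=\mu\ast\rho+\nu\ast\rho$, and likewise $\mu\ast(\nu+\rho)=\mu\ast\nu+\mu\ast\rho$. For the homogeneity, $(a\mu)\ast\nu=((a\mu)\times\nu)^\phi=(a(\mu\times\nu))^\phi=a\,(\mu\times\nu)^\phi=a(\mu\ast\nu)$, and $\mu\ast(\nu a)=(\mu\times(\nu a))^\phi=((\mu\times\nu)a)^\phi=(\mu\times\nu)^\phi a=(\mu\ast\nu)a$.

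There is no genuine obstacle here; the statement is as immediate as the surrounding text claims. The only points that deserve a moment's care are the noncommutative bookkeeping described above and the (purely formal) check that each object being compared really is a quaternionic measure, so that the uniqueness in Lemma~\ref{LemProdMeas} applies. An alternative, slightly longer route would be to fix $I,J\in\si$ with $I\perp J$, write $\mu=\mu_1+J\mu_2$, $\nu=\nu_1+\nu_2J$ and so on with $\cc_I$-valued complex measures, and then deduce everything from the classical complex identities; I would nevertheless prefer the uniqueness argument, as it avoids choosing a splitting.
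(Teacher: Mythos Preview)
Your proposal is correct. The paper does not actually give a proof of this corollary: it is preceded by the sentence ``It is immediate to show the following rules'' and nothing more. Your argument---reducing everything to bilinearity of the product measure (via the uniqueness clause of Lemma~\ref{LemProdMeas}) together with linearity of the push-forward under $\phi(u,v)=u+v$---is exactly the natural way to make this ``immediate'' verification precise, and your remarks on the noncommutative bookkeeping (left scalars from the first factor stay left, right scalars from the second factor stay right) are the only subtlety worth flagging.
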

\begin{corol}\label{VarConvol}
Let $\mu,\nu\in\mathbb \meas(\rr , \mathbf{B}(\rr ), \hh )$. Then the estimate
\[ |\mu\ast\nu|(E) \leq |\mu|\ast |\nu|(E) \]
holds true for all $E\in\mathbf{B}(\rr )$.
\end{corol}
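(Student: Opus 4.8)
The plan is to reduce the inequality to two ingredients: a general estimate comparing the total variation of a pushforward measure with the pushforward of its total variation, and Lemma~\ref{ProdVar}, which identifies $|\mu\times\nu|$ with $|\mu|\times|\nu|$.

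First I would establish the following general fact. If $\lambda\in\meas(\Omega,\mathcal A,\hh)$ and $\phi\colon\Omega\to\varUpsilon$ is $(\mathcal A,\mathcal B)$-measurable, then the pushforward $\lambda^\phi$, defined by $\lambda^\phi(B)=\lambda(\phi^{-1}(B))$, satisfies $|\lambda^\phi|(B)\leq|\lambda|(\phi^{-1}(B))$ for all $B\in\mathcal B$. Indeed, for any countable measurable partition $\{B_i\}_{i}$ of $B$, the preimages $\{\phi^{-1}(B_i)\}_i$ form a countable measurable partition of $\phi^{-1}(B)$, so
\[
\sum_i|\lambda^\phi(B_i)| = \sum_i|\lambda(\phi^{-1}(B_i))| \leq |\lambda|(\phi^{-1}(B)),
\]
and taking the supremum over all such partitions of $B$ gives the claim.

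Next I would apply this with $\lambda=\mu\times\nu$ and $\phi\colon\rr^2\to\rr$, $\phi(u,v)=u+v$, noting that $\phi^{-1}(E)=P(E)$ and that, by Definition~\ref{DefiConvol}, $\mu\ast\nu=(\mu\times\nu)^\phi$, while $|\mu|\ast|\nu|=(|\mu|\times|\nu|)^\phi$ (the convolution of the positive — hence quaternionic, by Corollary~\ref{VarProp} — measures $|\mu|$ and $|\nu|$ being defined by the very same image-measure construction). The general fact then yields, for every $E\in\mathbf{B}(\rr)$,
\[
|\mu\ast\nu|(E) = |(\mu\times\nu)^\phi|(E) \leq |\mu\times\nu|(P(E)),
\]
and Lemma~\ref{ProdVar} gives $|\mu\times\nu|(P(E)) = |\mu|\times|\nu|(P(E)) = |\mu|\ast|\nu|(E)$, which is exactly the asserted inequality.

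I do not expect a genuine obstacle here. The only points requiring care are the correct use of countable additivity and of the disjointness of the preimages $\phi^{-1}(B_i)$ in the variation estimate, and the remark that Definition~\ref{DefiConvol} and Lemma~\ref{ProdVar} apply to the positive measures $|\mu|,|\nu|$ as well, since finite positive measures are in particular quaternionic measures.
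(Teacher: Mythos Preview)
Your proposal is correct and follows essentially the same argument as the paper's proof: the paper takes a partition $\{E_i\}$ of $E$, writes $\sum_i|\mu\ast\nu(E_i)|=\sum_i|\mu\times\nu(P(E_i))|\leq |\mu\times\nu|(P(E))$, takes the supremum, and then invokes Lemma~\ref{ProdVar}. Your ``general fact'' about pushforwards is precisely this inline computation stated abstractly, so the two proofs differ only in presentation.
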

\begin{proof}
Let $E\in\borel(\rr)$ and let $\pi \in\Pi(E)$ be a countable measurable partition of $E$. Then
$$\sum_{E_i\in\pi} |\mu\ast\nu(E_i)| = \sum_{E_i\in\pi} |\mu\times\nu(P(E_i))| \leq \sum_{E_i\in\pi} |\mu\times\nu|(P(E_i))= |\mu\times\nu|(P(E)),$$
and taking the supremum over all possible partitions $\pi\in\Pi(E)$ yields
$$|\mu\ast\nu|(E) \leq |\mu\times\nu|(P(E)) = |\mu|\times |\nu|(P(E)) = |\mu|\ast |\nu|(E).$$
\end{proof}

\begin{corol}\label{ConvolSplit}
Let $\mu,\nu\in \meas(\rr , \mathbf{B}(\rr ), \hh )$ and let $F\colon\rr \to X$ be integrable with respect to $\mu\ast\nu$ and such that $\int_{-\infty}^{+\infty}\int_{-\infty}^{+\infty}\|F(s+t)\|\,d|\mu|(s)\,d|\nu|(t)<\infty$. Then
$$\int_{\rr } F(r)\,d(\mu\ast\nu)(r) = \int_{\rr }\int_{\rr }F(s+t)\,d\mu(s)\,d\nu(t).$$
\end{corol}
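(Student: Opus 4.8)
The plan is to reduce the statement to the change‑of‑variables formula of Lemma~\ref{LemImagInt} followed by the classical Fubini theorem for Bochner integrals applied to the positive product measure $|\mu|\times|\nu|$.

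By Definition~\ref{DefiConvol}, $\mu\ast\nu$ is the image measure of $\mu\times\nu$ under the addition map $\phi:\rr^2\to\rr$, $\phi(s,t)=s+t$. To apply Lemma~\ref{LemImagInt} with this $\phi$ I first check that $F\circ\phi$ is integrable with respect to $\mu\times\nu$: by Lemma~\ref{ProdVar} we have $|\mu\times\nu|=|\mu|\times|\nu|$, and Fubini's theorem for positive measures gives
\[\int_{\rr^2}\|F(s+t)\|\,d|\mu\times\nu|(s,t)=\int_{\rr}\int_{\rr}\|F(s+t)\|\,d|\mu|(s)\,d|\nu|(t)<\infty\]
by hypothesis. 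Since moreover $F$ is integrable with respect to $\mu\ast\nu=(\mu\times\nu)^\phi$ by assumption, Lemma~\ref{LemImagInt} yields
\[\int_{\rr}F(r)\,d(\mu\ast\nu)(r)=\int_{\rr^2}F(s+t)\,d(\mu\times\nu)(s,t).\]

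Next I unfold the right‑hand integral through densities. By Corollary~\ref{ACVariation} write $\mu=f\,d|\mu|$ and $\nu=g\,d|\nu|$ with $|f|\equiv 1$ and $|g|\equiv 1$; by Lemma~\ref{ProdVar} the measure $\mu\times\nu$ then has density $(s,t)\mapsto f(s)g(t)$ with respect to the positive finite measure $|\mu|\times|\nu|$, so by the definition of the integral against a quaternionic measure
\[\int_{\rr^2}F(s+t)\,d(\mu\times\nu)(s,t)=\int_{\rr^2}F(s+t)f(s)g(t)\,d(|\mu|\times|\nu|)(s,t).\]
This integrand maps into the real Banach space $X$ and its norm equals $\|F(s+t)\|\,|f(s)|\,|g(t)|=\|F(s+t)\|$, which is $|\mu|\times|\nu|$‑integrable by hypothesis; hence it is Bochner integrable and the classical Fubini theorem for Bochner integrals applies. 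Integrating in $s$ first and then in $t$, and pulling the constant $g(t)$ out of the inner integral (bounded right multiplication commutes with the Bochner integral), we obtain
\[\int_{\rr^2}F(s+t)f(s)g(t)\,d(|\mu|\times|\nu|)(s,t)=\int_{\rr}\Big(\int_{\rr}F(s+t)f(s)\,d|\mu|(s)\Big)g(t)\,d|\nu|(t),\]
where the inner integral is defined for $|\nu|$‑almost every $t$ (again by Fubini for positive measures and the hypothesis) and depends $|\nu|$‑measurably on $t$, as guaranteed by Fubini's theorem. Finally, the definition of the quaternionic integral gives $\int_\rr F(s+t)f(s)\,d|\mu|(s)=\int_\rr F(s+t)\,d\mu(s)$ for each such $t$, and then $\int_\rr\big(\int_\rr F(s+t)\,d\mu(s)\big)g(t)\,d|\nu|(t)=\int_\rr\big(\int_\rr F(s+t)\,d\mu(s)\big)\,d\nu(t)$; combining these identities proves the claim.

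The only genuinely delicate point is the bookkeeping of integrability and measurability that licenses the classical Fubini step — namely that $F\circ\phi$ is $|\mu|\times|\nu|$‑integrable and that $s\mapsto F(s+t)$ is $\mu$‑integrable for $|\nu|$‑almost every $t$. Both follow at once from the standing hypothesis $\int_\rr\int_\rr\|F(s+t)\|\,d|\mu|(s)\,d|\nu|(t)<\infty$ together with the identity $|\mu\times\nu|=|\mu|\times|\nu|$ of Lemma~\ref{ProdVar} and Fubini's theorem for positive measures; everything else is routine manipulation of Bochner integrals.
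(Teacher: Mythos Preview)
Your proof is correct and follows essentially the same approach as the paper: apply Lemma~\ref{LemImagInt} with $\phi(s,t)=s+t$, pass to the densities $f,g$ with respect to $|\mu|,|\nu|$ via Lemma~\ref{ProdVar}, and then apply Fubini's theorem for the positive measure $|\mu|\times|\nu|$. Your write-up is more explicit than the paper's about the integrability checks needed for Lemma~\ref{LemImagInt} and for the Fubini step, but the underlying argument is the same.
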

\begin{proof}
Because of our assumptions and Definition~\ref{DefiConvol} we can apply Lemma~\ref{LemImagInt} with $\phi(s,t) = s+t$. If $\mu(A) = \int_A f(t)\,d|\mu|(t)$ and $\nu(A) = \int_A g(s)\,d|\nu|(s)$, then the product measures satisfies $\mu\times\nu (B)= \int_B f(s)g(t)\,d|\mu|\times|\nu|(s,t)$ by Lemma~\ref{ProdVar}. Applying Fubini's theorem, we obtain
\begin{align*}
\int_{\rr } F(r)\,d(\mu\ast\nu)(r) &= \int_{\rr } F(\phi(s,t))\ d(\mu\times\nu)(s,t) \\
&= \int_{\rr } F(\phi(s,t)) f(s)g(t)\ d|\mu\times\nu|(s,t) \\
&= \int_{\rr } F(s+t) f(s)g(t)\ d|\mu|(s)\, d|\nu|(t) =\int_{\rr }\int_{\rr }F(s+t)\,d\mu(s)\,d\nu(t).
\end{align*}

\end{proof}

\subsection{The quaternionic Laplace-Stieltjes transform and functions of the generator of a strongly continuous group}
Let $(\Z_T(t))_{t \in \rr}$ be a strongly continuous group of operators on $X$. By Theorem~\ref{GroupGen},  there exist positive constants $M>0$ and $\omega\geq 0$ such that $\|\Z_T(t)\|\leq Me^{\omega |t|}$ and such that the $S$-spectrum of the infinitesimal generator $T$ lies in the strip $-\omega < \Re(s)< \omega$.

Moreover,
\[
S_R^{-1}(s,T)v=\int_0^\infty e^{-ts}\, \Z_T(t) v \,dt,\qquad\Re(s) >\omega
\]
and
\[
S_R^{-1}(s,T)v=-\int_{-\infty}^0 e^{-ts}\,\Z_T(t)v\,dt, \qquad\Re(s) <-\omega.
\]

\begin{definition}{\rm
We denote by $\mathbf{S}(T)$ the family of all quaternionic measures $\mu$  on $\borel(\rr)$ such that
$$ \int_{\rr }d|\mu|(t)\,e^{(\omega+\varepsilon)|t|} <\infty\ $$
for some $\varepsilon =\varepsilon(\mu)>0$. The function
\[
\lap(\mu)(s) =\int_{\rr }d\mu(t)\,e^{-st} ,\qquad -(\omega+\varepsilon)<\Re(s) <(\omega+\varepsilon)
\]
is called the {\em quaternionic bilateral (right) Laplace-Stieltjes transform} of $\mu$.

We denote by
$\mathbf{V}(T)$ the set of quaternionic bilateral Laplace-Stieltjes transforms of measures in $ \mathbf{S}(T)$.}
\end{definition}
\begin{lemma}\label{ConProd}
Let $\mu,\nu\in \mathbf{S}(T)$ and $a\in\hh$.
\begin{enumerate}[(i)]
\item The measures $a \mu$ and $\mu + \nu$ belong to $\mathbf{S}(T)$ and $\lap(a\mu ) = a\lap(\mu)$ and $\lap(\mu + \nu) = \lap(\mu) + \lap(\nu)$.
\item The measures $\mu\ast\nu$ belongs to $ \mathbf{S}(T)$. If $\nu$ is real-valued, then $\lap(\mu\ast\nu)=\lap(\mu)\ast\lap(\nu)$.
\end{enumerate}
\end{lemma}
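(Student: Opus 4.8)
The plan is to verify the two items essentially by reducing each statement to a question about ordinary (complex or real) measures and then invoking the Radon–Nikod\'ym description of the integral together with the convolution machinery already built up in this section. For item (i), the fact that $a\mu$ and $\mu+\nu$ belong to $\mathbf{S}(T)$ is immediate from Corollary~\ref{VarProp}: $|a\mu| = |a|\,|\mu|$ and $|\mu+\nu|\le|\mu|+|\nu|$, so the defining integral $\int_\rr d|\,\cdot\,|(t)\,e^{(\omega+\varepsilon)|t|}$ stays finite (for the sum one takes $\varepsilon = \min(\varepsilon(\mu),\varepsilon(\nu))$). The identities $\lap(a\mu) = a\lap(\mu)$ and $\lap(\mu+\nu) = \lap(\mu)+\lap(\nu)$ then follow directly from the linearity of the integral in the measure, which was recorded in the Remark following the definition of $\int f\,d\mu\,g$; one just applies it with the fixed integrand $t\mapsto e^{-st}$, noting that this integrand commutes with nothing in particular but the identities in that Remark were proved in exactly the needed generality.

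For item (ii), the first task is to show $\mu*\nu\in\mathbf{S}(T)$. Here I would use Corollary~\ref{VarConvol}, which gives $|\mu*\nu|\le|\mu|*|\nu|$ as positive measures on $\borel(\rr)$, together with Corollary~\ref{ConvolSplit} applied to the nonnegative function $F(r) = e^{(\omega+\varepsilon)|r|}$ (with $\varepsilon$ chosen small enough to work for both $\mu$ and $\nu$): one obtains
\[
\int_\rr e^{(\omega+\varepsilon)|r|}\,d|\mu*\nu|(r) \le \int_\rr\int_\rr e^{(\omega+\varepsilon)|s+t|}\,d|\mu|(s)\,d|\nu|(t) \le \int_\rr e^{(\omega+\varepsilon)|s|}\,d|\mu|(s)\int_\rr e^{(\omega+\varepsilon)|t|}\,d|\nu|(t) <\infty,
\]
using $|s+t|\le|s|+|t|$ and Tonelli. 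This shows $\mu*\nu\in\mathbf{S}(T)$. For the identity $\lap(\mu*\nu) = \lap(\mu)*\lap(\nu)$ when $\nu$ is real-valued, I would apply Corollary~\ref{ConvolSplit} with $F(r) = e^{-sr}$ for $s$ in the common strip of convergence (the hypothesis of that corollary on absolute integrability of $F(s+t)$ is met by the exponential bound just established, since $|e^{-s(u+v)}| \le e^{|\Re(s)|(|u|+|v|)}$ and $|\Re(s)|<\omega+\varepsilon$), to get
\[
\lap(\mu*\nu)(s) = \int_\rr e^{-sr}\,d(\mu*\nu)(r) = \int_\rr\int_\rr e^{-s(u+v)}\,d\mu(u)\,d\nu(v) = \int_\rr\left(\int_\rr e^{-su}\,d\mu(u)\right)e^{-sv}\,d\nu(v),
\]
and the last step is exactly where the hypothesis that $\nu$ is real-valued is used: the factor $e^{-sv}$ must be pulled to the left past $d\nu(v)$, which is legitimate since $\nu$ takes values in $\rr$ and real scalars commute with everything, so $\int e^{-su}d\mu(u)$ can be moved outside the $\nu$-integral, yielding $\lap(\mu)(s)\,\lap(\nu)(s)$, i.e.\ $(\lap(\mu)*\lap(\nu))(s)$ in the pointwise-product sense that the symbol $*$ on transforms presumably denotes here.

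The main obstacle is bookkeeping with noncommutativity: the quaternionic Laplace–Stieltjes transform is not multiplicative in general, and the whole point of the real-valuedness hypothesis on $\nu$ is to allow the one commutation step above. I would be careful to apply Corollary~\ref{ConvolSplit} with the correct order of the iterated integral (the convolution $\mu*\nu$ is defined via the \emph{non}commutative product measure $\mu\times\nu$ with $\mu\times\nu(A\times B) = \mu(A)\nu(B)$), so that in $\int\int e^{-s(u+v)}d\mu(u)\,d\nu(v)$ the measure $\mu$ genuinely sits to the left; writing $e^{-s(u+v)} = e^{-su}e^{-sv}$ (valid since $su$ and $sv$ commute) one then has $\int_\rr d\mu(u)\,e^{-su}e^{-sv}$ in the inner slot, and only the real-valuedness of $\nu$ lets this be rearranged to $\lap(\mu)(s)\lap(\nu)(s)$. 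Once the order of operations is pinned down, everything else is a routine application of Fubini/Tonelli for quaternionic measures (available via the splitting into $\cc_I$-components as in Remark~\ref{MSplit}) and the estimates already in hand.
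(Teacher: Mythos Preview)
Your proposal is correct and follows essentially the same route as the paper's proof: Corollary~\ref{VarProp} for item~(i), then Corollary~\ref{VarConvol} together with Corollary~\ref{ConvolSplit} and the triangle inequality $|s+t|\le|s|+|t|$ to place $\mu*\nu$ in $\mathbf{S}(T)$, and finally Corollary~\ref{ConvolSplit} with $F(r)=e^{-sr}$ plus the commutation permitted by real-valuedness of $\nu$ for the product identity. Your reading of ``$\lap(\mu)\ast\lap(\nu)$'' as the pointwise product is also the intended one.
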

\begin{proof}
Let $\varepsilon = \min\{\varepsilon(\mu), \varepsilon(\nu)\}$. Corollary~\ref{VarProp} implies
\[\int_{\rr} d|a\mu |\, e^{|t|(\omega + \varepsilon)} = |a|\int_{\rr} d|\mu|\,e^{|t|(\omega + \varepsilon)} <\infty\]
and
\[\int_{\rr} d|\mu +\nu|\,e^{|t|(\omega + \varepsilon)} \leq \int_{\rr}d|\mu|\,e^{|t|(\omega + \varepsilon)} + \int_{\rr}d|\nu|\,e^{|t|(\omega + \varepsilon)}< \infty.\]
Thus, $a\mu$ and $\mu + \nu$ belong to $\mathbf{S}(T)$. The relations $\lap(a\mu) = a \lap(\mu)$ and $\lap(\mu + \nu)=\lap(\mu) + \lap(\nu)$ follow from the left linearity of the integral in the measure.

The variation of the convolution of $\mu$ and $\nu$ satisfies $|\mu\ast\nu|(E) \leq |\mu|\ast|\nu|(E)$ for any Borel set $E\in\borel(\rr)$, cf. Corollary~\ref{VarConvol}. In view of Corollary \ref{ConvolSplit}, we have
\begin{align*}\int_{\rr }d|\mu\ast\nu|(r) e^{(w+\varepsilon)|r|} &\leq \int_{\rr }\int_{\rr }d|\mu|(s)\, d|\nu|(t) e^{(w+\varepsilon)|s+t|}\\
&\leq \int_{\rr } d|\mu|(s)\,e^{(w+\varepsilon)|s|}\int_{\rr } d|\nu|(t)\,e^{(w+\varepsilon)|t|} <\infty.
\end{align*}
Therefore, $\mu\ast\nu\in \mathbf{S}(T)$. If $\nu$ is real-valued, then $\nu$ commutes with $e^{-s t}$ and
Fubini's theorem implies for $s\in\hh$ with $-(\omega+\varepsilon)<\Re(s) <\omega + \varepsilon$
\begin{align*}\lap(\mu\ast\nu)(s) &= \int_{\rr }d(\mu\ast\nu)(r)\, e^{-s r} = \int_{\rr }\int_{\rr } d\mu(t)\, d\nu(u)\,e^{-s (t+u)}\\
&= \int_{\rr } d\mu(t)\,e^{-s t}\,\int_{\rr } d\nu(u)\,e^{-s u} = \lap(\mu)(s)\,\lap(\nu)(s).
\end{align*}

\end{proof}

\begin{theorem}\label{p.642Lemma2}
Let $f\in \mathbf{V}(T)$ with \(f(s)=\int_{\rr }d\mu(t)\,e^{-st}\) for $-(\omega+\varepsilon)<\Re(s) <\omega+\varepsilon$.

\begin{enumerate}[(i)]
\item The function $f$ is right slice regular on the strip $\{ s\in \mathbb{H}\  :\ -(\omega+\varepsilon)<\Re(s) <\omega+\varepsilon \}$.
\item For any $n\in\mathbb N$, the measure $\mu^{n}$ defined by
$$
\mu^{n}(E)=\int_E d\mu(t)\,(-t)^n\qquad \text{for }E\in\borel(\rr )$$
belongs to $\mathbf{V}(T)$ and, for $s$ with $-(\omega+\varepsilon)<\Re(s) <\omega+\varepsilon$, we have
\begin{equation}
\label{MeasDeriv}\partial_s^nf(s)=\int_{\rr }d\mu^{n}(t)\,e^{-st}=\int_{\rr }  d\mu(t)\, (-t)^n e^{-st},
\end{equation}
where $\partial_s f$ denotes the slice derivative of $f$ as in Definition~\ref{SliceDeriv}.
\end{enumerate}
\end{theorem}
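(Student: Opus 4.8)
The plan is to prove part (i) first by the Splitting Lemma, reducing slice regularity of $f$ to the holomorphicity of two complex-valued functions, each of which is an ordinary bilateral Laplace--Stieltjes transform of a complex measure; then to prove part (ii) by differentiating under the integral sign, using the exponential integrability hypothesis defining $\mathbf{S}(T)$ to justify the dominated-convergence argument.

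For part (i), fix $I,J\in\S$ with $I\perp J$ and use Remark~\ref{MSplit} to write $\mu = \mu_1 + J\mu_2$ with $\mu_1,\mu_2$ $\cc_I$-valued complex measures, each satisfying the same exponential integrability bound as $|\mu|$ (since $|\mu_i|\le|\mu|$). Restricting $f$ to the complex plane $\cc_I$ gives, for $z\in\cc_I$ with $-(\omega+\varepsilon)<\Re(z)<\omega+\varepsilon$,
\[
f_I(z) = \int_\rr d\mu_1(t)\,e^{-zt} + J\int_\rr d\mu_2(t)\,e^{-zt} =: f_1(z) + J f_2(z),
\]
where, crucially, $e^{-zt}\in\cc_I$ commutes with the scalars $d\mu_i(t)\in\cc_I$, so $f_1,f_2$ are genuinely $\cc_I$-valued. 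The classical theory of the bilateral Laplace--Stieltjes transform (or directly: differentiation under the integral sign, justified by the bound $\int_\rr d|\mu_i|(t)\,|t|^k e^{|t|\,|\Re z|}<\infty$ on compact subsets of the open strip, which follows from $|t|^k e^{|t||\Re z|}\le C_k e^{|t|(\omega+\varepsilon)}$ there) shows $f_1,f_2$ are holomorphic on $\cc_I$ inside the strip. By the Splitting Lemma (Lemma~\ref{SplitLem}), $f$ is right slice regular on the strip $\{s\in\hh : -(\omega+\varepsilon)<\Re(s)<\omega+\varepsilon\}$, which is an axially symmetric slice domain; since $I$ was arbitrary this completes (i).

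For part (ii), first check that $\mu^n\in\mathbf{S}(T)$: since $|t|^n e^{(\omega+\varepsilon')|t|}\le C\, e^{(\omega+\varepsilon)|t|}$ for any $0<\varepsilon'<\varepsilon$ and a suitable constant $C$, and $|\mu^n|(E)=\int_E d|\mu|(t)\,|t|^n$ by Theorem~\ref{RadonNikodym} (writing $\mu = h\,d|\mu|$ so $\mu^n = (-t)^n h\, d|\mu|$, whose variation is $|t|^n\,d|\mu|$), we get $\int_\rr d|\mu^n|(t)\,e^{(\omega+\varepsilon')|t|}<\infty$, so $\mu^n\in\mathbf{S}(T)$ and hence $\lap(\mu^n)\in\mathbf{V}(T)$. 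Then the identity \eqref{MeasDeriv} is obtained by induction on $n$, differentiating $f$ with respect to the real part $s_0$: by the remark following Definition~\ref{SliceDeriv}, $\partial_s f = \partial f_I/\partial x_0$, so it suffices to differentiate $\int_\rr d\mu(t)\, e^{-st}$ (viewed on $\cc_I$, or componentwise via the splitting) in $s_0$ and move the derivative inside the integral. The differentiation under the integral sign is legitimate because, on any compact subset of the open strip, the difference quotients of $t\mapsto e^{-st}$ are dominated (uniformly) by a constant times $|t|\,e^{(\omega+\varepsilon)|t|}$, which is $|\mu|$-integrable; each differentiation brings down a factor $-t$, giving after $n$ steps $\int_\rr d\mu(t)\,(-t)^n e^{-st} = \int_\rr d\mu^n(t)\, e^{-st} = \lap(\mu^n)(s)$.

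The main obstacle is the bookkeeping around scalar commutativity: one must be careful that $e^{-st}$ does not commute with quaternionic scalars $d\mu(t)$ in general, so the differentiation-under-the-integral and the Fubini-type manipulations must be carried out either componentwise after the splitting $\mu=\mu_1+J\mu_2$ (where everything lives in $\cc_I$ and commutes) or via the Bochner-integral framework of Section~\ref{sec3}, reducing to a vector-valued integral in the real Banach space $\hh$; once the problem is correctly set up in $\cc_I$, the analytic estimates are exactly the classical ones for the bilateral Laplace transform and present no difficulty.
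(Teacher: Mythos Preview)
Your proposal is correct and follows essentially the same route as the paper: both arguments reduce right slice regularity to complex holomorphicity via the Splitting Lemma and justify differentiation under the integral sign with the dominated-convergence bound $|t|^n e^{(\omega+\varepsilon')|t|}\le K\,e^{(\omega+\varepsilon)|t|}$ coming from $\mu\in\mathbf{S}(T)$. The only organizational difference is that you split the \emph{measure} as $\mu=\mu_1+J\mu_2$ to reduce to two classical $\cc_I$-valued Laplace--Stieltjes transforms, whereas the paper keeps $\mu$ quaternionic and computes the complex difference quotient $(f_I(p)-f_I(s))(p-s)^{-1}$ directly on the plane $\cc_I$, bounding it via the identity $e^{-pt}-e^{-st}=\int_0^1 e^{-ts-t\xi(p-s)}t(p-s)\,d\xi$; either packaging yields the same estimate and the same conclusion.
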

\begin{proof}
For every $n\in \mathbb{N}$ and every $0<\varepsilon_1 <\varepsilon$ there exists a constant $K$ such that
$$
|t|^ne^{(\omega+\varepsilon_1)|t|}\leq K e^{(\omega+\varepsilon)|t|}, \ \ t\in \rr .
$$
Since $\mu\in \mathbf{S}(T)$, we have
\[
\int_{\rr } d|\mu^{n}|(t)\,e^{(\omega +\varepsilon_1)|t|}=
\int_{\rr } d|\mu|(t)\,|t|^ne^{(\omega +\varepsilon_1)|t|}
\leq K\int_{\rr } d|\mu|(t)\,e^{(\omega +\varepsilon)|t|}<\infty
\]
and so  $\mu^{n}\in \mathbf{S}(T)$.

Let $I,J\in \si$ with $I\perp J$ and let $f_I = f_1 + Jf_2$ where $f_1$ and $f_2$ have values in $\cc_I$. Then $ f_I\partial_I  = 0$ if and only if $f_1$ and $f_2$ are holomorphic, which is equivalent to the existence of the limit
\[f_1'(s) + Jf_2'(s) = \lim_{\C_I\ni p\to s} \ \frac{f_1(p) - f_1(s)}{p-s} + J\frac{f_2(p)-f_2(s)}{p-s} =\lim_{\C_I\ni p\to s}\big(f_I(p)-f_I(s)\big)(p-s)^{-1}.\]
For any $s = s_0 + Is_1$ with $-(\omega+\varepsilon)<\Re(s)<\omega+\varepsilon$, we have
\[ \lim_{\C_I\ni p\to s}(f_I(p)-f_I(s))(p-s)^{-1}=\lim_{\C_I\ni p\to s}\int_{\rr }d\mu(t)\,\frac{e^{-pt}-e^{-st}}{p-s}.\]
If $p$ is sufficiently close to $s$ such that also $-(\omega+\varepsilon) < \Re(p)<\omega+\varepsilon$, then the simple calculation
\[
|e^{-p t} - e^{-s t}| = \left|\int_{0}^1e^{-ts -t\xi(p-s)}t(p-s)\, d\xi\right| \leq |t| e^{(\omega+\varepsilon)|t|} |p-s|,
\]
yields the estimate
\[
\frac{|e^{-pt}-e^{-st}|}{|p-s|}\leq |t|e^{(\omega+\varepsilon)|t|},
\]
which allows us to apply Lebesgue's theorem of dominated convergence in order to exchange limit and integration. We obtain
\begin{equation}\label{limitDs} \lim_{p\in\C_I\to s}(f_I(p)-f_I(s))(p-s)^{-1}=\int_{\rr } d\mu(t)\,(-t)e^{-st}=\int_{\rr } d\mu^{1}(t)\, e^{-st}.\end{equation}
Consequently, $f$ is right slice regular on the strip $\{s\in\hh: -(\omega + \varepsilon)<\Re(s)< \omega + \varepsilon \}$.
Moreover, \eqref{limitDs} implies
\[
\partial_sf(s)=\int_{\rr } d\mu^{1}(t)\,e^{-st}
\]
for $-(\omega + \varepsilon)< \Re(s)<\omega+\varepsilon$. By induction we get \eqref{MeasDeriv}.

\end{proof}

\begin{definition}[Functions of the quaternionic infinitesimal generator]{\rm
Let $T$ be the quaternionic infinitesimal generator of the strongly continuous group $(\Z_T(t))_{t \in \rr}$ on a quaternionic Banach space $V$. For $f\in \mathbf{V}(T)$ with
\[
f(s)=\int_{\rr }d\mu(t)\,e^{-st }  \quad\text{for } -(\omega+\varepsilon)<\Re(s)< \omega+\varepsilon,
\]
where $\mu\in \mathbf{S}(T)$, we define the right linear operator $f(T)$ on $V$ by
\begin{equation}\label{CalcDefi}
f(T)v=\int_{\rr } d\mu(t)\, \Z_T(-t)v\qquad\text{for }v\in V.
\end{equation}}
\end{definition}
\begin{remark}\label{SRCompatible}
Note that in particular for $p\in\hh$ with $\Re(p)<-\omega$ the function $s\mapsto S_R^{-1}(p,s)$ lies in $\mathbf{S}(T)$.  Set $\mu_p = - \indi_{[0,\infty)}(t) e^{tp} \, dt$, where $\indi_A$ denotes the characteristic function of a set $A$. If $\Re(p)<\Re(s)$, then
\begin{equation*}
\lap(\mu_p)(s) = \int_{\rr}d\mu_p(t)\,e^{-ts} = -\int_{0}^{\infty} e^{tp}e^{-ts}\,dt = - S_L^{-1}(s,p) = S_R^{-1}(p,s)
\end{equation*}
and
\begin{align*}\lap(\mu_p)(T) &= \int_{\rr}d\mu_p(t)\,\Z(-t) = -\int_{0}^{\infty}e^{tp}\,\Z(-t)\,dt = - \int_{-\infty}^{0}e^{-tp}\,\Z(t)\,dt =  S_R^{-1}(p,T).
\end{align*}

For $p\in\hh$ with $\omega < \Re(p)$ set $\mu_p  = \chi_{(-\infty,0]}(t) e^{tp} \, dt$. Similar computations show that also in this case $S_R^{-1}(p,s)=\lap(\mu_p)(s) \in\mathbf{S}(T)$ if $\Re(s)<\Re(p)$ and $\lap(\mu_p)(T) = S_R^{-1}(p,T)$.
\end{remark}
\begin{theorem}
For any $f\in\mathbf{V}(T)$, the  operator $f(T)$ is bounded.
\end{theorem}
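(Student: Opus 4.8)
The plan is to reduce the statement to a scalar estimate obtained by combining the exponential growth bound on the group with the exponential integrability of the total variation of the representing measure, and then to invoke the theory of Bochner integration to conclude that $f(T)v$ is a well-defined vector depending linearly and boundedly on $v$.

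First I would recall the two ingredients already at hand. By Theorem~\ref{GroupGen} there are constants $M>0$ and $\omega\geq0$ with $\|\Z_T(t)\|\leq Me^{\omega|t|}$ for all $t\in\rr$; in particular $\|\Z_T(-t)v\|\leq Me^{\omega|t|}\|v\|$ for every $v\in V$. Since $\mu\in\mathbf{S}(T)$, there is an $\varepsilon=\varepsilon(\mu)>0$ with $\int_{\rr}e^{(\omega+\varepsilon)|t|}\,d|\mu|(t)<\infty$, so a fortiori $\int_{\rr}e^{\omega|t|}\,d|\mu|(t)<\infty$.

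Next I would verify that the integral in \eqref{CalcDefi} is meaningful. Write $\mu=h\,d|\mu|$ with $|h|\equiv1$ as provided by Corollary~\ref{ACVariation}, so that by definition $f(T)v=\int_{\rr}h(t)\,\Z_T(-t)v\,d|\mu|(t)$, understood as a Bochner integral in the real Banach space underlying $V$. For fixed $v$ the map $t\mapsto\Z_T(-t)v$ is continuous on $\rr$, because $(\Z_T(t))_{t\in\rr}$ is strongly continuous, and hence it is strongly $|\mu|$-measurable (it is Borel measurable with separable range); multiplying by the measurable function $h$ preserves $|\mu|$-measurability. It is moreover Bochner integrable, since
\[
\int_{\rr}\|h(t)\,\Z_T(-t)v\|\,d|\mu|(t)=\int_{\rr}\|\Z_T(-t)v\|\,d|\mu|(t)\leq M\|v\|\int_{\rr}e^{\omega|t|}\,d|\mu|(t)<\infty .
\]
Thus $f(T)v\in V$ is well defined for every $v\in V$. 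Right linearity of $f(T)$ then follows from the right linearity of each operator $\Z_T(-t)$ together with the additivity of the integral and its behaviour under right multiplication by quaternionic constants, and boundedness is immediate from the inequality $\|\int_{\rr}F\,d|\mu|\|\leq\int_{\rr}\|F\|\,d|\mu|$ for Bochner integrals:
\[
\|f(T)v\|=\Big\|\int_{\rr}h(t)\,\Z_T(-t)v\,d|\mu|(t)\Big\|\leq\int_{\rr}\|\Z_T(-t)v\|\,d|\mu|(t)\leq\Big(M\int_{\rr}e^{(\omega+\varepsilon)|t|}\,d|\mu|(t)\Big)\|v\| ,
\]
so that $\|f(T)\|\leq M\int_{\rr}e^{(\omega+\varepsilon)|t|}\,d|\mu|(t)<\infty$.

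I do not anticipate a genuine difficulty here; the only points demanding care are bookkeeping ones. One must make sure that the integral defining $f(T)v$ is read through the polar decomposition $\mu=h\,d|\mu|$ and as a Bochner integral of a function valued in the real Banach space underlying $V$, so that both $|\mu|$-measurability and the dominated bound are in place; and one should observe that, although $f(T)$ is a priori defined through a choice of representing measure $\mu$, the displayed estimate depends only on $\omega$, $M$ and $|\mu|$, so the operator norm bound is of the expected, intrinsic form. Everything else is a direct consequence of the exponential estimates recalled in the first step.
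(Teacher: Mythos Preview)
Your argument is correct and is essentially the same as the paper's own proof: both derive the bound $\|f(T)v\|\leq M\|v\|\int_{\rr}e^{\omega|t|}\,d|\mu|(t)$ from the growth estimate $\|\Z_T(t)\|\leq Me^{\omega|t|}$ together with $\mu\in\mathbf{S}(T)$. You supply more detail than the paper does on the Bochner-integrability and measurability issues, but the core estimate and strategy are identical.
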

\begin{proof}
Let $f(s) = \int_{\rr}d\mu(t)\,e^{-st}\in\mathbf{V}(T)$ with $\mu\in\mathbf{S}(T)$. Since $\mathcal{\|U}_T(t)\| \leq Me^{w|t|}$,  we have
$$\| f(T) v\| \leq \int_{\rr }d|\mu|(t)\, \|\Z_T(-t)\|\,\|v\| \leq M\int_{\rr }d|\mu|(t)\,e^{w|t|}\|v\|.$$
for any $v\in V$. Thus, $f(T)$ is bounded.

\end{proof}

\begin{lemma}\label{PFCLT}
Let $f = \lap(\mu)$ and $g= \lap(\nu)$ belong to $\mathbf{V}(T)$ and let $a\in\hh$.
\begin{enumerate}[(i)]
\item $(af)(T)=a f(T)$ and $(f+g)(T)=f(T)+g(T)$.
\item If $g$ is an intrinsic function, then $\nu$ is real valued and $(fg)(T)=f(T)g(T)$.
\end{enumerate}
\end{lemma}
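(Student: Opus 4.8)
The plan is to derive both statements directly from the definition \eqref{CalcDefi} of $f(T)$, from Lemma~\ref{ConProd}, and from the Remark following \eqref{IntDefi} on how the vector-valued integral responds to the operations $\mu\mapsto a\mu$, $(\mu,\nu)\mapsto\mu+\nu$ and $(\mu,\nu)\mapsto\mu\ast\nu$ on the measure. For part~(i): Lemma~\ref{ConProd}(i) shows that $a\mu,\mu+\nu\in\mathbf{S}(T)$ with $\lap(a\mu)=af$ and $\lap(\mu+\nu)=f+g$, so $af$ and $f+g$ lie in $\mathbf{V}(T)$; evaluating \eqref{CalcDefi} with the representing measures $\mu+\nu$ and $a\mu$ and using linearity of the integral in the measure gives $(f+g)(T)v=\int_\rr d\mu(t)\,\Z_T(-t)v+\int_\rr d\nu(t)\,\Z_T(-t)v=f(T)v+g(T)v$ and $(af)(T)v=\int_\rr d(a\mu)(t)\,\Z_T(-t)v=a\int_\rr d\mu(t)\,\Z_T(-t)v=af(T)v$, the last equality because left multiplication by the fixed quaternion $a$ is a bounded $\rr$-linear map on $V$ and hence commutes with the Bochner integral.

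For part~(ii) I would first show that $g$ intrinsic forces $\nu$ to be real-valued. Write $\nu=\sum_{i=0}^3\nu_ie_i$ with finite real signed Borel measures $\nu_i$; since $|\nu_i|\le|\nu|$, each $\nu_i$ still has a finite exponential moment. On the real interval $(-(\omega+\varepsilon),\omega+\varepsilon)$ the kernel $e^{-s_0t}$ is a real scalar, so $g(s_0)=\sum_{i=0}^3\big(\int_\rr e^{-s_0t}\,d\nu_i(t)\big)e_i$. An intrinsic function maps the real axis into $\rr$ (because $\C_I\cap\C_J=\rr$ for $I\ne J$), so $\int_\rr e^{-s_0t}\,d\nu_i(t)=0$ on that interval for $i=1,2,3$; the uniqueness theorem for the bilateral Laplace-Stieltjes transform, applicable since the $\nu_i$ have finite exponential moments, then gives $\nu_1=\nu_2=\nu_3=0$, i.e.\ $\nu=\nu_0$ is real-valued.

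With $\nu$ real-valued, Lemma~\ref{ConProd}(ii) gives $\mu\ast\nu\in\mathbf{S}(T)$ and $\lap(\mu\ast\nu)=fg$, hence $(fg)(T)v=\int_\rr d(\mu\ast\nu)(r)\,\Z_T(-r)v$. Since $\mu\ast\nu$ is the image of $\mu\times\nu$ under $\phi(s,t)=s+t$, Lemma~\ref{LemImagInt} rewrites this as $\int_{\rr^2}\Z_T(-(s+t))v\,d(\mu\times\nu)(s,t)$, the required integrability following from $\|\Z_T(-(s+t))v\|\le Me^{\omega|s|}e^{\omega|t|}\|v\|$ and $\mu,\nu\in\mathbf{S}(T)$. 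Expressing $\mu\times\nu$ through its density with respect to $|\mu|\times|\nu|$ as in Lemma~\ref{ProdVar}, inserting the group law $\Z_T(-(s+t))=\Z_T(-s)\Z_T(-t)$, and applying Fubini's theorem, the inner integration over $t$ produces $\Z_T(-s)\big(g(T)v\big)$ — here one uses that the density of the real measure $\nu$ with respect to $|\nu|$ is $\{-1,1\}$-valued and hence commutes with the $\rr$-linear operator $\Z_T(-s)$, which may therefore be moved outside the $t$-integral — and the outer integration over $s$ then gives $\int_\rr d\mu(s)\,\Z_T(-s)\big(g(T)v\big)=f(T)\big(g(T)v\big)$.

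I expect the main obstacle to be exactly this last rearrangement: $\Z_T(-s)$ is only right $\hh$-linear, so it does not commute with left multiplication by an arbitrary quaternion, and this failure is precisely what the hypothesis that $g$ is intrinsic — equivalently, that $\nu$ is real-valued — removes. A minor technical point is keeping track of which side of the measure the operator-valued integrand sits on, since the integral in \eqref{CalcDefi} has the measure to the left of the vector whereas Corollary~\ref{ConvolSplit} is phrased with the integrand on the left; one checks that the same Fubini computation goes through because the scalar densities factor out on the correct side.
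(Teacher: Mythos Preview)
Your proposal is correct and follows essentially the same route as the paper: part~(i) via Lemma~\ref{ConProd}(i) and left linearity of the integral in the measure, and part~(ii) by first arguing that $g$ intrinsic forces $\nu$ to be real-valued (via injectivity of the Laplace--Stieltjes transform), then invoking Lemma~\ref{ConProd}(ii), splitting the convolution integral by Fubini/Corollary~\ref{ConvolSplit}, applying the group law $\Z_T(-(s+t))=\Z_T(-s)\Z_T(-t)$, and using that the real-valued measure $\nu$ commutes with the right-linear operator $\Z_T(-s)$. The only cosmetic difference is that you extract the realness of $\nu$ by evaluating $g$ on the real axis and decomposing $\nu=\sum_i\nu_ie_i$, whereas the paper reads it off from the $\alpha(s_0,s_1)+\beta(s_0,s_1)I$ form of an intrinsic function; both reduce to the same injectivity step.
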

\begin{proof}
The statement (i) follows immediately from Lemma~\ref{ConProd} and the left linearity of the integral \eqref{CalcDefi} in the measure.

Consider (ii) and write for $s = s_0 + I s_1$
\[g(s) = \int_{\rr }d\nu(t)\,e^{-st } = \int_{\rr }d\nu(t)\,e^{-s_0t }cos(-s_1t) + \int_{\rr }d\nu(t)\,e^{-s_0t }\sin(-s_1t)I.\]
If $g$ is intrinsic, then $\alpha(s_0,s_1) = \int_{\rr }d\nu(t)\,e^{-s_0t }\cos(-s_1t)$ and $\beta(s_0,s_1) = \int_{\rr }d\nu(t)\,e^{-s_0t }\sin(-s_1t)$ which implies that also $\nu$ is real-valued, since the Laplace-Stieltjes transform is injective on the set of complex measures. Lemma~\ref{ConProd} gives $fg = \lap(\mu\ast\nu)\in \mathbf{V}(T)$ and
\begin{gather*} (fg)(T)v = \int_{\rr}d(\mu\ast\nu)(r)\, \Z_T(-r)v = \int_\rr\int_\rr d\mu(s)\, d\nu(t)\, \Z_T(-(s+t))v\\
=\int_\rr d\mu(s)\,\Z_T(-s)\int_\rr d\nu(t)\, \Z_T(-t)v= f(T)g(T)v,
\end{gather*}
where we use that $\Z_T(-s)$ and $\nu$ commute because $\nu$ is real-valued.
\end{proof}

\section{Comparison with the $S$-functional calculus}

A natural question that arises is the relation between the functional calculus introduced in this paper and the $S$-functional calculus for unbounded operators.
In this section we will show that that in the case the function $f$ is slice regular also at infinity  the two functional calculi coincide. In order to prove this, we need specialized versions of Cauchy's integral theorem and the Residue theorem that fit into our setting. The next theorem is analogue to Lemma~4.5.1 in \cite{MR2752913}.
\begin{theorem}\label{CauchyIntAp}
Let $V$ be a two-sided quaternionic Banach space, let $U\subset\hh$ be an axially symmetric open set such that $\partial(U\cap\C_I)$ is a finite number of continuously differentiable Jordan curves and let $O$ be an open set with $\overline{U}\subset O$. If $f:O\to \hh$ is right slice regular and $g: O\to V$ is left slice regular, then, for any $I\in\S$, it holds
\begin{equation*}
\int_{\partial (U\cap\C_I)} f(s)\, ds_I\, g(s) = 0.
\end{equation*}
\end{theorem}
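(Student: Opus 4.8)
The plan is to fix $I\in\S$, set $\Omega:=U\cap\C_I$, and prove that the $V$-valued differential $1$-form $\eta(s):=f(s)\,ds_I\,g(s)$ is \emph{closed} on $\Omega$; the statement then follows at once from Green's theorem. First I would record the regularity that is needed. By the Splitting Lemma (Lemma~\ref{SplitLem}) the restriction $f_I$ of $f$ to $O\cap\C_I$ is of the form $f_1+Jf_2$ with $f_1,f_2$ holomorphic, hence $f_I$ is $C^\infty$ on a neighbourhood of $\overline\Omega$ (note $\overline\Omega\subset\overline U\cap\C_I\subset O\cap\C_I$); likewise $g_I$, being a $V$-valued holomorphic function on $O\cap\C_I$, is $C^1$ near $\overline\Omega$. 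Thus $\eta$ is a $C^1$ form on $\overline\Omega$ and the integral over $\partial\Omega$, a finite union of continuously differentiable (hence compact) Jordan curves, is well defined.

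Next comes the computation. Writing $s=x_0+Ix_1$ we have $ds=dx_0+I\,dx_1$, so $ds_I=-ds\,I=dx_1-I\,dx_0$, and therefore
\[
\eta=(f_I\,g_I)\,dx_1-(f_I\,I\,g_I)\,dx_0,\qquad\text{whence}\qquad
d\eta=\Big(\partial_{x_0}(f_I g_I)+\partial_{x_1}(f_I I g_I)\Big)\,dx_0\wedge dx_1.
\]
Now I insert the defining equations on $\C_I$: right slice regularity of $f$ gives $\partial_{x_0}f_I=-(\partial_{x_1}f_I)\,I$, and left slice regularity of $g$ gives $\partial_{x_0}g_I=-I\,(\partial_{x_1}g_I)$. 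Substituting and using that $I$ is constant,
\[
\partial_{x_0}(f_I g_I)=-(\partial_{x_1}f_I)\,I\,g_I-f_I\,I\,(\partial_{x_1}g_I)=-\partial_{x_1}(f_I\,I\,g_I),
\]
so $d\eta=0$ on $\Omega$.

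Finally, Green's theorem yields $\int_{\partial\Omega}\eta=\int_\Omega d\eta=0$. To make sense of this for the $V$-valued form I would apply an arbitrary continuous $\R$-linear functional $\varphi$ on $V$: then $\varphi\circ\eta$ is a closed, real-valued, $C^1$ $1$-form on $\overline\Omega$, so the classical Green's theorem gives $\int_{\partial\Omega}\varphi\circ\eta=0$; since $\varphi$ was arbitrary, the Hahn--Banach theorem forces $\int_{\partial\Omega}\eta=0$, which is the claim. If $\Omega$ happens to be unbounded one first intersects it with a large disc in $\C_I$ and lets the radius tend to infinity, using that $f$ and $g$ are regular on all of $O\supset\overline U$.

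The only real subtlety is the bookkeeping in the middle step: the cancellation works precisely because the imaginary unit in $ds_I$ is sandwiched between $f$ and $g$, so that the $I$ produced on the \emph{right} by differentiating the right slice regular $f$ and the $I$ produced on the \emph{left} by differentiating the left slice regular $g$ both land adjacent to this central $I$ and cancel against $\partial_{x_1}(f_I I g_I)$. This is exactly why the hypotheses must pair a right slice regular $f$ with a left slice regular $g$, and not the other way around. The remaining points --- $C^1$-smoothness of $f_I,g_I$ up to $\partial\Omega$ and the vector-valued version of Green's theorem --- are routine once reduced to the scalar situation as above.
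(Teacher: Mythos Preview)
Your argument is correct. The paper does not actually supply a proof of this theorem: it states the result and remarks beforehand that it is ``analogue to Lemma~4.5.1 in \cite{MR2752913}'', leaving the reader to consult that reference. The standard argument there proceeds via the Splitting Lemma: one writes $f_I=f_1+Jf_2$ and $g_I=g_1+g_2J$ with $\C_I$-valued holomorphic components, expands the product $f_I\,ds_I\,g_I$ into four pieces, and applies the classical complex Cauchy theorem to each. Your route --- computing $d\eta$ directly from the Cauchy--Riemann-type equations $\partial_{x_0}f_I=-(\partial_{x_1}f_I)I$ and $\partial_{x_0}g_I=-I\partial_{x_1}g_I$, and then invoking Green's theorem --- is the same idea packaged without the splitting, and arguably cleaner: it makes transparent \emph{why} the pairing (right regular on the left, left regular on the right, with $ds_I$ in between) is the correct one, exactly as you note in your final paragraph.

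One small caveat: your closing remark about unbounded $\Omega$ is not needed here and, as written, does not quite work. The hypothesis that $\partial(U\cap\C_I)$ consists of finitely many Jordan curves (hence compact) already forces $U\cap\C_I$ to be bounded in the intended reading; if instead $U\cap\C_I$ contained a neighbourhood of infinity, letting the auxiliary disc grow would introduce an arc over which the integral need not vanish without decay assumptions on $f$ and $g$ that are not in the hypotheses. Simply drop that sentence and the proof is complete.
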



\begin{lemma}\label{ResMod}
Let $O\subset\hh$ be open, let $f: O\setminus[p] \to \hh$ be right slice regular and let $g: O\to V $ be left slice regular such that $ p =p_0+Ip_1\in O$ is a pole of order $n_f\geq 0$ of $f_I$. If $\varepsilon>0$ is such  that $\overline{U_{\varepsilon}(p)\cap\cc_I}\subset O$, then
\[ \frac{1}{2\pi} \int_{\partial (U_{\varepsilon}(p)\cap\C_I)} f(s)\,ds_I\, g(s) = \sum_{k=0}^{ n_f -1} \frac{1}{k!}\Res_{p}\left(f_I(s)(s-p)^{k}\right)\left(\partial_s^k g(p)\right).\]
\end{lemma}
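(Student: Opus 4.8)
The plan is to reduce the quaternionic contour integral to an elementary Fourier‑coefficient computation on the circle $\partial(U_\varepsilon(p)\cap\C_I)$, by combining the Laurent expansion of $f$ around $p$ with the Taylor expansion of $g$. First I would fix $I,J\in\S$ with $I\perp J$; by hypothesis $\overline{U_\varepsilon(p)\cap\C_I}\subset O$, and I take $\varepsilon$ small enough that $U_\varepsilon(p)\cap\C_I$, the open disk of radius $\varepsilon$ about $p$ in $\C_I$, contains no singular point of $f_I$ besides $p$. Applying the Splitting Lemma (Lemma~\ref{SplitLem}) to the right slice regular function $f$ on $(O\setminus[p])\cap\C_I$ gives holomorphic $\C_I$‑valued functions $F_1,F_2$, each with a pole of order at most $n_f$ at $p$, with $f_I = F_1 + JF_2$; hence $f_I$ admits a Laurent expansion
\[ f_I(s) = \sum_{m\geq -n_f} a_m\,(s-p)^m,\qquad a_m\in\hh, \]
converging uniformly on $\{|s-p| = \varepsilon\}$, and by definition $\Res_p\big(f_I(s)(s-p)^k\big) = a_{-1-k}$ for every $k\geq 0$. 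Similarly, since $g$ is left slice regular on $O$, the restriction $g_I$ is holomorphic near $p$ as a map into the complex Banach space $V$ equipped with the complex structure $v\mapsto Iv$, so that
\[ g_I(s) = \sum_{j\geq 0} (s-p)^j\, d_j,\qquad d_j\in V, \]
uniformly on $\{|s-p|\leq\varepsilon\}$; and since the slice derivative of a left slice regular function coincides with $\partial_{x_0}$ of its restriction, an induction gives $d_j = \tfrac1{j!}\partial_s^j g(p)$.

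Next I would parametrize $\partial(U_\varepsilon(p)\cap\C_I)$ by $s(\theta) = p + \varepsilon e^{I\theta}$, $\theta\in[0,2\pi]$. A direct computation yields $ds = \varepsilon I e^{I\theta}\,d\theta$, hence $ds_I = -ds\,I = \varepsilon e^{I\theta}\,d\theta = (s-p)\,d\theta$, so that
\[ \frac1{2\pi}\int_{\partial(U_\varepsilon(p)\cap\C_I)} f(s)\,ds_I\,g(s) = \frac1{2\pi}\int_0^{2\pi} f_I(s(\theta))\,\big(s(\theta)-p\big)\,g_I(s(\theta))\,d\theta. \]
Substituting the two series and using their uniform convergence to integrate term by term, the integrand becomes $\sum_{m,j} a_m\,(s-p)^{m+j+1}\,d_j$, where on the circle $(s-p)^n = \varepsilon^n e^{In\theta}$. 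The orthogonality relation $\tfrac1{2\pi}\int_0^{2\pi} e^{In\theta}\,d\theta = \delta_{n,0}$ annihilates every term except those with $m+j+1 = 0$; as $m\geq -n_f$ and $j\geq 0$, the surviving pairs are exactly $j\in\{0,\dots,n_f-1\}$ with $m = -1-j$, leaving $\sum_{j=0}^{n_f-1} a_{-1-j}\,d_j$. Substituting $a_{-1-j} = \Res_p\big(f_I(s)(s-p)^j\big)$ and $d_j = \tfrac1{j!}\partial_s^j g(p)$ produces precisely the asserted identity; when $n_f = 0$ the sum is empty and the statement reduces to Cauchy's theorem (Theorem~\ref{CauchyIntAp}).

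The routine checks are the existence of the $\hh$‑valued Laurent expansion together with the reading of ``pole of order $n_f$'' via $F_1,F_2$, the identification of the Taylor coefficients of $g_I$ with $\tfrac1{j!}\partial_s^j g(p)$, and the uniform convergence justifying the term‑by‑term integration. The one genuinely delicate point is the noncommutative bookkeeping: each coefficient $a_m$ must be kept to the \emph{left} of the scalar power $(s-p)^m$ and each $d_j$ to the \emph{right} of $(s-p)^j$, and one must notice that the factor $ds_I$ supplies exactly the \emph{central} scalar $s-p$ that lets the powers amalgamate into $(s-p)^{m+j+1}$. It is this placement that forces the residues to appear in the order $\Res_p(\cdots)\,\partial_s^k g(p)$, with the vector $\partial_s^k g(p)\in V$ written on the right, rather than in the opposite order.
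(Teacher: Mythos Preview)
Your argument is correct and follows essentially the same route as the paper's proof: expand $f_I$ in a Laurent series and $g_I$ in a Taylor series about $p$, substitute into the integral, and integrate term by term so that only the terms with total exponent $-1$ survive. The only cosmetic difference is that you parametrize the circle explicitly and invoke the orthogonality of $e^{In\theta}$, whereas the paper keeps the contour-integral form and evaluates $\tfrac{1}{2\pi I}\int_{\partial(U_\varepsilon(p)\cap\C_I)}(s-p)^{n}\,ds$ directly; your explicit emphasis on the left/right placement of the coefficients is a helpful addition.
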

\begin{proof}
Since $f$ is right slice regular, its restriction $f_I$ is a vector-valued holomorphic function on $\C_I$ if we consider $\hh$ as a vector space over $\C_I$ by restricting the multiplication with quaternions on the right  to $\C_I$. Similarly, since $g$ is left slice regular, its restriction $g$ is a $V$-valued holomorphic function if we consider $V$ as a complex vector space over $\C_I$ by restricting the left scalar multiplication to $\C_I$. Consequently, if we set $\rho = \mathrm{dist}(p, \partial(O\cap\C_I)$, then
\begin{equation}\label{SeriesCoef} f_I(s) = \sum_{k=-n_f}^{\infty} a_k (s-p)^k \quad\text{and}\quad g_I(s) = \sum_{k=0}^{\infty} (s-p)^kb_k \qquad\text{for }s\in (U_{\rho}(p)\cap\C_I)\setminus\{p\} \end{equation}
with $a_k\in\hh$ and $b_k\in V$. These series converge uniformly on ${\partial( U_{\varepsilon}(p)\cap\C_I)}$ for any $0<\varepsilon <\rho$. Thus,
\begin{align*}
\frac1{2\pi}\int_{\partial(U_{\varepsilon}(p)\cap\C_I)} f(s)\,ds_I\,g(s) &= \frac1{2\pi}\int_{\partial(U_{\varepsilon}(p)\cap\C_I)}\left( \sum_{k=0}^\infty a_{k-n_f} (s-p)^{k-n_f}\right)\,ds_I\,\left(\sum_{j=0}^{\infty}(s-p)^{j} b_{j}\right) \\
&= \sum_{k=0}^{\infty}\sum_{j=0}^{k} a_{k-j-n_f}\left(\frac1{2\pi}\int_{\partial(U_{\varepsilon}(p)\cap\C_I)} (s-p)^{k-j-n_f}\, ds_I\, (s-p)^{j}\right) b_{j} \\
&= \sum_{k=0}^{\infty}\sum_{j=0}^{k} a_{k-j-n_f}\left(\frac1{2\pi I}\int_{\partial(U_{\varepsilon}(p)\cap\C_I)} (s-p)^{k-n_f}\, ds\right) b_{j}.
\end{align*}
As $\frac1{2\pi I}\int_{\partial(U_{\varepsilon}(p)\cap\C_I)} (s-p)^{k-n_f}\, ds$ equals $1$ if $ k-n_f = -1$ and $0$ otherwise, we obtain
\[\frac1{2\pi}\int_{\partial(U_{\varepsilon}(p)\cap\C_I)} f(s)\,ds_I\,g(s) = \sum_{j=0}^{n_f -1} a_{-(j+1)}b_{j}. \]
Finally, observe that $a_{-k} = \Res_{p}\left(f_I(s)(s-p)^{k-1}\right)$ and $b_{k} =  \frac{1}{k!} \partial_s^k g_I(p)$ by their definition in \eqref{SeriesCoef}.
\end{proof}

In order to compute the integral in the $S$-functional calculus we denote by $W_c$ the strip $W_c=\{s\in\hh: -c < \Re(s)<c\}$ for $c>0$ and we introduce
 the set $\partial( W_c\cap \mathbb{C}_I)$ for $I\in \mathbb{S}$.
It consists of the two lines $s=c+I\tau$  and $s=-c-I\tau$, $\tau\in \mathbb{R}$,
 and their orientation is such that on $\cc_I$ the orientation of $\partial( W_c\cap \mathbb{C}_I)$ is positive.

\begin{proposition}\label{propuno}
Let $\alpha$ and  $c$ be a real numbers such that
$
\omega<c<|\alpha|.
$
Then for any $u\in \dom (T^2)$ we have
\begin{equation}
\label{ZTInt}\Z_T(t)u=\frac{1}{2\pi}\int_{\partial(W_c\cap \mathbb{C}_I)} e^{ts}(\alpha -s)^{-2}\,ds_I\,S_R^{-1}(s,T) (\alpha \id -T)^2  u.
\end{equation}
\end{proposition}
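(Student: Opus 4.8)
The plan is to prove \eqref{ZTInt} by recognizing the right-hand side as an application of the $S$-functional calculus to a function that reproduces the group $\Z_T(t)$. First I would fix $t\in\rr$ and consider, for this fixed $t$, the function $s\mapsto g_t(s):=e^{ts}(\alpha-s)^{-2}$. Since $\Re(\alpha)=\alpha$ (as $\alpha$ is real) and $|\alpha|>c$, the pole $s=\alpha$ lies outside the closed strip $\overline{W_c}$, so $g_t$ is right slice regular on an open neighborhood of $\overline{W_c}$; moreover, because $\sigma_S(T)\subset W_\omega\subset W_c$, the contour $\partial(W_c\cap\C_I)$ is admissible for the $S$-functional calculus. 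The key observation is that $g_t\in\mathbf{V}(T)$: indeed, by Remark~\ref{SRCompatible} the function $s\mapsto S_R^{-1}(\alpha,s)=(\alpha-s)^{-1}$ (for $\Re(s)<\alpha$, resp.\ the other sign) is a Laplace--Stieltjes transform, the function $s\mapsto e^{ts}$ is the transform of a Dirac mass $\delta_{-t}$, and $(\alpha-s)^{-2}$ is obtained from $S_R^{-1}(\alpha,s)$ by Corollary~\ref{SResDeriv} (a slice derivative in $s$); products and derivatives of such transforms stay in $\mathbf{V}(T)$ by Theorem~\ref{p.642Lemma2} and Lemma~\ref{ConProd}, since $(\alpha-s)^{-1}$ is intrinsic. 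Thus $g_t$ has a well-defined functional-calculus operator, and the operator $\Z_T(t)(\alpha\id-T)^2$, restricted to $\dom(T^2)$, should be identified with $g_t[T](\alpha\id-T)^2$.

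The cleanest route, I think, is to compute both sides through the Laplace--Stieltjes calculus rather than directly through contour integration. On the one hand, writing $(\alpha-s)^{-2}=\lap(\rho_\alpha)(s)$ with an explicit real-valued measure $\rho_\alpha$ (the convolution $\mu_\alpha\ast\mu_\alpha$ of the measure from Remark~\ref{SRCompatible}), and $e^{ts}=\lap(\delta_{-t})(s)$, we get $g_t=\lap(\delta_{-t}\ast\rho_\alpha)$, and by the multiplicativity in Lemma~\ref{PFCLT}(ii) (valid since $(\alpha-s)^{-2}$ is intrinsic) we have $g_t(T)=\Z_T(-\,(-t))\cdot(\alpha\id-T)^{-2}$ acting appropriately, i.e. $g_t(T)(\alpha\id-T)^2u=\Z_T(t)u$ for $u\in\dom(T^2)$. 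On the other hand, one must show that $g_t(T)$ (the Laplace--Stieltjes operator) coincides with the contour integral $\frac1{2\pi}\int_{\partial(W_c\cap\C_I)}g_t(s)\,ds_I\,S_R^{-1}(s,T)$. This is exactly the kind of identity proved in Theorem~\ref{CompSCalc} (comparison of the two calculi for functions slice regular at infinity); here $g_t$ is \emph{not} slice regular at infinity because $e^{ts}$ blows up, but after multiplying by $(\alpha\id-T)^2$ the divergent contributions at infinity should vanish, and this is precisely where Theorem~\ref{CauchyIntAp} and Lemma~\ref{ResMod} enter: one closes the contour on the side where $\Re(s)\to\pm\infty$ makes $e^{ts}$ decay, picks up no residues (the only pole $s=\alpha$ of $(\alpha-s)^{-2}$ lies outside $W_c$, and $S_R^{-1}(s,T)$ is slice regular off $\sigma_S(T)\subset W_c$), and uses the decay $\|S_R^{-1}(s,T)\|=O(1/|s|)$ to kill the contribution at infinity after the $(\alpha-s)^{-2}$ factor.

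Concretely, the steps in order: (1) record that $\alpha\in\rho_S(T)\cap\rr$ and $g_t$ is right slice regular on a neighborhood of $\overline{W_c}$; (2) exhibit $g_t$ as a quaternionic bilateral Laplace--Stieltjes transform of an explicit measure in $\mathbf{S}(T)$, using Remark~\ref{SRCompatible}, Corollary~\ref{SResDeriv}, Theorem~\ref{p.642Lemma2}, and Lemma~\ref{ConProd}; (3) compute $g_t(T)$ via Definition~\eqref{CalcDefi} and Lemma~\ref{PFCLT}(ii) to get $g_t(T)v=\Z_T(t)(\alpha\id-T)^{-2}v$ for $v$ in a suitable domain, hence $g_t(T)(\alpha\id-T)^2u=\Z_T(t)u$ for $u\in\dom(T^2)$; (4) show $g_t(T)=\frac1{2\pi}\int_{\partial(W_c\cap\C_I)}g_t(s)\,ds_I\,S_R^{-1}(s,T)$ by the contour-deformation argument based on Theorem~\ref{CauchyIntAp} (moving the two vertical lines towards $\pm\infty$ in the decaying half-plane) together with the resolvent decay estimate; (5) combine (3) and (4).

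The main obstacle is step~(4): one must carefully justify that pushing the contour to infinity is legitimate. For a fixed $t$, the integrand on $s=c+I\tau$ behaves like $e^{tc}e^{It\tau}\cdot O(\tau^{-2})\cdot O(\tau^{-1})$, which is integrable in $\tau$, but when one shifts $c$ to larger values the factor $e^{tc}$ grows, so the argument is not a naive ``let $c\to\infty$''; rather, for each sign of $t$ one closes the contour through the half-plane $\Re(s)\to-\mathrm{sgn}(t)\cdot\infty$ using that $(\alpha-s)^{-2}S_R^{-1}(s,T)$ decays like $|s|^{-3}$ there while $|e^{ts}|$ stays bounded on that side, and one verifies the arcs at infinity vanish. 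A secondary subtlety is the interplay of left- and right-slice regularity in the integrand—$g_t$ is right slice regular in $s$ and $s\mapsto S_R^{-1}(s,T)v$ is left slice regular—so that Theorem~\ref{CauchyIntAp} applies on the correct side; and one must keep track of the fact that $(\alpha\id-T)^2$ only makes sense on $\dom(T^2)$, so all identities involving it are asserted for $u\in\dom(T^2)$.
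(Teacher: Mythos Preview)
Your overall strategy—identify $g_t(s)=e^{ts}(\alpha-s)^{-2}$ as a Laplace--Stieltjes transform, compute $g_t(T)=\Z_T(t)(\alpha\id-T)^{-2}$ via Lemma~\ref{PFCLT}, and then match this with the contour integral—is different from the paper's and is in principle viable. The paper instead defines $F(t)u$ to be the right-hand side of \eqref{ZTInt}, computes its Laplace transform $\int_0^\infty e^{-pt}F(t)u\,dt$ by swapping the order of integration, and then evaluates the resulting $s$-integral by residues at the three singularities $\alpha$, $p_I$, $\overline{p_I}$ of $S_R^{-1}(p,s)(\alpha-s)^{-2}$ (using Lemma~\ref{ResMod} and the right $S$-resolvent equation), obtaining $S_R^{-1}(p,T)u$; the conclusion follows by injectivity of the Laplace transform. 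So the paper never needs to know that the contour integral itself realizes a Laplace--Stieltjes operator.

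The genuine gap in your proposal is step~(4). What you describe there—``close the contour on the side where $e^{ts}$ decays, pick up no residues, and let the arcs at infinity vanish''—would show that the contour integral over $\partial(W_c\cap\C_I)$ equals \emph{zero}, not $g_t(T)$. The two vertical lines of $\partial(W_c\cap\C_I)$ behave asymmetrically: for $t>0$ you can push the line $\Re(s)=-c$ to $-\infty$ and it disappears, but the line $\Re(s)=c$ cannot be pushed to the right (since $e^{ts}$ grows there) and cannot be pushed to the left without crossing $\sigma_S(T)$. You are left with a single line integral along $\Re(s)=c$, and you give no mechanism to identify it with $g_t(T)$. Note that the identity you want in step~(4) is exactly Proposition~\ref{propdue} specialized to $f=g_t$, and in the paper Proposition~\ref{propdue} is proved \emph{using} Proposition~\ref{propuno}; so you cannot simply quote it without circularity. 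A correct completion of step~(4) would require substituting the Laplace representation $S_R^{-1}(s,T)u=\int_0^\infty e^{-rs}\Z_T(r)u\,dr$ on the remaining line, interchanging the $r$- and $s$-integrals, and evaluating the resulting scalar inverse-Laplace integral $\frac{1}{2\pi}\int_{\Re(s)=c}e^{(t-r)s}(\alpha-s)^{-2}\,ds_I$ by residues (which \emph{does} pick up the pole at $\alpha$); this yields $(r-t)e^{-(r-t)\alpha}\indi_{\{r>t\}}$ and, after integrating in $r$, exactly $\Z_T(t)(\alpha\id-T)^{-2}u$. None of this is in your sketch, and your stated claim ``picks up no residues'' is incorrect at the crucial point.
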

\begin{proof}
We recall that
$$
S_R^{-1}(s,T)u=\int_0^\infty e^{-ts}\,\Z_T(t)u\,dt,\quad\Re(s) >\omega.
$$
Since $\|\Z_T(t)\| \leq M e^{\omega |t|}$, we get a bound for the $S$-resolvent operator by
\begin{equation}\label{SRUniBound}
\|S_R^{-1}(s,T)u\|=M \int_0^\infty  e^{(\omega -Re(s))t}\|u\|\, dt,\quad\Re(s) >\omega
\end{equation}
which assures that
$\|S_R^{-1}(s,T)\|$ is uniformly bounded on $\{s\in\hh: \Re(s) >\omega +\varepsilon\}$ for any $\varepsilon >0$. A similar consideration gives a uniform bound on
$\{s\in\hh: \Re(s) < -(\omega +\varepsilon)\}$.
Thanks to such bound the integral in \eqref{ZTInt} is well defined since the $(\alpha -s)^{-2}$ goes to zero with order $1/|s|^2$ as $s\to \infty$. We set
$$
F(t)u = \frac{1}{2\pi}\int_{\partial(W_c\cap \mathbb{C}_I)} e^{ts}(\alpha -s)^{-2}\,ds_I\,S_R^{-1}(s,T) (\alpha \id -T)^2  u
$$
for $u\in \dom (T^2)$ and we show that $F(t)u=\Z_T(t)u$ using the Laplace transform.
We first assume $t>0$. If $\Re(p)>c$ then
\begin{align*}
\int_0^\infty e^{-pt} F(t)u \,dt&=
\frac{1}{2\pi}\int_{0}^{\infty}e^{-pt}\int_{\partial(W_c\cap \mathbb{C}_I)}e^{ts}(\alpha -s)^{-2}\,ds_I\,S_R^{-1}(s,T) (\alpha \id -T)^2  u \,dt\\
&=\frac{1}{2\pi}\int_{\partial(W_c\cap \mathbb{C}_I)}\left(\int_{0}^{\infty}e^{-pt}e^{ts}dt \right)(\alpha -s)^{-2}\,ds_I\,S_R^{-1}(s,T) (\alpha \id -T)^2  u.
\end{align*}
Now observe that
$$
\int_0^\infty e^{-pt}e^{ts}\,dt=S_R^{-1}(p,s),
$$
so we have
\begin{align*}
\int_0^\infty  e^{-pt} F(t)u\,dt&=
\frac{1}{2\pi}\int_{\partial(W_c\cap \mathbb{C}_I)} S_R^{-1}(p,s)(\alpha -s)^{-2}\,ds_I\,  S_R^{-1}(s,T)(\alpha \id -T)^2 u.
\end{align*}
We point out that the function $s\mapsto  S_R^{-1}(p,s)(\alpha -s)^{-2}$ is right slice regular for $s\notin[p]\cup\{\alpha\}$ and that the function $s\mapsto S_R^{-1}(s,T)(\alpha\id -T)^2u$ is left slice regular on $\rho_S(T)$.
Observe that the integrand is such that  $(\alpha -s)^{-2}$ goes to zero with order $1/|s|^2$ as $s\to \infty$. By applying Theorem~\ref{CauchyIntAp}, the appropriate version of Cauchy's integral theorem,   we can replace the  path of integration by small negatively oriented circles of radius $\delta>0$ around the singularities of the integrand in the plane $\cc_I$. These singularities are $\alpha$, $p_I = p_0 + I p_1$ and $\overline{p}$ if $I \neq \pm I_p$. We obtain
\begin{align*} \int_0^\infty  e^{-pt}F(t)u\,dt=&-
 \frac{1}{2\pi}\int_{\partial(U_\delta(\alpha)\cap \mathbb{C}_I)}S_R^{-1}(p,s)(\alpha -s)^{-2}\,ds_I\, S_R^{-1}(s,T)(\alpha \id -T)^2u \\
&- \frac{1}{2\pi}\int_{\partial(U_\delta(p_I) \cap\mathbb{C}_I)}S_R^{-1}(p,s)(\alpha -s)^{-2}\,ds_I\, S_R^{-1}(s,T)(\alpha \id -T)^2u \\
&- \frac{1}{2\pi}\int_{\partial(U_\delta(\overline{p_I})\cap \mathbb{C}_I)}S_R^{-1}(p,s)(\alpha -s)^{-2}\,ds_I\, S_R^{-1}(s,T)(\alpha \id -T)^2u
  \end{align*}
Observe that the integrand has a pole of order $2$ at $\alpha$ and poles of order $1$  at $p_I$ and $\overline{p_I}$  (except if $I = \pm I_p$). Applying Lemma \ref{ResMod} with $f(s) = S_R^{-1}(p,s)(\alpha -s)^{-2}$ and $g(s) =  S_R^{-1}(s,T)(\alpha\id -T)^2 u$ yields therefore
\begin{align*}
\int_0^\infty e^{-pt} F(t)u\,dt=& - \Res_{\alpha}\left(S_R^{-1}(p,s)(\alpha-s)^{-2}  \right)S_R^{-1}(\alpha,T)(\alpha\id -T)^2u\\
&-\Res_{\alpha}\left(S_R^{-1}(p,s)(s-\alpha)^{-1}\right)\left(\partial_s S_R^{-1}(\alpha,T)(\alpha\id -T)^2u\right)\\
&- \Res_{p_I}\left(S_R^{-1}(p,s)(\alpha-s)^{-2}\right) S_R^{-1}(p_I,T)(\alpha\id -T)^2u \\
&-\Res_{\overline{p_I}}\left(S_R^{-1}(p,s)(\alpha-s)^{-2}\right)  S_R^{-1}(\overline{p_I},T) (\alpha\id -T)^2u.
\end{align*}
We calculate the residues of the function $f(s)=S_R^{-1}(p,s)(\alpha-s)^{-2}$.  Since it has a pole of order two at $\alpha$, we have
\[ \Res_{\alpha}(f_I) = \lim_{\C_I\ni s\to\alpha} \frac{\partial}{\partial s}f_I(s)(s-\alpha)^2 = \lim_{\C_I\ni s\to\alpha}\frac{\partial}{\partial s} S_R^{-1}(p,s)=\lim_{\C_I\ni s\to\alpha} S_R^{-2}(p,s) = S_R^{-2}(p,\alpha)
\]
and
\[\Res_{\alpha}(f_I(s)(s-\alpha))  = \lim_{\C_I\ni s\to\alpha} f_I(s) (s-\alpha)^2= S_R^{-1}(p,\alpha).\]
The point $p_I = p_0 + I p_1$ is a pole of order $1$. Thus, setting $s_{I_p} = s_0 + I_p s_1\in \C_{I_p}$ for $s = s_0 + I s_1\in\C_I$, we deduce from the Representation formula (see Theorem \ref{RepFo})
\begin{align*}\Res_{p_I}(f_I)& = \lim_{\C_I\ni  s\to p_I}f_I(s)(s-p_I) = \lim_{\C_I\ni s\to p_I} S_R^{-1}(p,s) (\alpha-s)^{-2}(s-p_I) \\
&= \lim_{\C_I\ni s\to p_I} \left[S_R^{-1}(p,s_{I_p}) (1-I_pI)\frac12 + S_R^{-1}(p,\overline{s_{I_p}})  (1+I_pI)\frac12\right] (s-p_I)(\alpha-s)^{-2}\\
&=\left[ \lim_{\C_I\ni s\to p_I} (p-s_{I_p})^{-1} (1-I_pI)(s-p_I) + \lim_{\C_I\ni s\to p_I} (p-\overline{s_{I_p}})^{-1}(1 + I_pI)(s-p_I)\right]\frac12 (\alpha-p_I)^{-2}\\
&=\left[\lim_{\C_I\ni s\to p_I} (p-s_{I_p})^{-1}(1-I_pI)(s-p_I)\right]\frac12 (\alpha-p_I)^{-2}.
\end{align*}
We calculate
\begin{align*}
&\lim_{\C_I\ni s\to p_I} (p-s_{I_p})^{-1}(1-I_pI)(s-p_I) \\
=&\lim_{\C_I\ni s\to p_I}(p-s_{I_p})^{-1}(1-I_pI)(s_0 - p_0) + (p-s_{I_p})^{-1}(1-I_pI)I(s_1 - p_1)\\
=&\lim_{\C_I\ni s\to p_I}(p-s_{I_p})^{-1}(s_0 - p_0)(1-I_pI) + (p-s_{I_p})^{-1}(s_1 - p_1)(I+I_p)\\
=&\lim_{\C_I\ni s\to p_I}(p-s_{I_p})^{-1}(s_0 - p_0)(1-I_pI) + (p-s_{I_p})^{-1}(s_1 - p_1)I_p(-I_pI+1)\\
=&\lim_{\C_I\ni s\to p_I}(p-s_{I_p})^{-1}(s_0 - p_0+ I_p(s_1-p_1))(1-I_pI) \\
=&\lim_{\C_I\ni s\to p_I}(p-s_{I_p})^{-1}(s_{I_p}-p)(1-I_pI) = - (1-I_pI) \\
\end{align*}
and finally obtain
\[ \Res_{p_I}(f_I) = - \frac12 (1-I_pI)(\alpha-p_I)^{-2}.\]
Replacing $I$ by $-I$ in this formula yields
\[ \Res_{\overline{p_I}}(f_I) = - \frac12(1+I_pI)(\alpha-\overline{p_I})^{-2} .\]
Note that these formulas also hold true if $I = \pm I_p$. In this case either $\Res_{p_I}(f_I) = - (\alpha-p_I)^{-2}$ and $\Res_{\overline{p_I}}(f_I) = 0$ because $\overline{p_I}$ is a removable singularity of $f_I$ or vice versa. Moreover,
\[S_R^{-1}(\alpha,T)(\alpha\id -T)^2u = (\alpha\id  - T)^{-1}(\alpha\id -T)^2u = (\alpha\id-T)u\]
 and
\[{\partial_s S_R^{-1}(\alpha,T)(\alpha\id -T)^2u =} {- S_R^{-2}(\alpha,T)(\alpha\id -T)^2u =} {-(\alpha\id  -T)^{-2}(\alpha\id -T)^2u} = -u\]
 because $\alpha$ is real. Putting these pieces together, we get
 \begin{align*}
\int_0^\infty e^{-pt} F(t)u\, dt=&  -S_R^{-2}(p,\alpha)S_R^{-1}(\alpha,T)(\alpha\id -T)^2u + S_R^{-1}(p,\alpha)S_R^{-2}(\alpha,T)(\alpha\id -T)^2u\\
&+ \frac12(1-I_pI)(\alpha-p_I)^{-2}S_R^{-1}(p_I,T)(\alpha\id -T)^2u \\
&+ \frac12(1+I_pI)(\alpha-\overline{p_I})^{-2}S_R^{-1}(\overline{p_I},T)(\alpha\id -T)^2u=\\
=&- (p-\alpha)^{-2}(\alpha\id -T)u + (p-\alpha)^{-1}u+ (p-\alpha)^{-2}S_R^{-1}(p,T)(\alpha\id -T)^{2}u,
 \end{align*}
where that last identity follows from the Representation Formula (see Theorem \ref{RepFo}) because the mapping
\[p\mapsto (\alpha-p)^{-2}S_R^{-1}(p,T)(\alpha\id -T)^2u\]
 is left slice regular. We factor out  $(p-\alpha)^{-2} $ on the left and obtain
 \begin{align*}
\int_0^\infty e^{-pt} F(t)u\, dt= &  (p-\alpha)^{-2}\left(-(\alpha\id -T)u + (p -\alpha )u+ S_R^{-1}(p,T)(\alpha\id -T)^{2}u\right)\\
= &  (p-\alpha)^{-2}\left(pu -2\alpha u +Tu + S_R^{-1}(p,T)(\alpha\id -T)^{2}u\right).
 \end{align*}
Recall that we assumed that $u\in\dom(T^2)$. Hence, $Tu \in \dom(T)$ and so we can apply the right $S$-resolvent \eqref{reseqR} equation twice to obtain
\begin{align*}&S_R^{-1}(p,T)(\alpha\id -T)^{2}u=S_R^{-1}(p,T)(T^2u - 2\alpha Tu + \alpha^2u )\\
&=pS_R^{-1}(p,T)Tu-Tu - 2\alpha pS_R^{-1}(p,T) u+2\alpha u + \alpha^2S_R^{-1}(p,T)u\\
&=p^2S_R^{-1}(p,T)u-pu -Tu - 2\alpha pS_R^{-1}(p,T) u+2\alpha u + \alpha^2S_R^{-1}(p,T)u\\
&=(p-\alpha)^2S_R^{-1}(p,T)u-pu  +2\alpha u -Tu.
\end{align*}
So finally
 \begin{align*}
\int_0^\infty e^{-pt} F(t)u \,dt= &  (p-\alpha)^{-2} (p-\alpha)^2S_R^{-1}(p,T)u= S_R^{-1}(p,T)u.
 \end{align*}
 Hence,
 \[\int_0^\infty e^{-pt}F(t)u \,dt = S_R^{-1}(p,T)u = \int_{0}^{\infty}e^{-pt}\,\Z_T(t)u\,dt, \]
  for $\Re(p)>c$, which implies $F(t)u = \Z_T(t)u$ for $u\in D(T^2)$ and $t\geq0$.

Applying the same reasoning to the semigroup $(\Z(-t))_{t \geq 0}$, with infinitesimal generator $-T$, we see that
\begin{align*}
 \Z(-t)u& = \frac{1}{2\pi}\int_{\partial(W_c\cap \mathbb{C}_I)}e^{ts}(\alpha -s)^{-2} \,ds_I\, S_R^{-1}(s,-T)  (\alpha \id +T)^2 u\\
 &= \frac{1}{2\pi}\int_{\partial(W_c\cap \mathbb{C}_I)} e^{-ts}(\alpha + s)^{-2} \,ds_I\, S_R^{-1}(s,T)(\alpha \id +T)^2u,
 \end{align*}
where the second equality follows by substitution  of $s$ by $-s$ because $-S_R^{-1}(-s,-T) = -S_R^{-1}(s,T)$. Replacing $\alpha$ by $-\alpha$ and $-t$ by $t$, we finally find
\begin{align*}
 \Z(t)u
 &= \frac{1}{2\pi}\int_{\partial(W_c\cap \mathbb{C}_I)}e^{ts}(\alpha -s)^{-2}\,ds_I\,S_R^{-1}(s,T)  (\alpha \id -T)^2  u
 \end{align*}
 also for $t<0$.

\end{proof}

\begin{proposition}\label{propdue}
Let $\alpha$ and $c$ be real numbers such that $\omega<c<|\alpha|$. If $f\in\mathbf{V}(T)$ is right slice regular on $\overline{W_c}$, then for any $u\in D(T^2)$ we have
\begin{equation}
f(T)u=\frac{1}{2\pi}\int_{\partial(W_c\cap \mathbb{C}_I)} f(s)(\alpha -s)^{-2}\,ds_I\, S_R^{-1}(s,T)(\alpha \id -T)^2 u.
\end{equation}
\end{proposition}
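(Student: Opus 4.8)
The plan is to unfold the definition~\eqref{CalcDefi} of $f(T)$, to substitute the integral representation of $\Z_T(-t)u$ furnished by Proposition~\ref{propuno}, and then to interchange the order of integration. Write $f=\lap(\mu)$ with $\mu\in\mathbf{S}(T)$ and $\mu=h\,d|\mu|$ as in Corollary~\ref{ACVariation}, so that $f(T)u=\int_{\rr}h(t)\,\Z_T(-t)u\,d|\mu|(t)$. Replacing $t$ by $-t$ in~\eqref{ZTInt}, which is legitimate since $\omega<c<|\alpha|$, gives
\[
\Z_T(-t)u=\frac{1}{2\pi}\int_{\partial(W_c\cap\C_I)}e^{-ts}(\alpha-s)^{-2}\,ds_I\,S_R^{-1}(s,T)(\alpha\id-T)^2u ,
\]
and since left multiplication by the fixed quaternion $h(t)$ is bounded and $\rr$-linear, it commutes with the contour integral. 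Therefore
\[
f(T)u=\frac{1}{2\pi}\int_{\rr}\left(\int_{\partial(W_c\cap\C_I)}h(t)\,e^{-ts}(\alpha-s)^{-2}\,ds_I\,S_R^{-1}(s,T)(\alpha\id-T)^2u\right)d|\mu|(t),
\]
and exchanging $\int_{\rr}d|\mu|(t)$ with $\int_{\partial(W_c\cap\C_I)}ds_I$ turns the inner integral into $\big(\int_{\rr}h(t)\,e^{-ts}\,d|\mu|(t)\big)(\alpha-s)^{-2}S_R^{-1}(s,T)(\alpha\id-T)^2u=f(s)(\alpha-s)^{-2}S_R^{-1}(s,T)(\alpha\id-T)^2u$, which is the asserted formula.

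To justify the application of Fubini's theorem one needs absolute convergence of the double integral. Along $\partial(W_c\cap\C_I)$ the factor $(\alpha-s)^{-2}$ decays like $|s|^{-2}$; by~\eqref{SRUniBound} and its mirror image, $\|S_R^{-1}(s,T)\|$ is uniformly bounded on $\{|\Re(s)|\geq c\}$ because $c>\omega$; and $|h(t)\,e^{-ts}|=e^{\mp ct}\leq e^{c|t|}$ on the two lines $\Re(s)=\pm c$. Hence the double integral is dominated by a constant multiple of $\|(\alpha\id-T)^2u\|\int_{\rr}e^{c|t|}\,d|\mu|(t)$, which is finite as soon as $c\leq\omega+\varepsilon$; in that case Fubini applies and $\int_{\rr}h(t)\,e^{-ts}\,d|\mu|(t)=\lap(\mu)(s)=f(s)$ for every $s$ on the contour, which completes the argument.

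It remains to remove the restriction $c\leq\omega+\varepsilon$, and I expect this to be the main obstacle. For a general $c$ I would first run the above argument with $c$ replaced by some $c_0$ satisfying $\omega<c_0<\min\{c,\omega+\varepsilon\}$ (note $c_0<c<|\alpha|$, so Proposition~\ref{propuno} still applies), obtaining
\[
f(T)u=\frac{1}{2\pi}\int_{\partial(W_{c_0}\cap\C_I)}f(s)(\alpha-s)^{-2}\,ds_I\,S_R^{-1}(s,T)(\alpha\id-T)^2u ,
\]
and then deform the contour from $\partial(W_{c_0}\cap\C_I)$ to $\partial(W_c\cap\C_I)$. In the closed region $\{c_0\leq|\Re(s)|\leq c\}$ the map $s\mapsto f(s)(\alpha-s)^{-2}$ is right slice regular, since $f$ is right slice regular on a neighbourhood of $\overline{W_c}$ and $\alpha\notin\overline{W_c}$ because $|\alpha|>c$, and the map $s\mapsto S_R^{-1}(s,T)(\alpha\id-T)^2u$ is left slice regular there, since $\sigma_S(T)\subset\{|\Re(s)|<\omega\}$ and $c_0>\omega$. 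One may thus apply Theorem~\ref{CauchyIntAp} on each truncated rectangle $\{c_0\leq\Re(s)\leq c,\ |\Im(s)|\leq N\}$ and its reflection, and let $N\to\infty$. The delicate point is that the horizontal connecting segments must contribute a term tending to $0$: the $|s|^{-2}$ decay of $(\alpha-s)^{-2}$ and the uniform bound on $\|S_R^{-1}(\cdot,T)\|$ handle part of this, but one also needs a growth bound for $f$ along vertical lines in $\{c_0\leq|\Re(s)|\leq c\}$. On $\{|\Re(s)|<\omega+\varepsilon\}$ such a bound is immediate from $|f(\sigma+I\tau)|\leq\int_{\rr}e^{-\sigma t}\,d|\mu|(t)$, which is bounded in $\tau$; on the remaining thin strips $\{\omega+\varepsilon\leq|\Re(s)|\leq c\}$, where the Laplace--Stieltjes integral need not converge absolutely, one has to use the slice regularity of $f$ on the neighbourhood of $\overline{W_c}$ to control its growth. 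Once the two contour integrals are identified, the proof is complete.
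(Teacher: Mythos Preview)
Your approach is essentially the paper's approach: write $f=\lap(\mu)$, substitute the integral representation of $\Z_T(-t)u$ from Proposition~\ref{propuno}, and interchange the order of integration via Fubini. The only difference is the direction (you start from $f(T)u$ and arrive at the contour integral; the paper starts from the contour integral and arrives at $f(T)u$), which is immaterial.

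Where you diverge from the paper is in the justification of Fubini. You correctly observe that the straightforward estimate gives a bound proportional to $\int_{\rr}e^{c|t|}\,d|\mu|(t)$, and you worry that this may be infinite when $c>\omega+\varepsilon(\mu)$; you then embark on a contour-deformation argument from $\partial W_{c_0}$ to $\partial W_c$ which you do not fully complete (the missing vertical growth bound on $f$ in the thin strips $\{\omega+\varepsilon\leq|\Re(s)|\leq c\}$ is a genuine difficulty, since slice regularity on a neighbourhood of $\overline{W_c}$ alone gives no control as $|\Im(s)|\to\infty$). The paper dispenses with all of this by asserting directly that the hypothesis ``$f$ is right slice regular on $\overline{W_c}$'' forces
\[
\int_{\rr}e^{c|t|}\,d|\mu|(t)<\infty,
\]
so that Fubini applies immediately on $\partial(W_c\cap\C_I)$ itself and no deformation is needed. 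In other words, the paper treats the strip of slice regularity of $\lap(\mu)$ as coinciding with the strip of absolute convergence of the defining integral. If you accept this implication, your entire second half (the $c_0$-to-$c$ deformation) is unnecessary; your proof then collapses to exactly the paper's argument.
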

\begin{proof}
We recall that $f$ can be represented as
$$
f(s)=\int_\mathbb{R}d\mu(t)\,e^{-st}
$$
with $\mu\in\mathbf{S}(T)$. Using Proposition \ref{propuno} we obtain
\begin{align*}
&\frac{1}{2\pi}\int_{\partial(W_c\cap \mathbb{C}_I)}f(s)(\alpha -s)^{-2}\,ds_I\,S_R^{-1}(s,T)(\alpha \id -T)^2 u\\
=&\frac{1}{2\pi}\int_{\partial(W_c\cap \mathbb{C}_I)}\int_\mathbb{R}d\mu(t)\,e^{-st} (\alpha -s)^{-2}\,ds_I\, S_R^{-1}(s,T)(\alpha \id -T)^2 u\\
=&\int_\mathbb{R}d\mu(t)
 \left(\frac{1}{2\pi}\int_{\partial(W_c\cap \mathbb{C}_I)}e^{-st}(\alpha -s)^{-2}\,ds_I\, S_R^{-1}(s,T) (\alpha \id -T)^2  u\right) \\
=&\int_\mathbb{R}d\mu(t)\, \Z_T(-t)u=f(T)u.
\end{align*}
Note that Fubini's theorem allows us to exchange the order of integration as the $S$-resolvent $S_{R}^{-1}(s,T)$ is uniformly bounded on $\partial(W_c\cap\C_I)$ because of \eqref{SRUniBound}  and so  there exists a constant $K>0$ such that
\begin{align*}
&\frac{1}{2\pi}\int_{\partial(W_c\cap \mathbb{C}_I)}\int_\mathbb{R}\left\|d\mu(t)\,e^{-st} (\alpha -s)^{-2}\,ds_I\, S_R^{-1}(s,T)(\alpha \id -T)^2 u\right\| \\
\leq&\frac{1}{2\pi}\int_{\partial(W_c\cap \mathbb{C}_I)}\int_\mathbb{R}d|\mu|(t)\,e^{-\Re(s)t} \frac{1}{|\alpha -s|^{-2}} \|S_R^{-1}(s,T)\| \|(\alpha \id -T)^2 u\| ds\\
\leq&K\int_{\partial(W_c\cap \mathbb{C}_I)}\int_\mathbb{R}d|\mu|(t)\,e^{c|t|} \frac{1}{(1+|s|)^{2}}\, ds.
\end{align*}
This integral is finite because the fact that $f$ is right slice regular on $\overline{W_c}$ implies
\[\int_\mathbb{R}d|\mu|(t)\,e^{c | t|} <\infty.\]

\end{proof}

\begin{theorem}\label{CompSCalc}
Let $f\in \mathbf{V}(T)$ and suppose that $f$ is right slice regular at infinity.
 Then the operator $f(T)$ defined using the Laplace transform equals the operator $f[T]$ obtained from the $S$-functional calculus.
\end{theorem}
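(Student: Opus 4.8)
The plan is to start from the integral representation of $f(T)$ over the boundary of a strip provided by Proposition~\ref{propdue}, to deform that contour onto the boundary of a bounded $T$-admissible slice domain surrounding $\sigma_S(T)$, and then to use the $S$-resolvent equation to absorb the auxiliary factors $(\alpha-s)^{-2}$ and $(\alpha\id-T)^2$, so as to land exactly on the integral representing $f[T]$ in the $S$-functional calculus. First one records that by Theorem~\ref{p.642Lemma2}(i) the function $f$ is right slice regular on the strip $W_{\omega+\varepsilon}$, which contains $\sigma_S(T)$ (since $\sigma_S(T)\subset\{|\Re(s)|\le\omega\}$); together with the hypothesis that $f$ is right slice regular at infinity this gives $f\in\mon^R_{\sigma_S(T)\cup\{\infty\}}$, so that $f[T]$ is defined. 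Moreover $|f(s)|\le\int_{\rr}e^{c|t|}\,d|\mu|(t)<\infty$ for $|\Re(s)|\le c<\omega+\varepsilon$, so $f$ is bounded on $\overline{W_c}$. Fix now a real $\alpha$ with $|\alpha|$ large enough that $\alpha\notin\overline{W_{\omega+\varepsilon}}$ and $f$ is slice regular near $\alpha$, and a real $c$ with $\omega<c<\omega+\varepsilon$. By Proposition~\ref{propdue}, for every $u\in\dom(T^2)$,
\begin{equation}\label{planA}
f(T)u=\frac1{2\pi}\int_{\partial(W_c\cap\C_I)}f(s)(\alpha-s)^{-2}\,ds_I\,S_R^{-1}(s,T)(\alpha\id-T)^2u.
\end{equation}

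The first real step is the contour deformation. On $\overline{W_c}\setminus\sigma_S(T)$ the map $s\mapsto f(s)(\alpha-s)^{-2}$ is right slice regular (using $\alpha\notin\overline{W_c}$, that $\overline{W_c}\subset W_{\omega+\varepsilon}$, and that $s\mapsto(\alpha-s)^{-2}$ is intrinsic), while $s\mapsto S_R^{-1}(s,T)(\alpha\id-T)^2u$ is left slice regular (as $\overline{W_c}\setminus\sigma_S(T)\subset\rho_S(T)$). Since $|(\alpha-s)^{-2}|=O(|s|^{-2})$, $f$ is bounded on $\overline{W_c}$, and $\|S_R^{-1}(s,T)\|$ is uniformly bounded near $\partial(W_c\cap\C_I)$ by \eqref{SRUniBound}, the integrand in \eqref{planA} is $O(|s|^{-2})$ there, so the arcs joining the two vertical lines at height $\pm R$ contribute a term vanishing as $R\to\infty$. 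Theorem~\ref{CauchyIntAp} then lets us replace $\partial(W_c\cap\C_I)$ in \eqref{planA} by $\partial(U\cap\C_I)$ for any bounded $T$-admissible slice domain $U$ with $\overline U\subset W_c$ (note $\alpha\notin W_c$).

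Next I would use the $S$-resolvent equation \eqref{reseqR}, applied twice exactly as in the proof of Proposition~\ref{propuno}, in the form
\[
S_R^{-1}(s,T)(\alpha\id-T)^2u=(s-\alpha)^2S_R^{-1}(s,T)u-su+2\alpha u-Tu,\qquad u\in\dom(T^2),\ s\in\rho_S(T),
\]
substitute it into the deformed \eqref{planA}, use $(\alpha-s)^{-2}(s-\alpha)^2=1$, and split the (now finite-length, bounded) integral to obtain
\begin{align*}
f(T)u={}&\frac1{2\pi}\int_{\partial(U\cap\C_I)}f(s)\,ds_I\,S_R^{-1}(s,T)u\\
&+\frac1{2\pi}\int_{\partial(U\cap\C_I)}f(s)(\alpha-s)^{-2}\,ds_I\,(2\alpha u-Tu)-\frac1{2\pi}\int_{\partial(U\cap\C_I)}f(s)(\alpha-s)^{-2}\,ds_I\,su.
\end{align*}
Both correction integrals vanish: the maps $s\mapsto f(s)(\alpha-s)^{-2}$ and $s\mapsto f(s)(\alpha-s)^{-2}s$ are right slice regular near $\overline U$ (because $(\alpha-s)^{-2}$ and $s\mapsto s$ are intrinsic and the product of a right slice regular function with an intrinsic function is right slice regular), and for any such $h$ the Splitting Lemma~\ref{SplitLem} together with the classical Cauchy theorem give $\int_{\partial(U\cap\C_I)}h(s)\,ds_I=0$; in the last integral one first moves $s$ to the left of $ds_I$, which is legitimate since $s$ and $ds_I$ both lie in $\C_I$ along $\partial(U\cap\C_I)$. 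Hence $f(T)u=\frac1{2\pi}\int_{\partial(U\cap\C_I)}f(s)\,ds_I\,S_R^{-1}(s,T)u$ for all $u\in\dom(T^2)$, and the right-hand side is precisely the integral representation of $f[T]$ furnished by the $S$-functional calculus; since $\dom(T^2)$ is dense in $V$ and both $f(T)$ and $f[T]$ are bounded, $f(T)=f[T]$ on all of $V$.

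The step I expect to be the main obstacle is this contour manipulation together with the final identification: one must check rigorously that the arcs at infinity give no contribution, that no singularity of the integrand is crossed while pushing $\partial(W_c\cap\C_I)$ onto $\partial(U\cap\C_I)$, and that the surviving bounded-contour integral genuinely matches the definition of $f[T]$ in the (possibly unbounded) $S$-functional calculus — including the correct bookkeeping of the value $f(\infty)$ and of the orientation of the contour. If necessary this can be arranged by first reducing to the case $f(\infty)=0$ using additivity of both calculi (each sends the constant $f(\infty)$ to $f(\infty)\id$), or by carrying out the whole argument in the picture of the bounded operator $(T-\alpha\id)^{-1}$. The remaining manipulations, although they demand care with the noncommutativity of the quaternionic factors, are essentially routine.
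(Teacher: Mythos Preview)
Your main line of argument has a genuine gap. The contour deformation from $\partial(W_c\cap\C_I)$ to $\partial(U\cap\C_I)$ for a \emph{bounded} $T$-admissible slice domain $U$ requires $\sigma_S(T)\subset U$. But for the infinitesimal generator of a strongly continuous group one only knows $\sigma_S(T)\subset\{|\Re(s)|\le\omega\}$; the $S$-spectrum may well be unbounded in the imaginary directions (think of the translation group on $L^2(\rr)$), so no bounded $T$-admissible $U$ need exist. And you cannot bypass the deformation by splitting directly on the strip: after inserting the $S$-resolvent identity, the individual pieces $\int_{\partial(W_c\cap\C_I)} f(s)\,ds_I\,S_R^{-1}(s,T)u$ and $\int_{\partial(W_c\cap\C_I)} f(s)(\alpha-s)^{-2}s\,ds_I\,u$ are integrals of merely bounded (respectively $O(|s|^{-1})$) functions over infinite lines and do not converge separately. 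So the splitting-and-cancellation scheme breaks down precisely in the interesting case of unbounded spectrum.

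The paper avoids all of this with a much shorter argument that stays on the strip contour throughout. It sets $g(s)=f(s)(\alpha-s)^{-2}$, notes that $g(\infty)=0$ because $f$ is slice regular at infinity, and invokes the integral representation of the $S$-functional calculus for unbounded operators directly over $\partial(W_c\cap\C_I)$ to identify the right-hand side of Proposition~\ref{propdue} as $g[T](\alpha\id-T)^2u$. Then the product rule of the $S$-functional calculus (with the intrinsic factor $(\alpha-s)^{-2}$) gives $g[T]=f[T](\alpha\id-T)^{-2}$, so $f(T)u=f[T]u$ on $\dom(T^2)$, and density finishes. Your own closing suggestion to ``carry out the whole argument in the picture of the bounded operator $(T-\alpha\id)^{-1}$'' is exactly this idea; that is the route that works, and the explicit resolvent-equation manipulation can be dropped.
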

\begin{proof}
Recall that we denote functions of operators obtained by the $S$-functional calculus with square brackets in order to distinguish them from those obtained by the calculus of the present paper. The $S$-functional calculus for unbounded operators satisfies
$$
g[T]=g(\infty)\id +\int_{\partial(W_c\cap \mathbb{C}_I)}g(s)\,ds_I\, S_R^{-1}(s,T).
$$
Consider $\alpha\in\R$ with $c<|\alpha|$ and observe that the function
$$
g(s):=f(s)(\alpha-s)^{-2}
$$
is right slice regular and satisfies $\lim_{s\to \infty}g(s)=0$ since $f$ is right slice regular at infinity. By the theorem of the product, we get
$$
{f}[T](\alpha \id -T)^{-2}u=\frac{1}{2\pi}\int_{\partial(W_c\cap \mathbb{C}_I)} f(s)(\alpha-s)^2\,ds_I\,S_R^{-1}(s,T) u.
$$
But by Proposition \ref{propdue}, it is
$$
f(T)u=\frac{1}{2\pi}\int_{\partial(W_c\cap \mathbb{C}_I)}f(s)(\alpha-s)^2 \,ds_I\, S_R^{-1}(s,T)(\alpha \id -T)^{2}u.
$$
for $u\in D(T^2)$ and so
$$
f[T]u=f(T)u, \qquad\text{ for }u\in D(T^2).
$$
Since  $D(T^2)$ is dense in $V$ and since the operators $f[T]$ and $f(T)$ are bounded we get $f[T]=f(T)$.

\end{proof}

\section{The inversion of the operator $f(T)$}

To study the inversion of an operator we consider a sequence of quaternionic polynomials $P_n(s)$ such that
$\lim_{n\to\infty}P_n(s)f(s)=1$. We point out that in general the pointwise product $P_n(s)f(s)$ is not slice regular and therefore we must limit ourselves to the case that $f$ is an intrinsic function.
The main goal of this section is to deduce sufficient conditions such that
 $$
 \lim_{n\to\infty}P_n(T)f(T)u=u, \ \ \text{ for  every $u\in V$}.
 $$

\begin{lemma}\label{TnDense}
Let $T\in\closOP (V)$ such that $\rho_S(T)\cap\R\neq\emptyset$. Then $\dom (T^n)$ is dense in $V$ for every $n\in\nn$.
\end{lemma}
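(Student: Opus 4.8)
The plan is to proceed by induction on $n$, using as a key tool the bounded operator $A = (T - \alpha\id)^{-1}$ for a fixed $\alpha \in \rho_S(T)\cap\R$, which exists by hypothesis. The base case $n = 1$ is part of the standard theory: since $T\in\closOP(V)$ means $T$ is closed with dense domain, $\dom(T)$ is dense in $V$ by definition. For the inductive step, assume $\dom(T^{n-1})$ is dense in $V$; I want to show $\dom(T^n)$ is dense.

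The central observation is that $\ran(A^n) = \dom(T^n)$. Indeed, for $\alpha$ real we have $S_R^{-1}(\alpha,T) = (\alpha\id - T)^{-1} = -A$, so $A$ is a bounded operator mapping $V$ onto $\dom(T)$, and since $\alpha$ is real, $A$ commutes with $T$ on $\dom(T)$ in the appropriate sense; iterating, $A^n$ maps $V$ bijectively onto $\dom(T^n)$. (One should verify this carefully: if $v\in\dom(T^n)$ then $(\alpha\id - T)v \in \dom(T^{n-1})$, and by the inductive description $v = A^n w$ for $w = (\alpha\id - T)^n v$; conversely $A^n w\in\dom(T^n)$ for all $w$ because applying $\alpha\id - T$ to $A^k w$ lands in $\ran(A^{k-1}) = \dom(T^{k-1})$.) Thus proving density of $\dom(T^n)$ amounts to proving density of $\ran(A^n)$.

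Now I would argue that $\ran(A^n)$ is dense using density of $\ran(A) = \dom(T)$ together with boundedness of $A$. Given $v\in V$ and $\varepsilon > 0$, first pick $w_1\in\ran(A)$ with $\|v - w_1\| < \varepsilon/2$, say $w_1 = Au_1$. Then approximate $u_1$ by some $w_2 \in \ran(A)$ within $\varepsilon/(2\|A\|)$ (or within $\varepsilon/(2(1+\|A\|))$ to avoid dividing by zero), so that $\|w_1 - Aw_2\| = \|A(u_1 - w_2)\| < \varepsilon/2$ and $Aw_2 \in \ran(A^2)$; hence $\ran(A^2)$ is dense. Repeating this $n$ times — at each stage using that $\ran(A)$ is dense and $A$ is bounded to push the approximation one power deeper — shows $\ran(A^n)$ is dense in $V$, and therefore $\dom(T^n) = \ran(A^n)$ is dense.

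The main obstacle I anticipate is the bookkeeping in the identification $\ran(A^n) = \dom(T^n)$: one must be careful that $A = (T-\alpha\id)^{-1}$ genuinely maps into $\dom(T)$ and interacts with powers of $T$ as expected, which relies on $\alpha$ being real so that $S_R^{-1}(\alpha,T)$ reduces to the ordinary resolvent $(\alpha\id - T)^{-1}$ and on standard properties of resolvents of closed operators. Once that identification is secure, the density argument is the routine ``boundedness propagates density through iterated ranges'' estimate sketched above.
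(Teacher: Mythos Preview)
Your proof is correct and rests on the same key identification as the paper's, namely $\dom(T^n) = \ran\big((\alpha\id - T)^{-n}\big)$ for real $\alpha\in\rho_S(T)$. The difference lies in how density of $\ran(A^n)$ is established. You argue directly: given that $\ran(A)$ is dense and $A$ is bounded, an iterated $\varepsilon$-approximation pushes density from $\ran(A)$ to $\ran(A^2)$ and so on. The paper instead argues by duality: if a continuous right-linear functional $u^*$ vanishes on $\dom(T^n)=S_R^{-n}(\alpha,T)V$, then $u^*S_R^{-n}(\alpha,T)=0$, and an induction peeling off one resolvent factor at a time (using density of $\dom(T)$) forces $u^*=0$; the quaternionic Hahn--Banach theorem then gives density. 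Your route is the more elementary one, avoiding Hahn--Banach entirely, while the paper's dual argument is slightly slicker once the functional-analytic machinery is in place. Either way the bookkeeping you flag---that for real $\alpha$ the $S$-resolvent reduces to the ordinary resolvent and that $\ran(A^n)=\dom(T^n)$---is used identically in both proofs and is indeed the only point requiring care.
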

\begin{proof}
If $\alpha\in\rho_S(T)\cap\R$, then $\dom (T^n) = \dom ((\alpha\id -T)^n) = (\alpha\id -T)^{-n}V = S_R^{-n}(\alpha,T)V$. Therefore
a continuous right linear functional $u^*\in V^*$ on $V$ vanishes on $\dom (T^n)$ if and only if the functional $u^* S_R^{-n}(\alpha,T)$ defined by $\left\langle u^*S_R^{-n}(\alpha,T),v\right\rangle = \left\langle u^*,S_R^{-n}(\alpha,T)v\right\rangle$ vanishes on the entire space $V$.

Now assume that $u^*S_R^{-1}(\alpha,T) =0$  for some $u^*\in V^*$. Then $u^*$ vanishes on $\dom (T)$ and since $\dom (T)$ is dense in $V$ we deduce $u^* = 0$. Since $u^* S_R^{-n}(s,T) = (u^*S_R^{-(n-1)}(s,T))S_R^{-1}(s,T)$ we obtain by induction that even the fact that $u^*S_R^{-n}(s,T) = 0$ for arbitrary $n\in\nn$ implies $ u^* = 0$.

Putting together these two observations, we see that $u^*\dom (T^n) = u^*S_R^{-n}(s,T)V = 0$ implies $u^* = 0$. By the quaternionic version of the Hahn-Banach Theorem (see for example Theorem 4.10.1 in \cite{MR2752913}) $\dom (T^n)$ is dense in $V$.

\end{proof}

\begin{lemma}\label{PolyLemma}
Let $P$ be an intrinsic polynomial of degree $m$ and let $f$ and $P_nf$ both belong to $\mathbf{V}(T)$.
Then $f(T)V\subseteq \dom (T^m)$ and
\[
P[T]f(T)u=(Pf)(T)u, \quad\text{ for  all } u\in V.
\]
\end{lemma}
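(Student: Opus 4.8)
The plan is to prove the statement by induction on the degree $m$ of the intrinsic polynomial $P$, reducing everything to the case of a linear factor $P(s) = s - \alpha$ with $\alpha\in\rr$ chosen in $\rho_S(T)$, and then iterating. First I would treat the base case $m=1$: write $P(s) = a_1 s + a_0$ with $a_0,a_1\in\rr$, since $P$ is intrinsic. The heart of the argument is to show $f(T)V\subseteq\dom(T)$ and that $Tf(T)u$ can be computed by differentiating under the integral sign in \eqref{CalcDefi}. Concretely, for $u\in V$ one has $f(T)u = \int_\rr d\mu(t)\,\Z_T(-t)u$, and the natural candidate is $Tf(T)u = \int_\rr d\mu^1(t)\,\Z_T(-t)u = (\partial_s f)(T)u$, using the measure $\mu^1$ from Theorem~\ref{p.642Lemma2}, which lies in $\mathbf{S}(T)$ and hence $\partial_s f\in\mathbf{V}(T)$. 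To justify this I would use that $T$ is closed: approximate the Bochner integral by Riemann-type sums $\sum_i \mu(E_i)\Z_T(-t_i)u$, note each summand lies in $\dom(T)$ because $\Z_T(-t)u\in\dom(T)$ is false in general — so instead I would differentiate the orbit directly, using that for $u\in V$ the map $t\mapsto\Z_T(-t)u$ need not be differentiable, and therefore pass through the $S$-resolvent: apply $S_R^{-1}(\alpha,T) = (\alpha\id-T)^{-1}$ and use that $(\alpha\id-T)^{-1}$ commutes with $\Z_T(-t)$, together with the computation $T S_R^{-1}(\alpha,T) = \alpha S_R^{-1}(\alpha,T) - \id$.

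Here is the cleaner route I would actually carry out. By Remark~\ref{SRCompatible}, for suitable $p$ with $\Re(p)$ outside $[-\omega,\omega]$ the function $s\mapsto S_R^{-1}(p,s)$ lies in $\mathbf{V}(T)$ and $\lap(\mu_p)(T) = S_R^{-1}(p,T)$. Since $S_R^{-1}(p,s)$ is intrinsic when $p$ is real, Lemma~\ref{PFCLT}(ii) gives, for $p = \alpha\in\rho_S(T)\cap\rr$ with $|\alpha| > \omega$,
\[
(g_\alpha f)(T) = g_\alpha(T) f(T) = S_R^{-1}(\alpha,T) f(T) = (\alpha\id - T)^{-1} f(T),
\]
where $g_\alpha(s) = S_R^{-1}(\alpha,s) = (\alpha-s)^{-1}$ and $g_\alpha f\in\mathbf{V}(T)$ by Lemma~\ref{ConProd}. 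Since $(\alpha\id-T)^{-1}$ maps $V$ into $\dom(T)$, this already shows $f(T)V = (\alpha\id-T)(\alpha\id-T)^{-1}f(T)V \subseteq (\alpha\id-T)\dom(T)\subseteq V$ — but more usefully, applying $(\alpha\id-T)$ on the left yields $f(T)u = (\alpha\id-T)(g_\alpha f)(T)u$, so $(g_\alpha f)(T)u\in\dom(T)$ and
\[
(\alpha\id - T)(g_\alpha f)(T)u = f(T)u.
\]
Now observe that the polynomial identity $P(s) = P(s)(\alpha - s)(\alpha-s)^{-1}$ combined with the fact that $(\alpha-s)P(s)$ has degree $m+1$ is the wrong direction; instead I would write, for the linear case $P(s) = s - \alpha$ directly, $P(s) f(s) = (s-\alpha) f(s)$, and note $(s-\alpha) f(s) = -(\alpha - s) f(s)$ is handled by the relation above after one more application of the $S$-resolvent equation.

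For the inductive step, suppose the claim holds for all intrinsic polynomials of degree $< m$ and let $P$ have degree $m$. Factor out one real linear factor or, when the roots are genuinely quaternionic, one real quadratic factor $Q(s) = s^2 - 2\Re(q)s + |q|^2$ with $q\notin\sigma_S(T)$; such a factorization into real linear and real irreducible quadratic factors exists because $P$ is intrinsic, hence has real coefficients. Write $P = P_1 Q_1$ with $\deg P_1 < m$ and $Q_1$ of degree $1$ or $2$ with real coefficients. By the induction hypothesis applied to $Q_1$ (a real polynomial, hence intrinsic) we get $f(T)V\subseteq\dom(T^{\deg Q_1})$ and $Q_1[T]f(T)u = (Q_1 f)(T)u$; since $Q_1 f\in\mathbf{V}(T)$ (as $Q_1$ is intrinsic, by Lemma~\ref{ConProd} and an argument as in Theorem~\ref{p.642Lemma2} showing multiplication by a polynomial preserves $\mathbf{S}(T)$ via the measures $\mu^n$), we may apply the induction hypothesis again to $P_1$ and the function $Q_1 f$, obtaining $(Q_1 f)(T)V\subseteq\dom(T^{\deg P_1})$ and $P_1[T](Q_1 f)(T)u = (P_1 Q_1 f)(T)u = (Pf)(T)u$. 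Chaining these, $f(T)u\in\dom(T^{\deg Q_1})$, $Q_1[T]f(T)u = (Q_1 f)(T)u\in\dom(T^{\deg P_1})$, hence $f(T)u\in\dom(T^m)$, and
\[
P[T]f(T)u = P_1[T]Q_1[T]f(T)u = P_1[T](Q_1 f)(T)u = (Pf)(T)u,
\]
using multiplicativity of the $S$-functional calculus for polynomials ($P_1[T]Q_1[T] = (P_1Q_1)[T]$ when one factor is intrinsic).

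\textbf{Main obstacle.} The delicate point is the base case: showing $f(T)V\subseteq\dom(T)$ with $Tf(T)u = (\partial_s f)(T)u$, i.e.\ that one may differentiate the operator integral. The clean way is the $S$-resolvent detour sketched above — multiply by $S_R^{-1}(\alpha,T)$ for real $\alpha\notin[-\omega,\omega]$, invoke Lemma~\ref{PFCLT}(ii) (which requires the multiplying function to be intrinsic, exactly the reason the theorem restricts to intrinsic $P$), and then undo the multiplication using that $(\alpha\id-T)^{-1}$ is a genuine inverse. One must also verify carefully that multiplication of an element of $\mathbf{V}(T)$ by an intrinsic polynomial stays in $\mathbf{V}(T)$: this follows from Theorem~\ref{p.642Lemma2}(ii), since multiplying $f(s)$ by $s$ corresponds (up to sign) to the measure $\mu^1$, and iterating handles higher powers, while real linear combinations are covered by Lemma~\ref{ConProd}. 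Keeping track of which intermediate products lie in $\mathbf{V}(T)$ — so that $(Q_1 f)(T)$, $(P_1 Q_1 f)(T)$ etc.\ are all defined by the calculus of this paper — is the main bookkeeping burden, but each step is routine given the lemmas already established.
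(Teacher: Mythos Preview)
Your inductive strategy is natural, but it has a genuine gap at exactly the point you flag as ``bookkeeping.'' You claim that multiplication of an element of $\mathbf{V}(T)$ by an intrinsic polynomial stays in $\mathbf{V}(T)$, and you justify this by Theorem~\ref{p.642Lemma2}(ii). That theorem, however, says that the \emph{slice derivative} $\partial_s^n f$ corresponds to the measure $\mu^n(E)=\int_E(-t)^n\,d\mu$, not that $s^n f$ does. Multiplication by $s$ corresponds on the measure side to distributional differentiation in $t$, not to multiplication by $-t$. In particular, $f\equiv 1$ lies in $\mathbf{V}(T)$ (take $\mu=\delta_0$), while $sf(s)=s$ does not, since every element of $\mathbf{V}(T)$ is bounded on each vertical line $\Re(s)=c$ with $|c|<\omega+\varepsilon$. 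This is precisely why the lemma carries the extra hypothesis that $Pf\in\mathbf{V}(T)$: it is \emph{not} automatic.

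Consequently your inductive step breaks down: writing $P=P_1Q_1$ and invoking the induction hypothesis on $Q_1$ and $f$ requires $Q_1f\in\mathbf{V}(T)$, which you cannot deduce from $f,Pf\in\mathbf{V}(T)$ alone. The paper circumvents this entirely by working with the integral representation of Proposition~\ref{propdue}: one writes $(\alpha\id-T)^{-m}(Pf)(T)u$ as a contour integral, expands $P(s)=\sum_{k=0}^m a_k(\alpha-s)^k$ under the integral sign, and recognizes each resulting integral as $a_k(\alpha\id-T)^{-m+k}f(T)u$ --- here only $f\in\mathbf{V}(T)$ is needed, because the factors $(\alpha-s)^{k-m}$ have nonpositive exponent and are handled by Lemma~\ref{PFCLT} and Remark~\ref{SRCompatible}. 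One then passes from $u\in\dom(T^{m+2})$ to general $u$ by density (Lemma~\ref{TnDense}) and closedness of $P[T]$. A smaller issue: in your base case you write $(g_\alpha f)(T)=g_\alpha(T)f(T)$, but Lemma~\ref{PFCLT}(ii) places the intrinsic factor on the \emph{right}; the correct identity is $(fg_\alpha)(T)=f(T)g_\alpha(T)$, and the argument has to be arranged accordingly.
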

\begin{proof}
We first consider the case $x\in\dom (T^{m+2})$. Let $\alpha,c\in\R$ with $w<c<|\alpha|$ and let $I\in\S$. The function $Pf$ is the product of two intrinsic functions and therefore intrinsic itself. By \textcolor{red}{\bf} Proposition~\ref{propdue}, Lemma~\ref{PFCLT} and Remark~\ref{SRCompatible}, we have
\begin{equation*}
(\alpha\id  - T)^{-m}(Pf)(T) u = \frac{1}{2\pi}\int_{\partial(W_c\cap\C_I)}(\alpha-s)^{-m}P(s)f(s)(\alpha-s)^{-2}\,ds_I\,S_R^{-1}(s,T)(\alpha\id  - T)^2 u.
\end{equation*}
Write the polynomial $P$ in the form $P(s) = \sum_{k=0}^m a_k (\alpha-s)^k$  with $a_k\in\R$. In view of Proposition~\ref{propdue}, Lemma~\ref{PFCLT} and Remark~\ref{SRCompatible} we obtain again
\begin{gather*}
(\alpha\id  - T)^{-m}(Pf)(T) u \\
 = \sum_{k=0}^ma_k\frac{1}{2\pi}\int_{\partial(W_c\cap\C_I)}(\alpha-s)^{-m+k}f(s)(\alpha-s)^{-2}\,ds_I\,S_R^{-1}(s,T)(\alpha\id  - T)^2 u\\
 = \sum_{k=0}^ma_k(\alpha\id -T)^{-m+k}f(T)u= (\alpha\id -T)^{-m} \sum_{k=0}^ma_k(\alpha\id -T)^{k}f(T)u\\
 = (\alpha\id -T)^{-m}P[T]f(T)u.
\end{gather*}
Consequently, $(Pf)(T)u = P[T]f(T)u$ for $u\in\dom (T^{m+2})$.

Now let $u\in V$ be arbitrary. Since $\dom (T^{m+2})$ is dense Lemma~\ref{TnDense}, there exists a sequence $u_n\in\dom (T^{m+2})$ with $\lim_{n\to\infty}u_n  =u$. Then $f(T)u_n \to f(T) u$ and $P[T]f(T)u_n = (Pf)(T)u_n \to (Pf)(T)u$ as $n\to\infty$. Since $P[T]$ is closed on $\dom (T^m)$, it follows that $f(T)u\in\dom (T^m)$ and $P[T]f(T)u = (Pf)(T)u$.

\end{proof}

\begin{definition}{\rm
A sequence of intrinsic polynomials  $(P_n)_{n\in\nn}$ is called an inverting sequence for an intrinsic function $f\in  \mathbf{V}(T)$ if
\begin{enumerate}[(i)]
\item $P_nf\in \mathbf{V}(T)$,

\item $|P_n(s)f(s)|\leq M,$ $n\in\nn$ for some constant $M>0$ and $\lim_{n\to\infty}P_n(s)f(s)=1$ in a strip $|\Re(s)|\leq \omega+\varepsilon$,

\item $\|(P_nf)(T)\|\leq M$, $n\in \mathbb{N}$ for some constant $M>0$.
\end{enumerate}}
\end{definition}

\begin{theorem}\label{InvID}
If $(P_n)_{n\in\nn}$ is an inverting sequence for an intrinsic function $f\in\mathbf{V}(T)$, then
\[\lim_{n\to\infty}P_n[T]f(T) u = u \qquad\forall u\in V.\]
\end{theorem}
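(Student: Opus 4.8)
The plan is to combine Lemma~\ref{PolyLemma} with a density-plus-uniform-bound argument, exactly as one does for the classical statement. First I would observe that, since $f$ is intrinsic and $P_n f \in \mathbf{V}(T)$ by hypothesis (i), Lemma~\ref{PolyLemma} applies to each $P_n$ and gives $P_n[T]f(T)u = (P_n f)(T)u$ for \emph{all} $u\in V$. Thus it suffices to prove that $(P_n f)(T)u \to u$ for every $u\in V$. Write $f = \lap(\mu)$ and $P_n f = \lap(\mu_n)$ with $\mu,\mu_n\in\mathbf{S}(T)$, so that $(P_n f)(T)u = \int_{\rr} d\mu_n(t)\,\Z_T(-t)u$. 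The uniform bound (iii) says precisely $\sup_n\|(P_nf)(T)\| \le M$, so the family $\{(P_n f)(T)\}_n$ is uniformly bounded in $\boundOP(V)$. Hence, by the standard $\varepsilon/3$ argument, it is enough to verify the convergence $(P_n f)(T)u\to u$ on a dense subset of $V$.

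For the dense subset I would take $\dom(T^2)$, which is dense in $V$ by Lemma~\ref{TnDense} (here $\rho_S(T)\cap\rr\neq\emptyset$ because the strip $-\omega<\Re(s)<\omega$ lies in $\rho_S(T)$, and in fact all real points outside $[-\omega,\omega]$ do too). On $\dom(T^2)$ I can use the integral representation from Proposition~\ref{propdue}: fixing real $\alpha,c$ with $\omega<c<|\alpha|$ and $I\in\S$,
\[
(P_n f)(T)u=\frac{1}{2\pi}\int_{\partial(W_c\cap\C_I)}P_n(s)f(s)(\alpha-s)^{-2}\,ds_I\,S_R^{-1}(s,T)(\alpha\id-T)^2 u .
\]
By hypothesis (ii) the integrands are dominated, uniformly in $n$, by $M\,|\alpha-s|^{-2}\,\|S_R^{-1}(s,T)\|\,\|(\alpha\id-T)^2u\|$, which is integrable on $\partial(W_c\cap\C_I)$ since $S_R^{-1}(s,T)$ is uniformly bounded there by \eqref{SRUniBound} and $|\alpha-s|^{-2}$ decays like $1/|s|^2$; moreover $P_n(s)f(s)\to 1$ pointwise on the contour. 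Lebesgue's dominated convergence theorem then gives
\[
\lim_{n\to\infty}(P_n f)(T)u=\frac{1}{2\pi}\int_{\partial(W_c\cap\C_I)}(\alpha-s)^{-2}\,ds_I\,S_R^{-1}(s,T)(\alpha\id-T)^2 u .
\]
It remains to identify this limit with $u$; but applying Proposition~\ref{propuno} at $t=0$ (recall $\Z_T(0)=\id$) shows precisely that the right-hand side equals $\Z_T(0)u=u$. Hence $(P_n f)(T)u\to u$ on $\dom(T^2)$.

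Combining the three ingredients — Lemma~\ref{PolyLemma} to replace $P_n[T]f(T)$ by $(P_nf)(T)$, the uniform bound (iii) to reduce to a dense set, and dominated convergence on the contour integral of Proposition~\ref{propdue} together with the $t=0$ case of Proposition~\ref{propuno} to evaluate the limit on $\dom(T^2)$ — yields $\lim_{n\to\infty}P_n[T]f(T)u = u$ for all $u\in V$. The only point requiring genuine care, rather than bookkeeping, is making the dominated-convergence step on the unbounded contour $\partial(W_c\cap\C_I)$ rigorous: one must check that the domination by $M|\alpha-s|^{-2}\|S_R^{-1}(s,T)\|$ is honestly uniform in $n$ (which is exactly what (ii) provides) and that this bound is integrable along the two vertical lines, using the uniform boundedness of the $S$-resolvent from \eqref{SRUniBound}. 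Everything else is a direct assembly of results already established in the paper.
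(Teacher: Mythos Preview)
Your proposal is correct and follows essentially the same route as the paper: reduce to $(P_nf)(T)$ via Lemma~\ref{PolyLemma}, use Proposition~\ref{propdue} on $\dom(T^2)$, pass to the limit by dominated convergence, identify the limit as $u$ (the paper does not invoke Proposition~\ref{propuno} explicitly for this, but the computation is the same), and then extend to all of $V$ by density and the uniform bound~(iii). The one point the paper handles more carefully than you do is the choice of the contour: a priori each $P_nf\in\mathbf V(T)$ is slice regular only on a strip $\overline{W_{c_n}}$ with $c_n$ depending on $n$, so the paper first applies Proposition~\ref{propdue} with these $c_n$ and then uses Cauchy's theorem to move all integrals to a common $\partial(W_c\cap\C_I)$ with $\omega<c<\min\{|\alpha|,\omega+\varepsilon\}$; you should state explicitly that your fixed $c$ satisfies $c<\omega+\varepsilon$ so that both Proposition~\ref{propdue} applies uniformly and hypothesis~(ii) is available on the contour. (Also, your parenthetical justification that ``the strip $-\omega<\Re(s)<\omega$ lies in $\rho_S(T)$'' is backwards---the spectrum lies in the strip---though your subsequent clause gives the correct reason that $\rho_S(T)\cap\rr\neq\emptyset$.)
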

\begin{proof}
First consider $u\in\dom (T^2)$ and choose $\alpha\in\rr$ with $\omega<|\alpha |$. Then Proposition~\ref{propdue} and  Lemma~\ref{PolyLemma} imply
\[P_n[T]f(T)u = (P_nf)(T)u = \frac{1}{2\pi}\int_{\partial(W_{c_n}\cap\C_I)}P_n(s)f(s)(\alpha-s)^{-2}\,ds_I\, S_R^{-1}(s,T) (\alpha\id -T)^2 u\]
for arbitrary $I\in\S$ and $c_n\in\R$ with $w < c_n < |\alpha|$ such that $P_nf$ is right slice regular on $\overline{W_{c_n}}$.
However, we have assumed that there exists a constant $M$ such that $|P_n(s)f(s)|\leq M$ for any $n\in\nn$  on a strip $-(\omega+\varepsilon)\leq\Re(s) \leq \omega+\varepsilon$. Moreover, because of \eqref{SRUniBound}, the right $S$-resolvent is uniformly bounded on any set $\{s\in\C_I: |\Re(s)|>\omega+\varepsilon'\}$ with $\varepsilon'>0$. Applying Cauchy's integral theorem we can therefore replace $\partial(W_{c_n}\cap\C_I)$ for any $n\in\nn$ by $\partial(W_c\cap\C_I)$ where $c$ is a real number with $\omega < c < \min\{|\alpha|, \omega + \varepsilon\}$. In particular, we can choose $c$ independent of~$n$. Lebesgue's dominated convergence theorem allows us to exchange limit and integration and we obtain
\[ P_n[T]f(T) u = \frac{1}{2\pi}\int_{\partial(W_c\cap\C_I)}(\alpha-s)^{-2}\,ds_I\,S_R^{-1}(s,T) (\alpha\id -T)^2 u = u.\]

If $u\in V$ does not belong to $\dom (T^2)$, then we can choose for any $\varepsilon > 0$ a vector $u_{\varepsilon}\in \dom (T^2)$ with $\|u - u_{\varepsilon}\| < \varepsilon$. Since the mappings $(P_nf)(T)$ are uniformly bounded by a constant $M>0$, we get
\begin{gather*}
\| (P_nf)(T) u - u\| \leq  \| (P_nf)(T) u - (P_nf)(T) u_{\varepsilon}\| + \|(P_nf)(T) u_{\varepsilon} - u_{\varepsilon}\| + \| u_{\varepsilon} - u\| \\
\leq M \| u - u_{\varepsilon}\| + \|(P_nf)(T) u_{\varepsilon} - u_{\varepsilon}\| + \| u_{\varepsilon} - u\| \overset{n\to\infty}{\longrightarrow} M \| u - u_{\varepsilon}\|   + \| u_{\varepsilon} - u\| \leq (M+1)\varepsilon.
\end{gather*}
Since $\varepsilon >0$ was arbitrary, we deduce $\lim_{n\to\infty}\| (P_nf)(T) u - u\| = 0$ even for arbitrary $u\in V$.

\end{proof}

\begin{corol}
Let $V$ be reflexive and let $P_n$ be an inverting sequence for an intrinsic function $f\in\mathbf{S}(T)$. A vector $u$ belongs to the range of $f(T)$ if and only if $x$ is in $\dom (P_n[T])$ for all $n\in\nn$ and the sequence $(P_n[T]u)_{n\in\nn}$ is bounded.
\end{corol}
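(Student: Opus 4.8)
The plan is to prove both implications of the Corollary using Theorem~\ref{InvID} together with standard functional-analytic tools (boundedness of $(P_nf)(T)$, closedness of $P_n[T]$, and reflexivity).

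\textbf{The ``only if'' direction.} Suppose $u = f(T)w$ for some $w\in V$. Lemma~\ref{PolyLemma} applied to each intrinsic polynomial $P_n$ gives $f(T)V\subseteq\dom(T^{m_n})$, where $m_n = \deg P_n$, hence $u\in\dom(T^{m_n}) = \dom(P_n[T])$ for every $n$; moreover the same lemma yields $P_n[T]u = P_n[T]f(T)w = (P_nf)(T)w$. Since $(P_n)_{n\in\nn}$ is an inverting sequence, condition (iii) of the definition gives $\|(P_nf)(T)\|\leq M$, so $\|P_n[T]u\| = \|(P_nf)(T)w\|\leq M\|w\|$ for all $n$; thus $(P_n[T]u)_{n\in\nn}$ is bounded. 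This direction does not use reflexivity.

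\textbf{The ``if'' direction.} Suppose $u\in\dom(P_n[T])$ for all $n$ and $\|P_n[T]u\|\leq C$ for all $n\in\nn$. Set $w_n := P_n[T]u$. By reflexivity of $V$, the bounded sequence $(w_n)_{n\in\nn}$ has a subsequence $(w_{n_k})_{k\in\nn}$ converging weakly to some $w\in V$. I would then show $f(T)w = u$ by testing against continuous right linear functionals: for $u^\ast\in V^\ast$,
\[
\langle u^\ast, f(T)w\rangle = \lim_{k\to\infty}\langle u^\ast, f(T)w_{n_k}\rangle = \lim_{k\to\infty}\langle u^\ast, f(T)P_{n_k}[T]u\rangle.
\]
The middle step uses that $f(T)$ is bounded, hence weakly continuous (it intertwines weak convergence after composing with $u^\ast$). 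Now the key point is that $f(T)$ and $P_{n_k}[T]$ essentially commute in the relevant sense: since $P_{n_k}f$ is intrinsic and lies in $\mathbf{V}(T)$, one has $f(T)P_{n_k}[T]u = (P_{n_k}f)(T)u = P_{n_k}[T]f(T)u$ wherever these make sense --- this requires $u\in\dom(P_{n_k}[T])$, which is exactly our hypothesis, and a commutation argument paralleling Lemma~\ref{PolyLemma}. Then $\langle u^\ast, f(T)P_{n_k}[T]u\rangle = \langle u^\ast, (P_{n_k}f)(T)u\rangle \to \langle u^\ast, u\rangle$ by Theorem~\ref{InvID}. Since $u^\ast$ was arbitrary, $f(T)w = u$, so $u\in\ran(f(T))$.

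\textbf{Main obstacle.} The delicate point is the commutation identity $f(T)P_n[T]u = (P_nf)(T)u$ for a general $u\in\dom(P_n[T])$ (not merely $u\in\dom(T^{m+2})$ or $u\in\ran(f(T))$). Lemma~\ref{PolyLemma} only directly provides $P_n[T]f(T)v = (P_nf)(T)v$; to get $f(T)$ on the \emph{outside} acting on $P_n[T]u$ I would argue by approximation: choose $v_j\in\dom(T^{m+2})$ with $v_j\to u$ and $P_n[T]v_j\to P_n[T]u$ (possible since $P_n[T]$ is closed and $\dom(T^{m+2})$ is a core, by Lemma~\ref{TnDense} and closedness), apply the clean identity on $\dom(T^{m+2})$, and pass to the limit using boundedness of $f(T)$ and $(P_nf)(T)$. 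The reflexivity hypothesis is essential precisely to extract the weakly convergent subsequence $(w_{n_k})$; without it one only knows $(P_n[T]u)$ is bounded, which need not suffice to identify a preimage.
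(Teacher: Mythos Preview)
Your overall architecture matches the paper's proof exactly: the ``only if'' direction via Lemma~\ref{PolyLemma} and the uniform bound on $(P_nf)(T)$, and the ``if'' direction via weak sequential compactness in the reflexive space followed by testing $f(T)w=u$ against functionals and invoking Theorem~\ref{InvID}. The one substantive divergence is the commutation step $f(T)P_{n}[T]u=P_{n}[T]f(T)u$.

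The paper handles this directly and avoids approximation altogether: since $f$ is intrinsic, the representing measure $\mu$ is \emph{real-valued}, so $d\mu(t)$ commutes with the operator $P_{n}[T]$; and since $P_{n}$ has real coefficients, $P_{n}[T]\Z_T(-t)u=\Z_T(-t)P_{n}[T]u$ for $u\in\dom(T^{m})$. One then pulls the closed operator $P_{n}[T]$ through the Bochner integral defining $f(T)$ to obtain
\[
f(T)P_{n}[T]u=\int_{\rr}d\mu(t)\,\Z_T(-t)P_{n}[T]u=P_{n}[T]\int_{\rr}d\mu(t)\,\Z_T(-t)u=P_{n}[T]f(T)u.
\]
This works for every $u\in\dom(P_{n}[T])$ with no core argument needed.

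Your approximation route, by contrast, has a gap as written. You invoke ``$\dom(T^{m+2})$ is a core, by Lemma~\ref{TnDense} and closedness,'' but Lemma~\ref{TnDense} only gives density of $\dom(T^{m+2})$ in $V$, not density in $\dom(T^{m})$ for the graph norm of $T^{m}$; closedness of $P_{n}[T]$ does not bridge that gap. The core property \emph{is} true for generators of $C_0$-groups (via mollification with the group), but it is an additional nontrivial ingredient not supplied by the paper's lemmas. Moreover, even on $\dom(T^{m+2})$ you still need to justify $f(T)P_{n}[T]v=(P_{n}f)(T)v$, which is not the identity Lemma~\ref{PolyLemma} provides (that lemma gives $P_{n}[T]f(T)v=(P_{n}f)(T)v$); so you are back to needing the same commutation argument on the dense subspace. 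The paper's use of the real-valued measure is the cleaner and self-contained way through.
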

\begin{proof}
If $u\in\ran f(T)$ with $u = f(T)v$ then Lemma~\ref{PolyLemma} implies $u\in\dom( P_n(T))$ for all $n\in\nn$.  Theorem~\ref{InvID} states $\lim_{n\to\infty}P_n[T]u = v$ which implies that $(P_n[T]u)_{n\in\nn}$  is bounded.

To prove the converse statement consider $u \in V$ such that $(P_n[T]u)_{n\in\nn}$ is bounded. Since $V$ is reflexive the set $\{P_n[T]u : n\in\nn\}$ is weakly sequentially compact (the proof that a set $E$ in a reflexive quaternionic Banach space $V$ is weakly sequentially compact if and only if $E$ is bounded can be completed just as in the classical case when $V$ is a complex Banach space, see, e.g., Theorem II.28 in \cite{ds}) and hence there exists a subsequence $(P_{n_k}[T]u)_{k\in\nn}$ and a vector $v\in V$ such that  $\langle x^*, P_{n_k}[T]u\rangle \to \langle x^*, v\rangle$ as $k\to\infty$ for any $x^*\in V^*$. We show $u = f(T)v$.

For any functional $x^*\in V^*$ the mapping $x^*f(T)$ defined by $\langle x^*f(T), w\rangle = \langle x^*,f(T)w\rangle$ also belongs to $V^*$. Hence,
\[ \langle x^*,f(T) P_{n_k}[T]u\rangle = \langle x^*f(T), P_{n_k}[T]u\rangle \to \langle x^*f(T), v\rangle = \langle x^*,f(T)v\rangle.\]
Recall that the measure $\mu$ is real-valued since $f$ is intrinsic. Therefore it commutes with the operator $P_{n_k}[T]$. Recall also that
 if $w\in\dom (T^n)$ for some $n\in\nn$  then $\Z(t)w\in \dom (T^n)$ for any $t\in\R$ and $\Z(t)T^nw = T^n\Z(t)w$. Thus, $P_{n_k}[T]\Z(t) u = \Z(t)P_{n_k}[T] u$ because $P_{n_k}$ has real coefficients. Moreover, we can therefore exchange the integral with the unbounded operator $P_{n_k}[T]$ in the following computation
\begin{align*}
f(T)P_{n_k}[T]u = \int_{\rr} d\mu(t)\, \Z(-t) P_{n_k}[T]u = P_{n_k}[T]\int_{\rr} d\mu(t)\, \Z(-t) u = P_{n_k}[T]f(T)u.
\end{align*}
Theorem~\ref{InvID} implies for any $x^*\in V^*$
\[ \langle x^*,u\rangle  = \lim_{k\to\infty} \langle x^*, P_{n_k}[T]f(T)u\rangle = \lim_{k\to\infty} \langle x^*, f(T) P_{n_k}[T]u\rangle = \langle x^*, f(T) v\rangle\]
and so $u = f(T)v$ follows from the quaternionic version of the Hahn-Banach Theorem (see for example Theorem 4.10.1 in \cite{MR2752913}).

\end{proof}

\end{document}